\documentclass[11pt]{amsart}
\usepackage{latexsym, amsmath, amssymb,amsthm,amsopn,amsfonts,mathrsfs}
\usepackage{stmaryrd}
\usepackage{version}
\usepackage{epsfig,graphics,color,graphicx,graphpap,dsfont,eufrak}
\usepackage{amssymb}

\usepackage[makeroom]{cancel}
\allowdisplaybreaks

\ifx\pdfoutput\undefined
  \DeclareGraphicsExtensions{.eps}
\else
  \ifx\pdfoutput\relax
    \DeclareGraphicsExtensions{.eps}
  \else
    \ifnum\pdfoutput>0
      \DeclareGraphicsExtensions{.pdf}
    \else
      \DeclareGraphicsExtensions{.eps}
    \fi
  \fi
\fi

\setlength{\textheight}{8in} \setlength{\oddsidemargin}{0.0in}
\setlength{\evensidemargin}{0.0in} \setlength{\textwidth}{6.4in}
\setlength{\topmargin}{0.18in} \setlength{\headheight}{0.18in}
\setlength{\marginparwidth}{1.0in}
\setlength{\abovedisplayskip}{0.2in}

\setlength{\belowdisplayskip}{0.2in}

\setlength{\parskip}{0.05in}

\pagestyle{headings}

\newcommand{\supp}{\operatorname{supp}}

\newcommand{\LB}{\left[}
\newcommand{\RB}{\right]}

\newcommand{\vertiii}[1]{{\left\vert\kern-0.25ex\left\vert\kern-0.25ex\left\vert #1 
    \right\vert\kern-0.25ex\right\vert\kern-0.25ex\right\vert}}

\newcommand{\be}{\begin{equation}}
\newcommand{\ee}{\end{equation}}

\newcommand{\bse}{\begin{subequations}}
\newcommand{\ese}{\end{subequations}}

\newcommand{\jump}[1]{\left\llbracket{#1}\right\rrbracket}

\newcommand{\realpart}[1]{\operatorname{Re}{#1}}

 \newcommand{\sgn}{\operatorname{sgn}}

\theoremstyle{plain}

\newtheorem{thm}{Theorem}[section]
\newtheorem{prop}{Proposition}[section]

\newtheorem{lemma}[prop]{Lemma}

\theoremstyle{definition}

\newtheorem{defn}[prop]{Definition}
\newtheorem{problem}[prop]{Problem}
\newtheorem{remark}{Remark}[section]

\numberwithin{equation}{section}

\def\squarebox#1{\hbox to #1{\hfill\vbox to #1{\vfill}}}

\usepackage{amsxtra}

\DeclareFontFamily{OT1}{pzc}{}
\DeclareFontShape{OT1}{pzc}{m}{it}{<-> s * [1.10] pzcmi7t}{}
\DeclareMathAlphabet{\mathpzc}{OT1}{pzc}{m}{it}

\def\ba{\begin{array}}
\def\ea{\end{array}}
\def\bea{\begin{eqnarray}}
\def\eea{\end{eqnarray}}
\def\beas{\begin{eqnarray*}}
\def\eeas{\end{eqnarray*}}
\def\bi{\begin{itemize}}
\def\ei{\end{itemize}}








\def\({\textnormal{(}}
\def\){\textnormal{)}}






\def\b1{{\bf 1}}


\usepackage{hyperref}

%


\title[Continuous dependence on density]{Continuous dependence on the density for stratified steady water waves}

\author[R.M. Chen]
{Robin Ming Chen}
\address{Robin Ming Chen\newline
Department of Mathematics\\
University of Pittsburgh\\
Pittsburgh, PA 15260} \email{mingchen@pitt.edu}
\author[S. Walsh]
{Samuel Walsh}
\address{Samuel Walsh \newline
Department of Mathematics \\ 
University of Missouri\\
Columbia, MO 65211}
\email{walshsa@missouri.edu}

\thanks{The work of R.M. Chen was partially supported by the NSF grant DMS-0908663.}

\begin{document}

\begin{abstract}
There are two distinct regimes commonly used to model traveling waves in stratified water:  {continuous} stratification, where the density is smooth throughout the fluid, and layer-wise continuous stratification, where the fluid consists of multiple immiscible strata.   The former is the more physically accurate description, but the latter is frequently more amenable to analysis and computation.   By the conservation of mass, the density is constant along the streamlines of the flow; the stratification can therefore be specified by prescribing the value of the density on each streamline.  We call this the streamline density function.  

Our main result states that, for every smoothly stratified periodic traveling wave in a certain small-amplitude regime, there is an $L^\infty$ neighborhood of its streamline density function such that, for any piecewise smooth streamline density function in that neighborhood, there is a corresponding traveling wave solution.  Moreover, the mapping from streamline density function to wave is Lipschitz continuous in a certain function space framework.  As this neighborhood includes piecewise smooth densities with arbitrarily many jump discontinues, this theorem provides a rigorous justification for the ubiquitous practice of approximating a smoothly stratified wave by a layered one.  We also discuss some applications of this result to the study of the qualitative features of such waves.   \end{abstract}
\maketitle

\section{Introduction} \label{intro section}

We are interested in studying two-dimensional traveling periodic water waves with heterogeneous density.  These are waves of permanent configuration that evolve simply by translating with a constant velocity.    Shifting to a moving reference eliminates time dependence from the system.  The wave can then be said to inhabit a steady fluid region $\Omega \subset \mathbb{R}^2$.   Throughout this work, we assume that $\Omega$ lies above a flat impermeable ocean bed, and below the graph of an \emph{a priori} unknown surface profile $\eta$:
$$ \Omega = \{ (x,y) \in \mathbb{R}^2 : -d < y < \eta(x) \}.$$
Here the axes are fixed so that the wave propagates in the positive $x$-direction with speed $c > 0$, and  the ocean depth is $d > 0$.   The flow is described mathematically by a velocity field $(u,v) : \Omega \to \mathbb{R}^2$, a pressure $P: \Omega \to \mathbb{R}$, and a density $\varrho: \Omega \to \mathbb{R}_+$.    Periodicity of the wave means that $u$, $v$, $P$, $\varrho$, and $\eta$ are $2L$-periodic in $x$.   For $(u,v, \varrho, P, \eta)$ to represent a water wave, they must satisfy the free boundary steady Euler equations (see \S\ref{intro formulation section}).  

Density stratification is an important feature of waves in the ocean, with many dynamical implications.   It  arises from salinity, temperature gradients due to heating from the sun, or the presence of pollutants, for example.  Ocean waves typically have large regions of nearly constant density separated by thin transition layers, the \emph{pycnoclines}, where the density may vary sharply.  For this reason, it is a very common practice to imagine these waves as consisting of two or more immiscible layers.  The density in each layer is assumed to be smooth --- often just constant --- and a jump discontinuity is permitted over the interfaces.    Doing so effectively collapses the pycnoclines to material lines.

With that in mind, we identify two distinct regimes.  A wave is said to be \emph{continuously stratified} provided that $\varrho$ is continuous throughout the entire fluid domain $\Omega$.  On the other hand, we say that $\varrho$ is \emph{layer-wise smooth} if $\Omega$ can be partitioned into finitely many immiscible fluid regions 
$$ \overline{\Omega} = \bigcup_{i=1}^N \overline{\Omega_i},$$
where each $\Omega_i \subset \Omega$ is an open set with smooth boundary, and the restriction $\varrho|_{\Omega_i}$ is smooth (the precise regularity of both $\partial \Omega_i$ and $\varrho$ will be specified shortly).    

The continuously stratified case is arguably more physically accurate, but the layered model can be an extremely convenient idealization in certain situations.  This is especially true when $\varrho$ is layer-wise constant, as it allows one to assume that the velocity field is irrotational in each fluid region (this is generally impossible with heterogeneous density).  Irrotational waves are considerably simpler to study, both analytically and computationally.   Indeed, our current understanding of the qualitative properties of steady waves with vorticity is comparatively quite primitive.  

The central objective of this work is to quantify the degree to which a continuously stratified water wave can be approximated by a merely layer-wise smooth wave.  We show that, in a certain small-amplitude regime, the wave depends \emph{continuously} on the stratification.  That is, if one fixes a continuously stratified wave of this type, there exists nearby many-layered traveling waves that converge to the smooth wave as the number of layers is taken to infinity.  In fact, these layer-wise smooth waves are parameterized by the density in a Lipschitz continuous fashion. 

This serves as a  rigorous justification for the layered model, albeit in a specific physical regime.  Furthermore, it provides a promising new avenue for studying a variety of qualitative features of continuously stratified waves.  One specific application, which we pursue in an accompanying paper, is the problem of surface reconstruction from pressure data on the ocean bed; see the discussion in \S\ref{intro statement of results section}. 

\subsection{Eulerian formulation of the problem} \label{intro formulation section}
Now that we have established the overarching goal of the paper, let us formulate things more carefully.  Suppose that we have a layer-wise smooth density (a continuously stratified density we view as the special case where there is a single fluid layer).  We say that $(u,v, \varrho, P, \eta)$ represents a steady water wave provided it satisfies the steady Euler equations that we now detail.  For reasons that will become clear, we work in the weak setting where everything should be interpreted in the distributional sense.  

First, in each layer, we require that the velocity field be divergence free 
\bse  \label{weakeuler} \be u_x + v_y =  0, \qquad \textrm{in } \Omega_i. \label{weakvolume}  \ee
In fluid mechanics, this is referred to as \emph{incompressibility}; it is typical of flows in the ocean.  We also assume that the density of each fluid particle is invariant under the flow, and that momentum is conserved.  The weak formulation of these statements amounts to the following:
\begin{align}
((u-c) \varrho )_x + (v \varrho)_y & =  0, \qquad \textrm{in } \Omega_i, \label{weakmass} \\
-c(\varrho u)_x + (\varrho u^2)_x + (\varrho uv)_y  & =  -P_x , \qquad \textrm{in } \Omega_i, \label{weakmomentumx} \\
-c(\varrho v)_x + (\varrho uv)_x + (\varrho v^2)_y & =  -P_y - g \varrho, \qquad \textrm{in } \Omega_i. \label{weakmomentumy} \end{align}  \ese

We assume that the density is strictly positive,
\be \varrho > 0, \qquad \textrm{in } \overline{\Omega},\label{varrhopositive}\ee
and that the fluid is \emph{stably stratified}:
\be y \mapsto \varrho(\cdot,y) \textrm{ is non-increasing.} \label{stablestratificationeuler} \ee
This simply says that, as one expects, the density increases with depth.

The kinematic and dynamic boundary conditions are
\bse \label{weakeulerboundary} \begin{align} 
v & =  (u-c) \eta_x, \qquad \textrm{on } \{y = \eta(x)\}, \label{weakkinematicsurface} \\
v & =  0, \qquad \textrm{on } \{y = -d\}, \label{weakkinematicbed} \\
P & =  \displaystyle P_{\textrm{atm}}, \qquad \textrm{on } \{y = \eta(x)\}. \label{weakdynamic} \end{align} 
Notice that \eqref{weakkinematicsurface}  simply states that the air--sea interface is a material line:  at each point  {$(x,\eta(x))$}, the normal velocity of the interface matches the normal velocity of the water.    $P_{\textrm{atm}}$ is the atmospheric pressure, which we take to be a given constant.  Then the dynamic condition \eqref{weakdynamic} enforces the continuity of the pressure across the air--sea interface.  

Analogous conditions are imposed on the interfaces between interior layers.  For simplicity we assume that each layer has graph geometry.  Thus, 
\be \Omega_i := \{ (x, y) \in \Omega : \eta_{i-1}(x) < y < \eta_i(x) \}, \qquad i = 1, \ldots, N, \label{def Omega_i} \ee
 for some functions $\eta_0, \ldots, \eta_N$, with $\eta_0 := -d$, $\eta_N := \eta$.  Implicit here is the convention that $\Omega_i$ lies beneath $\Omega_{i+1}$, for $i = 1, \ldots N-1$.  In particular, $\Omega_1$ is the layer directly above the ocean bed, while $\Omega_N$ lies right below the air--sea interface.   As in \eqref{weakkinematicsurface}, we require that 
\be v  =  (u-c) \partial_x \eta_i, \qquad \textrm{on } \{y = \eta_i(x)\}. \label{weakkinematicinterface} \ee \ese
This is equivalent to the immiscibility of the layers.  Similarly, we mandate that
\be P \textrm{ is continuous in } \overline{\Omega}. \label{pressure continuous} \ee

Finally, we make the important assumption that there is no horizontal stagnation in the flow:
\be u-c < 0 \qquad \textrm{in } \overline{\Omega} \label{nostagnation}. \ee
As can be seen above, points where $u = c$ lead to degeneracy in the governing equations.  One consequence of the absence of stagnation points is that the streamlines for the flow cannot be closed.  We exploit this fact later when we employ the Dubreil--Jacotin transformation in \S\ref{height eq formulation section}.

In total, we arrive at the following descriptions for the Euler problem.  Fix $\alpha \in (0,1)$ and put $r := 2/(1-\alpha)$.  

\begin{problem}[Steady weak Euler problem] \label{weak Euler prob}  Find velocity field $(u,v)$, density $\varrho$, pressure $P$, and interfaces $\eta$, $\eta_1, \ldots, \eta_{N-1}$ with the following regularity
\begin{align*}
u,\, v, \, \varrho  &\in L^r_{\textrm{per}}(\Omega) \cap W^{1,r}_{\textrm{per}}({\Omega_1}) \cap \cdots \cap W^{1,r}_{\textrm{per}}({\Omega_N}), \qquad
P  \in W^{1,r}_{\textrm{per}}({\Omega}), \qquad 
\eta, \,\eta_i  \in C_{\textrm{per}}^{1,\alpha}(\mathbb{R}), \end{align*}
 that satisfy \eqref{weakeuler}, \eqref{varrhopositive}--\eqref{stablestratificationeuler}, \eqref{pressure continuous},  \eqref{weakeulerboundary}, and have no horizontal stagnation \eqref{nostagnation}.
\end{problem}
Here the subscript ``per'' indicates $2L$-periodicity in the $x$-direction.  Note also that, by  Morrey's inequality,  $P \in W_{\textrm{per}}^{1,r}(\Omega) \subset C_{\textrm{per}}^{0,\alpha}(\overline{\Omega})$; hence the continuity of the pressure across the internal interfaces \eqref{pressure continuous} is encoded in the choice of function spaces.  

 For classical solutions of Problem \ref{weak Euler prob}, conservation of mass and incompressibility \eqref{weakvolume}--\eqref{weakmass} ensures that we may define a function $\psi = \psi(x,y)$ by 
\be \psi_x = -\sqrt{\varrho}v,\qquad \psi_y = \sqrt{\varrho} (u-c), \qquad \textrm{in } \bigcup_i \Omega_i. \label{defpsi} \ee
The same holds true in the weak setting, but this fact is not immediately obvious; we prove it in Lemma \ref{chainrulelemma}.  $\psi$ is called the pseudo (relative) stream function, though we will simply refer to it as the \emph{stream function}.  The factor of $\sqrt{\rho}$ is an innovation due to Yih \cite{yih1965dynamics}, its utility will become apparent later in \S\ref{stream function formulation section}.

From the definition \eqref{defpsi} and \eqref{nostagnation}, we see that the no stagnation condition takes the form  
\be \psi_y < 0, \qquad \textrm{in } \bigcup_i{\Omega_i}. \label{nostagnationpsi} \ee
The level sets of $\psi$, called the \emph{streamlines}, capture a great deal of information about the flow.  In particular, observe that  \eqref{weakkinematicsurface}, \eqref{weakkinematicbed}, and \eqref{weakkinematicinterface} state precisely that the free surface, internal interfaces, and ocean bed are each streamlines.  Since \eqref{defpsi} only determines $\psi$ up to a constant in each $\Omega_i$, we may take $\psi$ to be continuous in $\overline{\Omega}$, and, without loss of generality, set  $\psi = 0$ on the air--sea interface.  Then $\psi = -p_0$ on the bed $\{y = -d\}$, where $p_0$ is the \emph{(relative) pseudo-volumetric mass flux}: 
\be p_0 := \int_{-d}^{\eta(x)} \sqrt{\varrho(x,y)}\left[u(x,y) -c \right] \, dy. \label{defp0} \ee
It is straightforward to show that $p_0$ is a (strictly negative) constant, i.e., it does not depend on $x$ (cf., e.g.,  \cite{walsh2009stratified}).  Physically, $p_0$ describes the rate of fluid moving through any vertical line in the fluid domain and with respect to the transformed vector field $\sqrt{\varrho}(u-c,v)$.

The conservation of mass \eqref{weakmass} implies that $\nabla \varrho$ is orthogonal to the velocity field in each layer, and hence we may let $\rho:[p_0,0] \to \mathbb{R}^{+}$ be given such that 
\be \varrho(x,y) = \rho(-\psi(x,y)) \label{defrho} \ee
throughout the fluid. The choice to use $-\psi$ as the argument is motivated by the change of variables introduced in \S\ref{height eq formulation section}. We shall refer to $\rho$ as the \emph{streamline density function}, though one may alternatively view it as the Lagrangian density.  Conversely, $\varrho$ will be called the Eulerian density.  From the definition and \eqref{varrhopositive}, we see that
\be \rho > 0, \qquad \textrm{in } [p_0,0].\label{rhopositive} \ee
Moreover, taking into account the values of $\psi$ on the boundary of $\Omega$ and \eqref{nostagnationpsi}, the stable stratification condition \eqref{stablestratificationeuler} is equivalent to 
\be p \mapsto \rho(p) \textrm{ is non-increasing on $[p_0, 0]$.} \label{stablestratification} \ee

Conservation of energy can be expressed via Bernoulli's theorem, which states that the quantity
\be E := P + \frac{\varrho}{2}\left( (u-c)^2 + v^2\right) + g\varrho y, \label{defE} \ee
is constant along streamlines.  This is well-known for classical solutions of the Euler equations, and remains true in the weak setting (we confirm this in the process of proving Lemma \ref{equivalence lemma 1}).  This permits us to define a function $\beta:[0,|p_0|] \to \mathbb{R}$ such that 
\be \frac{dE}{d\psi}(x,y) = -\beta(\psi(x,y)), \qquad \textrm{in } \bigcup_i \Omega_i. \label{defbeta} \ee
Following the terminology of \cite{walsh2009stratified,walsh2014local,walsh2014global}, we call $\beta$ the \emph{Bernoulli function} corresponding to the flow; it describes roughly how the Bernoulli constant varies with respect to the streamlines.    	

In this work, we will consider waves with a Bernoulli function of a specific form: we say that the wave is \emph{periodic localized near the crest} provided that
$$ \beta(\psi) = \rho^\prime(-\psi) \left[ \frac{1}{2} c^2 + g\int_{p_0}^{-\psi} \frac{1}{c \sqrt{\rho(s)}} \, ds - gd \right].$$
While it may not be immediately apparent, this choice has a physical significance:  one can show that any \emph{solitary} stratified wave limiting to uniform flow upstream and downstream must have a Bernoulli function of the type above (see \S\ref{localized near crest section}).  These were the first class of stratified waves for which an exact solution theory was obtained \cite{terkrikorov1963theorie}, and it is the only class for which the existence of large-amplitude solitary waves is currently known.  Although this paper considers the periodic case, the waves we construct will decay  exponentially away from the crest, with a rate that is independent of the period for $L$ sufficiently large.  Loosely speaking, they are periodic approximations of solitary waves.  For a more thorough discussion, see Remark \ref{localized periodic remark}.

\subsection{Informal statement of results} \label{intro statement of results section}

We now give a summary of our results, interspersed with some explanatory comments.  For the time being, several of the hypotheses are left unquantified.  The  complete statement is in \S\ref{proof of main theorems section}.

Fix a H\"older exponent $\alpha \in (0,1)$, and put $r := 2/(1-\alpha)$.  Choose a pseudo volumetric mass flux $p_0 < 0$, period $2L$, and ocean depth $d > 0$.  Let $\rho_* \in C^{1,\alpha}([-p_0,0])$ be a stably stratified streamline density function, and suppose that $(u_*,v_*, \varrho_*, P_*, \eta_*)$ is a solution of Problem \ref{weak Euler prob}.  Assume further that  (i) it is periodic localized near the crest with period sufficiently large, (ii) its wave speed $c_*$ is supercritical, (iii) it is a wave of strict elevation, and (iv) it is sufficiently small-amplitude.  Then each of the statements (A1), (A2), (B), and (C) below hold true. 

\begin{itemize}
\item[(A1)]  \emph{Existence of nearby many-layered solutions.} There is a neighborhood $\mathcal{U}$ of $\rho_*$ in $L^\infty([-p_0,0])$ such that, for any $\rho \in \mathcal{U}$ that is non-increasing and piecewise smooth, there exists a solution $(u,v, \varrho, P, \eta)$ to the steady Euler equations with streamline density function $\rho$, period $L$, and wave speed $c$.  Moreover,  $u$ and $\eta$ are even in $x$, while $v$ is odd in $x$.  \end{itemize}  

  The key point here is that $\mathcal{U}$ contains streamline densities functions with \emph{arbitrarily many jump discontinuities}.   These many-layered solutions in fact converge to the continuously stratified wave as $\rho \to \rho_*$ in $L^\infty$ in the following sense: 
\begin{itemize}
\item[(A2)]  \emph{Convergence of the height function and wave speed.} For each $(q,p) \in \mathcal{R} := \mathbb{R} \times [p_0, 0]$, let $h_*(q,p)$ denote the height above the bed $\{ y = -d\}$ of the point with $x$-coordinate $q$ that lies on the streamline $\{ \psi_* = -p\}$ for the wave with velocity field $(u_*, v_*)$; let $h$ designate the corresponding height for the wave furnished by statement (A1).  Then 
$$  h_* = h + \mathcal{O}(\| \rho - \rho_*\|_{L^\infty}) \qquad \textrm{in } W_{\mathrm{per}}^{1,r}(\mathcal{R}) \subset C_{\textrm{per}}^{0,\alpha}(\overline{\mathcal{R}}).$$
Likewise, the wave speed $c$ satisfies 
$$ c_* = c + \mathcal{O}(\| \rho - \rho_* \|_{L^\infty}).$$ \end{itemize}
The reason we do not write  (A1) and (A2) directly in terms of $(u,v)$ and $(u_*,v_*)$ is simple:  the velocity fields are defined on different domains --- $\Omega$ and $\Omega_*$, respectively --- and so comparing them in a single function space is unwieldy.   The formulation of the problem in terms of the height function is described in \S\ref{height eq formulation section}, and the equivalence of this to the original Euler formulation is proved in Lemma \ref{equivalence lemma 1}.   We also mention that there is an exact expression for $c-c_*$, see \eqref{wave speed estimate}. 

Statements (A1) and (A2) are a form of continuity result.  Let $\mathscr{D}$ denote the set of bounded, layer-wise smooth, stable streamline density functions; $\mathscr{D}$ can be viewed as a convex subset of $L^\infty([p_0,0])$.  Then (A1) proves that there exists a mapping $\rho \in \mathscr{D} \cap \mathcal{U} \mapsto h \in W_{\textrm{per}}^{1,r}(\mathcal{R})$, and (A2) follows from the fact that this mapping is Lipschitz continuous. 
 
Away from the internal interfaces, the solutions enjoy improved regularity: 
\begin{itemize}
\item[(B)]  \emph{Improved regularity.} Let $I \subset\subset [p_0, 0] \setminus \{ p_1, \ldots, p_{N-1}\}$ be a connected set for which $\rho \in C^{1,\alpha}(\overline{I})$.  Then
$$ \| h - h_*\|_{C_{\mathrm{per}}^{1,\alpha}(\mathbb{R} \times \overline{I})}  \leq C_1 \left(   \| \rho - \rho_*\|_{L^\infty([p_0,0])} +  \| \rho - \rho_*\|_{C^{1,\alpha}(\overline{I})} \right) ,$$
where $C_1 > 0$ depends on the length of $I$, $\rho_*$, and $h_*$.  
\end{itemize}
In general, $C_1$ will increase as the length of $I$ decreases.  One consequence of (B) is that, if $\rho_*$ is  constant in some region, then the approximation by a layer-wise constant density stratification converges in a higher regularity norm there.  

Lastly, we prove a result on the convergence of the pressure.  This is specifically aimed at the surface reconstruction problem.  
\begin{itemize}
\item[(C)] \emph{Convergence of the pressure.} Let a connected set $I \subset\subset [p_0, 0]\setminus \{ p_1, \ldots, p_{N-1}\}$ be given with $p_0 \in I$, and assume that $\rho \in C^{1,\alpha}(\overline{I})$.   Denote by  $P_{\textrm{b}}$  the trace of the pressure on the ocean bed for the traveling wave with density $\rho$, and let $P_{{\textrm{b}}*}$ be the trace of $P_*$ on the bed.    Then 
$$ \| P_{\textrm{b}} - P_{\textrm{b}*} \|_{C_{\textrm{per}}^{0, \alpha}(\mathbb{R})} \leq C_2 \left(   \| \rho - \rho_*\|_{L^\infty([p_0,0])} +  \| \rho - \rho_*\|_{C^{1,\alpha}(\overline{I})} \right) ,$$
where $C_2 > 0$ depends on the length of $I$, $\rho_*$, and $h_*$
\end{itemize}
Actually, we prove something much stronger than this:  convergence occurs in a region, not merely on the bed.   

\begin{remark}  A large family of waves meeting these hypotheses are known to exist due to the work of Turner \cite{turner1981internal,turner1984variational}, Amick \cite{amick1984semilinear}, and Amick--Turner \cite{amick1986global}; see also Theorem \ref{Turner main theorem}.  For the precise definition of periodic waves localized near the crest, and strict waves of elevation,  see Definition \ref{definition localized periodic}.  Roughly speaking, a wave of elevation is one where each streamline lies above the corresponding streamline in a hydrostatic flow.  The concept of supercritical speed is discussed in \S\ref{ter-krikorov formulation section}, and is strongly connected to the idea of conjugate flows (cf., e.g., \cite{benjamin1971unified}). We also mention that, in the constant density case, M. Wheeler \cite{wheeler2014froude} has recently proved that hypothesis (iii) implies hypothesis (ii), which suggests that our assumptions can be pared down further.  
\end{remark}

The closest analogue to these results in the mathematical literature is due to G. James \cite{james2001internal}, who considered the reverse limit.  That is, he showed that in an $L^2$ neighborhood of a steady {two-layer} solitary wave in a channel with piecewise constant streamline density, there is a manifold of continuously stratified waves.  While this shares some common features with the present work, they are quite distinct.  For either case, one of the main challenges is finding a formulation that encapsulates both layer-wise continuous and continuous stratification.  Because we must contend with arbitrarily many layers, however, the similarity more-or-less ends there.   The choice of tools is also quite different.  James employs some sophisticated techniques from spatial dynamics, essentially a center manifold reduction approach in the spirit of Kirchg\"assner \cite{kirchgassner1982wavesolutions}.  On the other hand, our method, at its heart, amounts to a novel application of the implicit function theorem supplemented by a penalization scheme and nonlinear elliptic PDE theory.

Applied scientists frequently elect to use simply a two-layer model.  Needless to say, this introduces some baseline error into the modeling.  In certain physical applications --- particularly when the pycnoclines are extremely thin --- this error is higher order and two-layer schemes are well-aligned with experimental data  \cite{camassa2011optimal,grue1999properties}.   For flows with relatively fat pycnoclines, however, the two-layered model is less successful, and so a number of alternative procedures have been proposed (cf., e.g., \cite{fructus2004grue,grue2000breaking,grue2002solitary}).  We will not give a full account of the applied literature on this topic.  Suffice it to say that our results imply that the many-layered approximation will converge, and hence the baseline error can be made arbitrarily small. 

Finally, let us consider some potential applications and extensions.  One of our primary reasons for initiating this program was the desire to further the qualitative theory of steady stratified waves.  As one example, we mention the problem of recovering the air--water interface of a traveling wave knowing only its wave speed, its upstream and downstream form, and its pressure on the ocean floor.  In the irrotational and homogeneous density setting, this has recently been studied by several authors.  Constantin \cite{constantin2012pressure}, and Clamond and Constantin \cite{constantin2013pressure} derived an explicit formula relating the trace of the pressure on the bed to $\eta$.   Concurrently,  Oliveras, Vasan, Deconinck, and Henderson \cite{oliveras2012recovering} obtained an implicit relation via an alternative formulation of the problem.    It turns out that each of these works can be readily adapted to the case of layer-wise irrotational and constant density waves.  Hence, we are able to reconstruct a continuously stratified wave using an approximation scheme.  This process will be detailed in an accompanying paper \cite{ChenWalsh2014pressure}.

It would be highly desirable to be able to treat directly the case of solitary waves.  This may indeed be possible, but it would require a nontrivial generalization of our approach.  Specifically, we rely on the Fredholm properties of several elliptic differential operators, which in general fail on unbounded domains.  On the other hand, the recent work of M. Wheeler \cite{wheeler2013solitary} on large-amplitude rotational (but constant density) solitary waves provides some ideas for resolving these issues.  This is something we hope to address in a forthcoming paper. 

Another natural improvement would be to broaden the class of allowable Bernoulli functions.  Over the past several years, a fairly robust existence theory for large-amplitude periodic steady stratified waves has been developed by the group of J. Escher, D. Henry, A.--V. Matioc, and B.--V. Matioc (cf. \cite{escher2011stratified,henry2013local,henry2013global}), and one of the authors (cf.  \cite{walsh2009stratified,walsh2014local,walsh2014global}).  In particular, these works allow for either a general $\beta$, or at least Bernoulli functions in a substantially less restrictive class.   It seems clear from the analysis in \S\ref{continuity section} that the continuity result would  hold for these solutions, provided that they were waves of elevation.  This, however, does not follow from the approach pursued by the above authors.  Some new ingredient may be necessary.

\subsection{Structure of the paper}  
We begin, in \S\ref{formulation section}, by introducing several more amenable formulations of the Euler system.  In particular, we employ the Dubreil-Jacotin transformation to fix the domain.  This {leads} us to the height function $h$ encountered statements (A2) and (B) above.  A further rescaling --- one that is especially well-suited to analyzing periodic waves localized near the crest --- furnishes a new unknown $w = w(\xi, \zeta)$, and a rescaled streamline density function $\mathring{\rho} = \mathring{\rho}(\zeta)$.  

The result of these efforts is a quasilinear divergence form PDE satisfied by $w$ in a periodic strip (see Problem \ref{ter-krikorov prob}).  In the absence of stagnation, the system is elliptic and $\mathring{\rho}$ {appears} as a coefficient.  However, in this formulation, stagnation is prevented precisely when $w_\zeta > -1$.  This is a serious difficultly:  the solutions $w$ we consider can only be expected to be of class $W_{\textrm{per}}^{1,r}$ on the whole strip.  They will naturally enjoy improved regularity inside each fluid layer, but because we are allowing for arbitrarily many layers, we cannot exploit this additional smoothness.

Our approach is to instead introduce a {penalized problem} in the spirit of Turner \cite{turner1981internal}.  That is, we add a cutoff function so that, when $\| \nabla w \|_{L^\infty}$ is larger than a certain threshold, the principal part of the PDE is replaced by $\nabla \cdot (\mathring{\rho} \nabla w)$, and when $\| \nabla w \|_{L^\infty}$ is sufficiently small, the problem agrees with the physical one.  

This enables us, in \S\ref{continuity penalized section}, to construct a smooth curve of solutions to the penalized problem parameterized by $\mathring{\rho}$ lying in an $L^\infty$ neighborhood $\mathring{\mathcal{V}}$ of $\mathring{\rho}_*$.    However, because $\mathring{\rho}$ is merely in $L^\infty$, elliptic regularity theory does not directly imply that these are  physical solutions:  in general, one does not have control of $w$ in $W^{1,\infty}$, and hence even for $\| \mathring{\rho} - \mathring{\rho}_*\|_{L^\infty} \ll 1$,  $\|\nabla w\|_{L^\infty}$ may lie above the cutoff threshold.  This is not merely a technical point: $\mathring{\mathcal{V}}$ includes densities with infinitely many jump discontinuities; the physicality of such flows is dubious at best.  

To complete the argument, we derive \emph{a priori} estimates for smooth rescaled streamline density functions lying inside $\mathring{\mathcal{V}}$.  Using a limiting procedure, we show that layer-wise smooth rescaled streamline densities in $\mathring{\mathcal{V}}$ likewise give rise to solutions of the physical problem, provided that they are sufficiently small-amplitude.  This is carried out in \S\ref{a priori section} and \S\ref{continuity proof section}, culminating in Theorem \ref{main continuity theorem}.  

Finally, in \S\ref{proof of main theorems section}, we translate this back into the language of statements (A)--(C) above.  This gives our main result, Theorem \ref{main theorem}.

\section{Reformulations} \label{formulation section}
In this section, we introduce a number of equivalent formulations of the steady stratified water wave system.  Each of these will be particularly suited to one of the problems that we consider in the remainder of the paper.  The fact that they are equivalent is far from obvious, particularly in the weak regularity setting.  We therefore include a proof, but relegate it to Appendix \ref{formulation equivalence appendix} as it is not our primary concern.  

\subsection{Stream function formulation} \label{stream function formulation section}
Recall from the introduction that the (pseudo relative) stream function $\psi$ is defined by 
$$ \nabla^\perp \psi = \sqrt{\varrho} (u-c,v) \qquad \textrm{in } \bigcup_i \Omega_i.$$
A relatively simple computation confirms that it solves Yih's equation 
$$ \Delta \psi - gy \rho^\prime(-\psi) + \beta(\psi)  = 0, \qquad \textrm{in } \bigcup_i \Omega_i. $$
Indeed, this relatively elegant expression was the motivation for defining $\psi$ as we did in \eqref{defpsi}; slightly less pleasant versions of Yih's equation were found earlier by Dubreil-Jacotin \cite{dubreil1937theoremes} and Long \cite{long1953some}.  We mention that, for weak solutions, the derivation of Yih's equation is not quite so simple (cf. Lemma \ref{equivalence lemma 1}).

By the kinematic boundary {conditions}, each of the free surfaces is a streamline.  As we have already discussed, the energy density $E$ \eqref{defE} is constant on streamlines.  Evaluating it on the air--sea interface yields
\be |\nabla \psi|^2 + 2g \varrho(x, \eta(x)) \left(\eta(x)+d\right) = Q, \qquad \textrm{on } y = \eta(x) \label{p2:capgravdefQ} \ee
where 
 \be Q := 2(E|_\eta-P_{\textrm{atm}} + g\varrho|_\eta d). \label{defQ} \ee
One can repeat this procedure at the interface between any two layers; by the continuity assumption on the pressure, this yields the identity
$$ \jump{|\nabla \psi |^2}_i + 2g \jump{\varrho}_i (y+d) = Q_i, \qquad \textrm{on $\{ y = \eta_i(x)\}$,} $$
where $\jump{\cdot}_i$ denotes the jump over the interface $\partial \Omega_i \cap \partial \Omega_{i+1}$ of a quantity defined on $\overline{\Omega_i \cup \Omega_{i+1}}$ from $\Omega_{i+1}$ to $\Omega_i$.  Here $Q_i$ is a constant representing the jump in the energy density across the $i$-th interface:
\be Q_i := 2( \jump{E}_i + g\jump{\varrho}_i d). \label{defQi} \ee 

Collecting these together, we arrive at the following reformulation of the Euler problem in terms of the stream function.

\begin{problem}[Weak stream function problem] \label{general weak stream function prob} Let a Bernoulli function $\beta$ be given with
$$ \beta \in L^r([p_0, 0]) \cap W^{1,r}([p_0, p_1]) \cap \cdots \cap W^{1,r}([p_{N-1}, 0]). $$
We say that a stream function $\psi$, interfaces $\eta$, $\eta_1$, \ldots, $\eta_{N-1}$, and constants $(Q, Q_1, \ldots, Q_{N-1})$ solve the weak stream function problem provided that the following statements hold: they exhibit the regularity 
$$ \psi \in W_{\mathrm{per}}^{1,r}(\Omega) \cap W_{\mathrm{per}}^{2,r}(\Omega_1) \cap \cdots \cap W_{\mathrm{per}}^{2,r}(\Omega_N), \qquad 
{{\eta,}} \, \eta_i  \in C^{1,\alpha}_{\mathrm{per}}(\mathbb{R}),$$
where $\Omega_i$ is defined as in \eqref{def Omega_i};   $\psi$ solves Yih's equation (in the distributional sense) 
\bse \label{streamfunctionprob}  \begin{align} \Delta \psi - gy \rho^\prime(-\psi) + \beta(\psi) & = 0, \qquad \textrm{in } \bigcup_i \Omega_i,  \label{weakyih} \end{align}
along with the boundary conditions 
\begin{align}
|\nabla \psi|^2 + 2g \rho (y+d) &= Q, \qquad \textrm{on } \{y = \eta(x)\}, \label{weakbernoulli} \\
\psi & = 0, \qquad \textrm{on } \{y = \eta(x)\}, \label{psisurfacecond} \\
\psi & = -p_i, \qquad \textrm{on } \{y = \eta_i(x)\}, \label{psitracecond} \\
\psi &= -p_0, \qquad \textrm{on } \{y = -d\}; \label{psibedcond} \end{align} 
 the corresponding pressure is continuous,
\be \jump{ |\nabla \psi|^2}_i + 2g \jump{{\varrho}}_i (y+d) = Q_i, \qquad \textrm{on $\{ y = \eta_i(x)\}$; } \label{psicontpress} \ee \ese
and there is no horizontal stagnation \eqref{nostagnationpsi}.
\end{problem} 
 
\subsection{Height equation formulation}  \label{height eq formulation section} A natural way to fix the boundary in the absence of stagnation is to use the streamlines as a vertical coordinate.  One strategy in this direction is to employ the Dubreil-Jacotin transformation 
$$ (x,y) \mapsto (x, -\psi(x,y)) =: (q,p),$$
which has the effect of mapping a single horizontal period of the fluid domain to the rectangle 
$$ \mathcal{R} := \{ (q,p) \in (-L,L) \times (p_0,0)\}.$$
Similarly, each fluid layer $\Omega_i$ is mapped to a strip
$$ \mathcal{R}_i := \{ (q,p) \in (-L,L) \times (p_{i-1}, p_i) \},$$
where $p_N := 0$, and $\partial \Omega_i \cap \partial \Omega_{i+1} = \{ \psi = -p_i \}$, for $i = 1, \ldots N-1$.    

Let $h = h(q,p)$ be the height above the bed of the point with $x = q$ and lying on the streamline $\{\psi = -p\}$, 
\be h(q,p) := y + d. \label{defheight} \ee
Assuming for the time being that $h$ and $\rho$ are smooth,  the stream-function problem  \eqref{streamfunctionprob} can be reformulated as follows: Find  $(h,Q)$ such that $h$ is $2L$-periodic in $q$, 
\be h_p > 0, \qquad \textrm{in $\overline{\mathcal{R}}$},\label{hppositive} \ee
 and the height equation is satisfied,
\be \left \{ \begin{array}{lll}
(1+h_q^2)h_{pp} + h_{qq}h_p^2 - 2h_q h_p h_{pq}  
\\ \qquad -g(h-d)\rho_p h_p^3  = -h_p^3 \beta(-p), & \textrm{in } \mathcal{R}, \\
& & \\
1+h_q^2 + h_p^2(2g\rho h- Q)  = 0, & \textrm{on } \{p = 0\}, \\
h = 0, & \textrm{on }  \{p = p_0\}. \end{array} \right. \label{heighteq} \ee
See \cite{walsh2009stratified} for the details. 

Now let us consider the situation where $\rho$ is layer-wise smooth.  We can recast \eqref{heighteq} in a weaker form by exploiting the divergence structure of the interior equation:
\bse \label{weakheightprob}
\begin{align}
\left( - \frac{1+h_q^2}{2h_p^2} + B - g \rho (h-d)  \right)_p + \left( \frac{h_q}{h_p} \right)_q  + g\rho h_p  & = 0,\qquad \textrm{in } \bigcup_i \mathcal{R}_i, \label{weakheighteq} \\
-\frac{1+h_q^2}{2h_p^2}  - g\rho h + \frac{Q}{2} & = 0, \qquad \textrm{on } \{ p = 0 \},\label{weakventtseleq} \\
 h & = 0, \qquad \textrm{on } \{ p = p_0\}. \label{weakbedcond} 
\end{align} 
Here 
\be B(p) := \int_0^p \beta(-s) \, ds, \qquad p \in [p_0, 0]. \label{defB}\ee 
The continuity of the pressure \eqref{psicontpress} becomes a transmission boundary condition posed on each interfacial streamline:
\be -\jump{\frac{1+h_q^2}{2h_p^2}}_i - g \jump{\rho}_i h + \frac{Q_i}{2} = 0 \qquad \textrm{on } \{ p = p_i \}. \label{weaktransmissioncond} \ee \ese
Alternatively, we combine \eqref{weakheighteq} and \eqref{weaktransmissioncond} to obtain a single PDE satisfied in the distributional sense on the entire strip $\mathcal{R}$:
\be \left( - \frac{1+h_q^2}{2h_p^2} + B + \sum_i \frac{Q_i}{2} \mathds{1}_{\mathcal{R}_i} - g \rho (h-d)  \right)_p + \left( \frac{h_q}{h_p} \right)_q  + g\rho h_p   = 0 \qquad \textrm{in } \mathcal{R}, \label{weakheighteq2}  \ee
where $\mathds{1}_{\mathcal{R}_i}$ is the indicator function for $\mathcal{R}_i$.  Of course these additional terms are meant to account for the fact that the energy $E$ will  jump across the interfaces.  

\begin{problem}[Weak height equation problem] \label{general weak height eq prob} 
Let streamline density function $\rho$, and Bernoulli function $\beta$  be given with the regularity
\begin{align*}
 \rho, ~\beta &\in L^r([p_0, 0]) \cap W^{1,r}([p_0, p_1]) \cap \cdots \cap W^{1,r}([p_{N-1}, 0]). 
  \end{align*}
 We say $(h, Q, Q_1, \ldots, Q_{N-1})$ solves the weak height equation problem provided that the following statements hold:  $h$ exhibits the regularity   
$$ h  \in W_{\mathrm{per}}^{1,r}(\mathcal{R}) \cap W_{\mathrm{per}}^{2,r}(\mathcal{R}_1) \cap \cdots \cap W_{\mathrm{per}}^{2,r}(\mathcal{R}_N); $$
there is no stagnation \eqref{hppositive}; and the quasilinear elliptic system \eqref{weakheightprob} is satisfied.  Equivalently, we may replace \eqref{weakheighteq} and \eqref{weaktransmissioncond} with the requirement that \eqref{weakheighteq2} holds in the sense of distributions. \end{problem}

\subsection{Periodic waves localized near the crest} \label{localized near crest section}
  Up to this point, we have made no restriction on the form of the Bernoulli function $\beta$; we now specialize to the setting of our applications.  As motivation,  suppose for the moment that the fluid domain is of infinite extent in the horizontal direction.  Assume also that 
$$ (u-c, v) \to (-c, 0), ~ \eta \to 0 \qquad \textrm{as } x \to \pm \infty,$$  
meaning that the flow is irrotational and laminar at upstream and downstream infinity.  In particular, from the definition of $E$ in \eqref{defE}, this implies that 
$$ E = P_{\textrm{atm}} + \frac{1}{2} \varrho c^2 \qquad \textrm{on } \{ y = \eta(x) \},$$
whence, by \eqref{defQ},
$$ Q = \rho(0) c^2 + 2g \rho(0) d.$$

Consider the form that the corresponding Bernoulli function must take.   Observe that the limiting pressure will be hydrostatic
$$ P(x,y) \to \mathring{P}(y) := P_{\textrm{atm}} + g \int_y^0 \mathring{\varrho}(s) \, ds, \qquad \textrm{as } x \to \pm \infty,$$
where $\mathring{\varrho}$ is the limiting value of the Eulerian density:
$$ \varrho \to \mathring{\varrho} = \mathring{\varrho}(y), \qquad \textrm{as } x \to \pm \infty.$$

Let $h(q,p) \to \mathring{h}(p)$ as $q \to \pm \infty$.  That is, $\mathring{h}(p)$ is the asymptotic height above the bed of the streamline $\{ \psi = -p \}$.  Recalling the definition of $h$ in \eqref{defheight}, this means that $\mathring{y} := \mathring{h} - d$ is the limiting value of the $y$-coordinate of points on that streamline.  From the change of variables identity  
$$ h_p = \frac{1}{\sqrt{\rho}(c-u)},$$
we see that $\rho$ and $c$ determine $\mathring{h}$ according to
\be \mathring{h}(p) = \int_{p_0}^p \frac{1}{c \sqrt{\rho(s)}} \, ds. \label{identity for limiting h} \ee
Moreover, since $\eta$ limits to $0$ upstream and downstream, we see that 
\be \label{c determined by rho} d = \int_{p_0}^0 \frac{1}{c\sqrt{\rho(s)}} \, ds. \ee
In this work, we will keep $d$ and $p_0$ fixed, and so \eqref{c determined by rho} will determine the wave speed $c$.  

This allows us to compute the value of $E$ on an internal streamline (away from the discontinuities of $\rho$), by evaluating it at upstream or downstream infinity:
$$ E|_{\{ \psi = -p\}}  = \mathring{P}(\mathring{y}) + \frac{1}{2} \rho c^2 + g \rho \mathring{y}.$$
Differentiating this with respect to $p$ yields
\begin{align*} \beta(-p) &=  \mathring{P}^\prime(\mathring{y})  \mathring{y}_p+ \frac{1}{2} \rho^\prime c^2 + g \rho^\prime \mathring{y}+ g \rho \mathring{y}_p \\
& = \rho^\prime \left[ \frac{1}{2} c^2 + g\mathring{y} \right].  \end{align*}
Here we have used the fact that $\mathring{P}$ is hydrostatic to infer the second line from the first.  By a similar argument, we see that $\mathring{h}$ is related to the constants $Q_1$, \ldots, {$Q_{N-1}$} according to  
$$ Q_i = \jump{\rho}_i c^2 + 2g \jump{\rho}_i \mathring{h} \qquad \textrm{on } \{ p = p_i \}.$$

The above considerations {show} that a solitary wave that limits to a uniform irrotational flow upstream and downstream will necessarily have a Bernoulli function of a specific form.   Moreover,  the constants $Q$, $Q_1$, \ldots, {$Q_{N-1}$} are determined by the limiting heights of the corresponding streamlines.     We therefore make the following definition:
\begin{defn} \label{definition localized periodic} A periodic traveling wave is said to be \emph{localized near the crest} provided that the Bernoulli function $\beta$ for the flow is of the form 
\be \beta(-p) = \rho^\prime \left[ \frac{1}{2} c^2 + g(\mathring{h}-d) \right], \label{localized beta form} \ee
where $\mathring{h} : [p_0, 0] \to \mathbb{R}_+$ is defined by \eqref{identity for limiting h}, the wave speed $c$ is determined via \eqref{c determined by rho}, and the constants $Q, Q_i$ are given by
\be 
Q  = \rho(0) c^2 + 2g \rho(0) d, \qquad Q_i  = \jump{\rho}_i \left[ c^2 + 2g \mathring{h}(p_i)\right].  \label{Q and Q_i for localized wave} \ee
If, in addition, the height $h$ for the flow satisfies  
\be h-\mathring{h} \geq 0 \qquad \textrm{in } \mathcal{R}, \label{wave of elevation condition} \ee
then the wave is said to be a \emph{wave of elevation}, and a  \emph{strict wave of elevation} provided that
$$ h -\mathring{h} > 0 \qquad \textrm{in } \overline{\mathcal{R}} \setminus \{ p = p_0 \}.$$
\end{defn}

We now state precisely the formulations of the previous two subsections in the setting of periodic waves localized near the crest.  

\begin{problem}[Stream function problem for waves localized near the crest] \label{localized stream function prob} Let streamline density function $\rho$ be given with the regularity
\begin{align*}
 \rho &\in L^\infty([p_0, 0]) \cap W^{1,r}([p_0, p_1]) \cap \cdots \cap W^{1,r}([p_{N-1}, 0]),
\end{align*}
define limiting height $\mathring{h}$ by \eqref{identity for limiting h}, and let the wave speed $c$ be given by \eqref{c determined by rho}.   Find $(\psi, \eta_1, \ldots, \eta_{N-1}, \eta)$ with the regularity   
$$ \psi \in W_{\mathrm{per}}^{1,r}(\Omega) \cap W_{\mathrm{per}}^{2,r}(\Omega_1) \cap \cdots \cap W_{\mathrm{per}}^{2,r}(\Omega_N), \qquad 
\eta, \eta_i  \in C^{1+\alpha}_{\mathrm{per}}(\mathbb{R}).$$
  We require that $\psi$ is constant on $\partial \Omega_i$, there is no horizontal stagnation \eqref{nostagnationpsi}, and $\psi$ solves the elliptic PDE \eqref{streamfunctionprob}, where $\beta$, $Q$, $Q_1, \ldots, {Q_{N-1}}$ are defined according to \eqref{localized beta form}, and \eqref{Q and Q_i for localized wave}, respectively.
\end{problem}

\begin{problem}[Height equation problem for waves localized near the crest] \label{localized height equation prob}  Let streamline density function $\rho$  be given with the regularity
\begin{align*}
 \rho &\in L^\infty([p_0, 0]) \cap W^{1,r}([p_0, p_1]) \cap \cdots \cap W^{1,r}([p_{N-1}, 0]),
\end{align*}
 define the limiting height $\mathring{h}$ by \eqref{identity for limiting h}, and wave speed $c$ by \eqref{c determined by rho}.  Find  
$$ h  \in W_{\mathrm{per}}^{1,r}(\mathcal{R}) \cap W_{\mathrm{per}}^{2,r}(\mathcal{R}_1) \cap \cdots \cap W_{\mathrm{per}}^{2,r}(\mathcal{R}_N), $$
with no stagnation \eqref{hppositive}, and satisfying \eqref{weakheightprob}, where $\beta$, $Q$, $Q_1, \ldots, {Q_{N-1}}$ are defined according to \eqref{localized beta form}, and \eqref{Q and Q_i for localized wave}, respectively.
\end{problem}
\begin{remark}  The interior equation \eqref{weakheighteq} and  transmission boundary condition \eqref{weaktransmissioncond} can be captured by a single equation posed in $\mathcal{R}$ and satisfied in the sense of distributions:
\be \left( - \frac{1+h_q^2}{2h_p^2} - \left[ \frac{1}{2} c^2 + g(h-\mathring{h}) \right] \rho \right)_p + \left( \frac{h_q}{h_p} \right)_q  + g\rho (h-\mathring{h})_p   = 0,\qquad \textrm{in } \mathcal{R}. \label{localized weak height 2} \ee
\end{remark}

In Lemma \ref{equivalence lemma 1}, we prove that the stream function and height equation formulations are equivalent in the more general settings of Problem \ref{general weak stream function prob} and Problem \ref{general weak height eq prob}.  The equivalence of  Problem \ref{localized stream function prob} and Problem \ref{localized height equation prob} is then an immediate corollary.  Moreover, in Lemma \ref{equivalence lemma 1}, it is shown that the existence of solutions to the stream function problem (for any given $\beta$), implies the existence of a solution to Problem \ref{weak Euler prob}.    Thus, in particular, the existence of a solution to either Problem \ref{localized stream function prob} or Problem \ref{localized height equation prob} implies the existence of a solution to Problem \ref{weak Euler prob}.  The converse will not be true unless further restrictions are made to enforce the localization. 

\begin{remark} \label{localized periodic remark}  (i)  Periodic waves localized near the crest are particularly well-suited to approximation by many-layered constant density irrotational flows.  To see this, note that $\beta$ will vanish in any region where $\rho$ is constant.  Recalling Yih's equation \eqref{streamfunctionprob}, this implies that the flow is irrotational in any region of constant density.  On the other hand, in continuously stratified regions, it is easy to see that the vorticity will \emph{not} vanish.  

(ii) From \eqref{localized weak height 2}, it is immediately apparent that waves of elevation are especially important, since for these solutions the term $\rho_p (h-\mathring{h})$  is non-positive.  This implies that the height function is a supersolution of a certain quasilinear elliptic operator, a fact we strongly exploit in \S\ref{a priori section}.  In particular, it is used to deduce monotonicity properties that lead to the key \emph{a priori} estimates.   
\end{remark}

\subsection{Ter-Krikorov formulation for localized waves}  \label{ter-krikorov formulation section}

With Remark \ref{localized periodic remark} in mind, it is useful to consider a further reformulation of the problem specifically aimed at waves of elevation localized near the crest.  This idea, to the best of our knowledge, originates with Ter-Krikorov \cite{terkrikorov1963theorie}.  It is also used in the works of Turner \cite{turner1981internal,turner1984variational}, Amick \cite{amick1984semilinear}, and Amick--Turner \cite{amick1986global} that we draw on in \S\ref{continuity section}.  

We begin by performing a change variables 
$$ (q, p) \mapsto \frac{1}{d}(q, \mathring{y}(p)) =: (\xi, \zeta).$$
Unravelling definitions, we see that $\zeta$ is a streamline coordinate which has been rescaled and non-dimensionalized so that the $y$-coordinates of points sitting on the streamline with label $\zeta$ limit to $\zeta d$.  It can be related directly to $p$ via the formula  
$$ \zeta = \frac{1}{cd} \int_0^p \frac{1}{\sqrt{\rho(s)}} \, ds.$$
Note that the positivity of $\rho$ guarantees that $\zeta(p)$ has an inverse, call it $p(\zeta)$.   The rectangle $\mathcal{R}$ is mapped by the transformation $(q,p) \mapsto (\xi, \zeta)$ to the strip
$$ \mathcal{S} := \{ (\xi, \zeta) : \xi \in (-\frac{L}{d}, \frac{L}{d}), ~ \zeta \in (-1, 0) \}.$$
Analogously, the layers $\mathcal{R}_i$ are mapped to strips
$$ \mathcal{S}_i := \{ (\xi, \zeta) : \xi \in (-\frac{L}{d}, \frac{L}{d}), ~ \zeta \in (\zeta_{i-1}, \zeta_i) \},$$
where $\zeta_i$ is the image of $p_i$.  

Finally, we introduce a new unknown 
\be w(\xi, \zeta) := \dfrac{y(\xi, p(\zeta))}{d} - \zeta. \label{def w} \ee
Recalling the definition of $\zeta$, it is clear that $w$ is a dimensionless quantity measuring the deviation of the height of a point on a streamline from its asymptotic height.  In fact, it is nothing but a rescaled version of $h - \mathring{h}$.  As we have mentioned, $w$ is an extremely natural choice of unknown for waves of elevation. Indeed, $w$ is a wave of elevation precisely when 
\be w \geq 0, \qquad \textrm{in } \mathcal{S}, \label{ter-krikorov wave of elevation} \ee
and a strict wave of elevation provided that 
\be w > 0, \qquad \textrm{in } \overline{\mathcal{S}} \setminus \{ \zeta = -1 \}. \label{def strict wave of elevation} \ee

Let $\mathring{\rho}$ denote the rescaled streamline density function, 
\be \mathring{\rho}(\zeta) := \rho(p(\zeta)). \label{def scaled rho} \ee
  This is slightly inconsistent with our notation in \S\ref{localized near crest section}, but we justify it on the grounds that the rescaling of $\rho$ is being done with a view towards some form of limiting behavior upstream and downstream.  More importantly, this choice allow us to avoid introducing another symbol or variety of accent mark.

From the definitions above, it is elementary to show that  
\begin{gather*} \partial_\xi = \frac{1}{d}\partial_q, \qquad \partial_\zeta = \frac{c\sqrt{\rho}}{d} \partial_p \\
\partial_{\xi}^2 = \frac{1}{d^2} \partial_q^2, \qquad \partial_\xi \partial_\zeta = \frac{c \sqrt{\rho}}{d^2} \partial_p \partial_q, \qquad \partial_\zeta^2 = \frac{c^2}{d^2} \rho \partial_p^2 + \frac{1}{2} \frac{c^2}{d^2} \rho_p \partial_p, \end{gather*}
and hence the height equation \eqref{localized weak height 2} translates to the following divergence form quasilinear system for $w$: 
\bse \label{ter-krikorov eq}  \begin{align} 
 \left( \mathring{\rho} \frac{w_\xi}{1+ w_\zeta} \right)_\xi +  \left( \mathring{\rho} \frac{w_\zeta}{1+w_\zeta} - \mathring{\rho} \frac{w_\xi^2 + w_\zeta^2}{2(1+w_\zeta)^2} \right)_\zeta - \lambda \partial_\zeta ( \mathring{\rho} w) + \lambda \mathring{\rho} w &= 0, \qquad \textrm{in $\mathcal{S}$} \label{ter-krikorov interior eq} \\
w & = 0, \qquad \textrm{on } \{ \zeta = -1\} \label{ter-krikorov bed cond} \\
\mathring{\rho}\left( \frac{w_\zeta}{1+w_\zeta} - \frac{w_\xi^2 + w_\zeta^2}{2(1+w_\zeta)^2} \right) - \lambda \mathring{\rho} w & = 0, \qquad \textrm{on } \{\zeta = 0 \}. \label{ter-krikorov conormal cond} 
\end{align} 
Here
\be \lambda := \frac{gd}{c^2} \label{def lambda}, \ee
which is the Richardson number for the flow.  Note that the no stagnation condition, stated in terms of $w$, is simply
\be w_\zeta > -1. \label{no stagnation w} \ee\ese

\begin{problem}[Ter-Krikorov problem] \label{ter-krikorov prob}  For a given scaled streamline density function
\be  \mathring{\rho} \in L^r((-1, 0)) \cap W^{1,r}((-1, \zeta_1)) \cap \cdots \cap W^{1,r}((\zeta_{N-1}, 0)), \label{regularity of mathringrho} \ee
  find $(w, \lambda)$ with
$$ w  \in W_{\mathrm{per}}^{1,r}(\mathcal{S}) \cap W_{\mathrm{per}}^{2,r}(\mathcal{S}_1) \cap \cdots \cap W_{\mathrm{per}}^{2,r}(\mathcal{S}_N), $$
 satisfying \eqref{ter-krikorov eq}.
\end{problem}
\begin{remark}  We are abusing notation here, since $w$ will be $2L/d$-periodic, rather than $2L$-periodic.  In the sequel, whenever we refer to a space of periodic functions in the $(\xi, \zeta)$-coordinates, this is how it should be interpreted.
\end{remark}
We prove in Lemma \ref{equivalence ter-krikorov} that Problem \ref{ter-krikorov prob} is equivalent to Problem \ref{localized height equation prob}.   By the remarks in the previous section, this implies that solutions of Ter-Krikorov formulation lead to solutions of the other problem formulations as well.  

Problem \ref{ter-krikorov prob} has been studied {by} many authors.  We paraphrase here the existence theorem most relevant to the focus of the present work.  Before that, we must introduce one additional concept:  for a stable rescaled streamline density function $\mathring{\rho}$, the corresponding \emph{critical wave speed} is given by  
\be c_{\textrm{crit}} = c_{\textrm{crit}}(\mathring{\rho}) := \sqrt{gd} \left[ \inf_{\substack{v \in H_{\textrm{per}}^1(\mathcal{S}),\\ v \not\equiv 0}} \frac{ \int_{\mathcal{S}} \mathring{\rho} |\nabla v|^2 \, d\xi \, d\zeta}{ -\int_{\mathcal{S}} \mathring{\rho}^\prime v^2 \, d\xi \, d\zeta + \int_{\{ \zeta = 0\}} \mathring{\rho} v^2 \, d\xi}  \right]^{-1/2}. \label{def crit c} \ee
Physically, this corresponds to the speed at which infinitesimal long waves propagate (cf., e.g., \cite{benjamin1971unified}).  The relevance becomes clearer when it is expressed in terms of $\lambda_{\textrm{crit}} := gd/c_{\textrm{crit}}^2$, the critical Richardson number,  which will satisfy 
\be \lambda_{\textrm{crit}} = \lambda_{\textrm{crit}}(\mathring{\rho}) = \inf_{\substack{v \in H_{\textrm{per}}^1(\mathcal{S}),\\ v \not\equiv 0}} \frac{ \int_{\mathcal{S}} \mathring{\rho} |\nabla v|^2 \, d\xi \, d\zeta}{ -\int_{\mathcal{S}} \mathring{\rho}^\prime v^2 \, d\xi \, d\zeta + \int_{\{ \zeta = 0\}} \mathring{\rho} v^2 \, d\xi}. \label{def lambda crit} \ee
The right-hand side above is easily recognizable as the Rayleigh quotient corresponding to the linearization of \eqref{ter-krikorov eq} about $w \equiv 0$.  We say that $c$ is \emph{supercritical} provided that $c > c_{\textrm{crit}}$, or, equivalently, $\lambda < \lambda_{\textrm{crit}}$.  

\begin{thm}[Turner, \cite{turner1984variational}]  \label{Turner main theorem} There exists a constant $R_{\max} > 0$, and a minimal period $L_{\min}$, depending on $R_{\max}$, such that the following holds.  Fix a rescaled streamline density function $\mathring{\rho}$ as in \eqref{regularity of mathringrho}.  If the period $L \in [L_{\min}, \infty]$, then for each $0 < R \leq R_{\max}$, there exists a solution $w$ to Problem \ref{ter-krikorov prob} for some choice of $\lambda$.  (Here $L = \infty$ corresponds to a solitary wave).   This solution will satisfy 
$$ \int_{\mathcal{S}} \mathring{\rho} \frac{|\nabla w|^2}{1+\partial_\zeta w} \, d\xi \, d\zeta = R^2,$$
will be a strict wave of elevation, and even in the $\xi$-variable.   Moreover, the wave speed will be supercritical, with the explicit bound:
\be \lambda \leq \lambda_{\mathrm{crit}} (1- CR^{4/3}) < \lambda_{\mathrm{crit}}, \label{supercritical bound} \ee
for some constant $C > 0$.  Lastly, $\xi \mapsto w(\xi, \cdot)$ is monotonically decreasing from the crest at $\{ \xi = 0 \}$ to the trough $\{ \xi = L/d\}$.  In fact, $w$ and $|\nabla w|$ are localized exponentially near the crest, with a rate of decay depending on $\mathring{\rho}$ and $R$, but independent of $L$.  
\end{thm}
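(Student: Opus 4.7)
My plan is to produce $w$ as a critical point of a constrained variational problem, with $\lambda$ arising as the associated Lagrange multiplier, following the strategy initiated by Ter-Krikorov \cite{terkrikorov1963theorie} and refined in \cite{turner1984variational,amick1986global}. Modulo a rescaling, the interior equation \eqref{ter-krikorov interior eq} together with the conormal condition \eqref{ter-krikorov conormal cond} is the Euler--Lagrange system for
\[
  \mathcal{D}(w) := \int_{\mathcal{S}} \mathring{\rho}\, \frac{|\nabla w|^2}{2(1+w_\zeta)} \, d\xi \, d\zeta
\]
coupled, through the multiplier $\lambda$, to a quadratic form $\mathcal{K}(w)$ of the type appearing in the Rayleigh quotient \eqref{def lambda crit}; the condition \eqref{ter-krikorov bed cond} is imposed as an essential boundary condition. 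I would minimize $\mathcal{D}$ on the level set $\{ \mathcal{D}(w) = R^2/2 \}$ over the closed class of $w \in H^1_{\mathrm{per}}(\mathcal{S})$ that vanish on $\{\zeta=-1\}$, are non-negative, even in $\xi$, and monotone decreasing in $|\xi|$. Steiner symmetrization in $\xi$ decreases $\mathcal{D}$ while leaving $\mathcal{K}$ unchanged, and the substitution $w \mapsto |w|$ preserves both; consequently, minimizers in this class automatically satisfy \eqref{ter-krikorov wave of elevation} together with the evenness and $\xi$-monotonicity claimed in the theorem.

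The key technical hurdle in the minimization is ruling out stagnation, i.e., maintaining \eqref{no stagnation w} uniformly along minimizing sequences, since the denominator $1+w_\zeta$ is what makes $\mathcal{D}$ nonlinear. I propose choosing $R_{\max}$ small enough that whenever $w$ is admissible with $\mathcal{D}(w) \leq R_{\max}^2/2$, De Giorgi--Nash--Moser estimates applied to \eqref{ter-krikorov interior eq}---whose principal part is uniformly elliptic with merely $L^\infty$ coefficients in $\mathring{\rho}$---upgrade the $H^1$-bound $\|\nabla w\|_{L^2} \lesssim R$ to a uniform $L^\infty$-bound $\|w_\zeta\|_{L^\infty} \le 1/2$. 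Once non-stagnation is secured, $\mathcal{D}$ is coercive and weakly lower semicontinuous, and Rellich compactness on the strip $\mathcal{S}$ (bounded in $\xi$ because $L<\infty$) produces a minimizer $w_*$. The Lagrange multiplier rule then yields $\lambda_* = \lambda(R)$ such that $(w_*, \lambda_*)$ weakly solves \eqref{ter-krikorov eq}, and elliptic regularity applied layer-by-layer on each $\mathcal{S}_i$ delivers the $W_{\mathrm{per}}^{2,r}(\mathcal{S}_i)$ regularity demanded by Problem \ref{ter-krikorov prob}.

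The supercritical bound \eqref{supercritical bound} is the subtlest step, and this is where the $R^{4/3}$ exponent is born. I would probe the minimization with a test function of the form $\varepsilon\, v_*(\zeta)\,\varphi(\varepsilon^{2/3}\xi)$, where $v_*$ is the principal eigenfunction of the Rayleigh quotient \eqref{def lambda crit} and $\varphi$ a slowly varying long-wavelength envelope; expanding $\mathcal{D}$ and $\mathcal{K}$ to fourth order in $\varepsilon$ produces an effective equation of KdV type whose cubic--dispersive balance is $\varepsilon^{4/3} \sim R^2$, and inverting this yields $\lambda_{\mathrm{crit}} - \lambda_* \gtrsim R^{4/3}$. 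Exponential localization of $w_*$ and $|\nabla w_*|$ in $\xi$ then follows from an Agmon-type weighted energy estimate: away from the crest $\{\xi = 0\}$, where $w_*$ is small, \eqref{ter-krikorov interior eq} linearizes to a uniformly elliptic operator whose spectral gap exceeds a positive constant times $\lambda_{\mathrm{crit}} - \lambda_* \gtrsim R^{4/3}$ uniformly in $L$, producing a decay rate $\gtrsim R^{2/3}$. Strict positivity $w_* > 0$ off $\{\zeta = -1\}$ and strict monotonicity in $\xi$ are then deduced from the Hopf boundary lemma and the strong maximum principle applied to this linear operator, whose coefficients are controlled by the non-stagnation bound already established.

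The principal obstacle is the tension between the very low regularity allowed for $\mathring{\rho}$ (merely $L^\infty$, with arbitrarily placed jump discontinuities in $(-1,0)$) and the nonlinear degeneracy of $\mathcal{D}$ as $w_\zeta \to -1$. These features together force working within a variational framework coupled with quasilinear elliptic regularity theory of De Giorgi--Nash--Moser type rather than classical Schauder theory, and ultimately dictate both the smallness of $R_{\max}$ and the minimum period $L_{\min}$ (the latter chosen so that $\lambda_{\mathrm{crit}}(\mathring{\rho})$ in \eqref{def lambda crit} is bounded below uniformly and so that the KdV-type scale $\varepsilon^{-2/3}$ fits inside a single period).
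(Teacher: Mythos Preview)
The paper does not prove this theorem: it is stated as a result of Turner \cite{turner1984variational} and quoted without proof. There is therefore no proof in the paper to compare your proposal against. Your sketch is broadly in the spirit of Turner's original variational argument, and several of the ingredients you name (constrained minimization, symmetrization for evenness and monotonicity, De Giorgi--Nash--Moser estimates to handle the merely $L^\infty$ coefficient $\mathring{\rho}$, a KdV-type long-wave test function to extract the $R^{4/3}$ exponent) are indeed the right ones.

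That said, your proposal contains a genuine slip: you write that you would ``minimize $\mathcal{D}$ on the level set $\{\mathcal{D}(w)=R^2/2\}$,'' which is vacuous---one cannot minimize a functional over its own level set. The correct variational principle is to extremize the quadratic functional $\mathcal{K}$ (the denominator of the Rayleigh quotient \eqref{def lambda crit}) subject to the constraint $\mathcal{D}(w)=R^2/2$, or equivalently to minimize $\mathcal{D}$ on a level set of $\mathcal{K}$; the Lagrange multiplier is then $\lambda$. This is not merely cosmetic, since the direction of the constraint determines which functional must be weakly lower semicontinuous and which must be weakly continuous, and hence governs the compactness argument. A second, smaller gap: to obtain the uniform bound $\|w_\zeta\|_{L^\infty}\le 1/2$ along a minimizing sequence you cannot invoke De Giorgi--Nash--Moser estimates for \eqref{ter-krikorov interior eq} directly, because elements of a minimizing sequence need not satisfy the equation at all. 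Turner addresses this by replacing $\mathcal{D}$ with a penalized functional (built from $a$ in \eqref{def penalized a}) that agrees with $\mathcal{D}$ below the cutoff and is globally coercive; the \emph{a priori} estimates are then proved for critical points of the penalized problem, and smallness of $R$ ensures the minimizer falls below the cutoff. This penalization step is precisely what the present paper borrows in \S\ref{continuity penalized section}--\S\ref{a priori section}.
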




\section{Continuous dependence on the density in the Ter-Krikorov formulation} \label{continuity section}
\subsection{Overview}  In this section, we prove that small-amplitude periodic traveling waves of elevation localized near the crest depend continuously on the streamline density function.  Stated in terms of the Ter-Krikorov formulation, the main result is the following. 

\begin{thm} \label{main continuity theorem}  
Let $\mathring{\rho}_* \in C^{1,\alpha}([-1,0])$ be a stable rescaled streamline density function with $\mathring{\rho}_*(0) = 1$.  There exists $S_{\mathrm{max}} > 0$ such that, for any non-laminar solution $(w_*, \mathring{\rho}_*, \lambda_*)$ of Problem \ref{ter-krikorov prob} that is a strict wave of elevation \eqref{def strict wave of elevation}, and satisfies $\| \nabla w_* \|_{L^\infty} < S_{\mathrm{max}}$, the following is true.  There is a constant $\mathring{\rho}_{\max} > 0$, and a neighborhood $\mathring{\mathcal{U}}$ of $\mathring{\rho}_*$ in $L^\infty([-1,0])$ such that, for any stable rescaled streamlined density function
\be  \mathring{\rho} \in \mathring{\mathcal{U}} \cap  W^{1,\infty}([-1, \zeta_1]) \cap \cdots \cap W^{1,\infty}([\zeta_{N-1}, 0]) \label{continuity reg of mathringrho} \ee
with $\mathring{\rho}(0) = 1$, and $\mathring{\rho}(-1) \leq \mathring{\rho}_{\max}$,  there exists 
$$w \in W_{\mathrm{per}}^{1,r}(\mathcal{S}) \cap W_{\mathrm{per}}^{2,r}(\mathcal{S}_1) \cap \cdots \cap W_{\mathrm{per}}^{2,r}(\mathcal{S}_N)$$
with $(w, \mathring{\rho}, \lambda)$ solving Problem \ref{ter-krikorov prob}. Moreover,
$$\| w - w_* \|_{C^{0,\alpha}(\overline{\mathcal{S}})} \leq C\| \mathring{\rho} - \mathring{\rho}_* \|_{L^\infty},$$
for a constant $C > 0$ independent of $\mathring{\rho}$.  
  \end{thm}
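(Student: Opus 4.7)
The plan is to construct the mapping $\mathring{\rho} \mapsto w$ via the implicit function theorem applied to a \emph{penalized} Ter-Krikorov operator, and then show \emph{a posteriori} that the penalization is transparent for layer-wise $W^{1,\infty}$ densities. The core difficulty is that $\mathring{\rho}$ may have arbitrarily many jump discontinuities in an $L^\infty$ neighborhood of $\mathring{\rho}_*$, so one cannot hope to bound $\nabla w$ in $L^\infty$ via elliptic regularity, and therefore cannot directly enforce the no-stagnation condition $w_\zeta > -1$ inside the IFT argument.

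The first step is to fix a smooth cutoff $\chi$ with $\chi(s) \equiv 1$ for $s \leq S_{\mathrm{max}}$ and $\chi(s) \equiv 0$ for $s \geq 2 S_{\mathrm{max}}$, and modify the quasilinear factors in \eqref{ter-krikorov eq} so that the resulting operator $F_{\mathrm{pen}}(w, \mathring{\rho}, \lambda)$ reduces to the linear divergence expression $-\nabla \cdot (\mathring{\rho} \nabla w)$ (with the lower-order $\lambda$-terms retained) whenever $|\nabla w| \geq 2 S_{\mathrm{max}}$ and coincides with the physical operator whenever $|\nabla w| \leq S_{\mathrm{max}}$. The hypothesis $\|\nabla w_*\|_{L^\infty} < S_{\mathrm{max}}$ guarantees that the penalization is transparent at $(w_*, \mathring{\rho}_*, \lambda_*)$.

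Next, compute $D_{(w,\lambda)} F_{\mathrm{pen}}(w_*, \mathring{\rho}_*, \lambda_*)$ acting on the even-in-$\xi$ subspace of $W^{1,r}_{\mathrm{per}}(\mathcal{S})$ with vanishing trace on $\{\zeta=-1\}$, augmented by the scalar $\lambda$. Its principal part is $-\nabla \cdot (\mathring{\rho}_* \nabla \cdot)$, together with a Steklov-type condition on $\{\zeta = 0\}$, and the $w_*$-dependent lower-order terms inherit a favorable sign from $w_* > 0$ in the interior. The Rayleigh-quotient characterization \eqref{def lambda crit} combined with the strict supercriticality bound \eqref{supercritical bound} of $\lambda_*$ ensures that this linearization has trivial kernel on the even subspace, while inclusion of $\lambda$ among the unknowns supplies the one extra degree of freedom needed to make it an isomorphism onto the natural target space. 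The classical implicit function theorem then yields an $L^\infty$ neighborhood $\mathring{\mathcal{U}}$ of $\mathring{\rho}_*$ and a Lipschitz map $\mathring{\rho} \mapsto (w, \lambda)$ solving $F_{\mathrm{pen}} = 0$; the Hölder estimate in the theorem follows by composition with the Sobolev embedding $W^{1,r}_{\mathrm{per}}(\mathcal{S}) \hookrightarrow C^{0,\alpha}(\overline{\mathcal{S}})$.

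It remains to show that the penalization is inactive when $\mathring{\rho}$ is layer-wise $W^{1,\infty}$, so that $w$ genuinely solves Problem \ref{ter-krikorov prob}. For smooth $\mathring{\rho} \in \mathring{\mathcal{U}}$ this is classical: Schauder theory applied to $F_{\mathrm{pen}} = 0$ upgrades $w$ to $C^{1,\alpha}(\overline{\mathcal{S}})$ with $\|\nabla w\|_{L^\infty}$ controlled by $\|\nabla w_*\|_{L^\infty} + \mathcal{O}(\|\mathring{\rho} - \mathring{\rho}_*\|_{L^\infty})$, hence below $S_{\mathrm{max}}$ once $\mathring{\mathcal{U}}$ is small enough. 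For layer-wise $W^{1,\infty}$ densities, mollify within each layer to obtain smooth $\mathring{\rho}_n \in \mathring{\mathcal{U}}$ with uniform Lipschitz bounds on each closed subinterval and $\mathring{\rho}_n \to \mathring{\rho}$ in every $L^p$ with $p < \infty$; the Lipschitz continuity of the penalized solution map, combined with layer-wise interior Schauder estimates, yields uniform $C^{1,\alpha}(\overline{\mathcal{S}_i})$ bounds on $w_n$ that pass to the limit in \eqref{ter-krikorov interior eq} and \eqref{ter-krikorov conormal cond}. The main obstacle throughout is the uniform a priori control of $\|\nabla w\|_{L^\infty}$: the IFT output lives only in $W^{1,r}$, which fails to embed into $W^{1,\infty}$ in two dimensions, and across an interface $\nabla w$ genuinely jumps, so a global $C^1$ estimate is out of reach. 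Resolving this requires exploiting the wave-of-elevation hypothesis (to pin down the sign of $w$ via a maximum principle) and the exponential localization of $w_*$ from Theorem \ref{Turner main theorem} (to confine the $\mathring{\rho}$-perturbation to a finite region), while keeping the layer-wise Schauder constants uniform as the number of layers grows without bound---the essential ingredient for $\mathring{\mathcal{U}}$ to accommodate densities with arbitrarily many jumps.
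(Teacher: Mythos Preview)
Your overall architecture matches the paper's: penalize so the problem is uniformly elliptic regardless of $w_\zeta$, run the implicit function theorem to get a Lipschitz map $\mathring{\rho}\mapsto w$ into $W^{1,r}_{\mathrm{per}}$, then argue \emph{a posteriori} that the penalization is inactive when $\mathring{\rho}$ is layer-wise smooth.  Two points deserve comment.

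First, a minor divergence: you fold $\lambda$ into the unknowns and restrict to the even subspace in order to make the linearization an isomorphism.  In the paper $\lambda$ is simply a parameter in the IFT; the linearization $\mathcal{G}_w(w_*,\mathring{\rho}_*,\lambda_*)$ is shown to be Fredholm of index $0$ (via the Dong--Kim $L^r$ theory for divergence-form operators with coefficients merely bounded in $\zeta$), and the Rayleigh-quotient argument you allude to already gives \emph{trivial} kernel because $\lambda_*<\lambda_{\mathrm{crit}}$.  No extra degree of freedom is needed, and evenness is recovered afterward by the uniqueness statement in the IFT.

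Second, and this is the real gap: the step ``Schauder theory applied to $F_{\mathrm{pen}}=0$ upgrades $w$ to $C^{1,\alpha}$ with $\|\nabla w\|_{L^\infty}\le \|\nabla w_*\|_{L^\infty}+\mathcal{O}(\|\mathring{\rho}-\mathring{\rho}_*\|_{L^\infty})$'' does not hold with a constant independent of $\mathring{\rho}'$.  Schauder constants for the difference equation depend on the $C^{0,\alpha}$ norm of the coefficients, hence on $\|\mathring{\rho}'\|_{L^\infty}$, which blows up along any sequence $\mathring{\rho}_n\to\mathring{\rho}$ approximating a jump.  Your final paragraph correctly identifies that uniformity in the number of layers is the crux, but the mechanism you propose (layer-wise interior Schauder) cannot furnish it: when you mollify $\mathring{\rho}$ globally to get smooth $\mathring{\rho}_n$, there are no layers to work in, and when you mollify layer-by-layer, $\mathring{\rho}_n$ is not globally smooth so the previous step does not apply.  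The paper resolves this with a chain of \emph{a priori} estimates (its \S3.3) whose constants depend only on $\lambda$, $\mathring{\rho}(0)$, $\mathring{\rho}(-1)$, and $\|\nabla w\|_{L^2}$ --- never on $\mathring{\rho}'$.  The key step is a comparison-principle argument for $w_\zeta$: because $w\ge 0$ (wave of elevation), $w$ is a supersolution of the quasilinear operator $\mathcal{Q}$, and one constructs explicit barrier functions living in the penalized (hence linear) regime to trap $w_\zeta$ pointwise.  That comparison, together with energy estimates on $\nabla w_\xi$ and $\nabla w_{\xi\xi}$ obtained by differentiating in the translation-invariant direction, is what produces an $L^\infty$ bound on $\nabla w$ uniform in $\mathring{\rho}'$ and hence survives the limit $\mathring{\rho}_n\to\mathring{\rho}$.
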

\begin{remark}  In light of Theorem \ref{Turner main theorem}, we know that solutions $(w_*, \mathring{\rho}, \lambda_*)$ meeting the hypotheses exist.  Moreover, for Turner's solutions, one may replace the bound on $\| \nabla w_* \|_{L^\infty}$ with one on the energy:  There exists $R_0 > 0$ such that, for any non-laminar solution $(w_*, \mathring{\rho}_*, \lambda_*)$ of Problem \ref{ter-krikorov prob} furnished by Theorem \ref{Turner main theorem} with $R < R_0$, the conclusion of the above theorem holds.  This follows from the \emph{a priori} estimates of \S\ref{a priori section}, and the analogous ones in \cite{turner1984variational}).  We have taken $\mathring{\rho}_*(0) = \mathring{\rho}(0) = 1$ in order to simplify slightly some of the arguments, it is not essential.  
\end{remark}

Notice that the neighborhood of $\mathring{\rho}_*$ includes densities with an \emph{arbitrary} number of jump discontinuities.     To appreciate the implications of this, it is most convenient to re-express the Ter-Krikorov equation \eqref{ter-krikorov eq} in a more compact form:
\be  \begin{split}
\nabla \cdot \left( \mathring{\rho} \mathbf{F}(\nabla w) \right) - \lambda \partial_\zeta ( \mathring{\rho} w) + \lambda \mathring{\rho} \partial_\zeta w &= 0  \qquad \textrm{in } \mathcal{S} \\
w & = 0 \qquad \textrm{on } \{ \zeta = -1\} \\
\mathring{\rho} F_2(\nabla w) - \lambda \mathring{\rho} w &= 0 \qquad  \textrm{on } \{ \zeta = 0 \}.  \end{split}  \label{ter-krikorov eq2} \ee
Here the divergence and gradient are with respect to the $(\xi, \zeta)$ variable, and 
$$\mathbf{F} = (F_1, F_2) = (\partial_1 f, \partial_2 f),$$
 with $f: \mathbb{R}^2 \to \mathbb{R}$ defined by
\be f(p_1 , p_2) := \frac{p_1^2 + p_2^2}{2(1+p_2)}. \label{def f} \ee
Note that we are using $(p_1, p_2)$ as dummy variables; they have no connection to $p_0,\ldots, p_{N-1}$ introduced in \S\ref{stream function formulation section}.

Written this way, it is clear that \eqref{ter-krikorov eq2} is a quasilinear elliptic problem, with ellipticity constant related to the lower bound of $1+ w_\zeta$, and that the boundary condition on the air--sea interface $\{ \eta = 0 \}$ is of co-normal derivative type.  Suppose that we have a solution $(w_*, \mathring{\rho}_*, \lambda_*)$.  To establish the continuous dependence, we employ an implicit function theorem, attempting to find a curve of nearby solutions $\{(w, \mathring{\rho})\}$ parameterized by $\mathring{\rho}$.  However, because we cannot know \emph{a priori} the location of the layers, we may only assume that $\| \mathring{\rho} - \mathring{\rho}_* \|_{L^\infty} \ll 1$.   Heuristically, elliptic regularity would then provide control of $w$ in the space $W_{\textrm{per}}^{1,r}(\mathcal{S})$.  This is not enough: we must have that $w$ lies in a $W_{\textrm{per}}^{1,\infty}(\mathcal{S})$ neighborhood of $w_*$ in order to guarantee $1+w_\zeta > 0$.

To circumvent this issue, we replace the physical problem \eqref{ter-krikorov eq2} with a penalized problem that is elliptic for any $w \in W^{1,r}$ and agrees with \eqref{ter-krikorov eq2} for $w$ with energy below a certain bound.  In \S\ref{continuity penalized section}, we carry out the implicit function theorem scheme to get continuous dependence on the density for the penalized problem.  We then derive \emph{a priori} estimates in \S\ref{a priori section} for solutions of the penalized problem in terms of their energy and $\| \mathring{\rho}\|_{L^\infty}$.  Finally, in \S\ref{continuity proof section}, these estimates enable us to return to the physical problem, proving Theorem \ref{main continuity theorem}.

\subsection{Continuity for a penalized problem} \label{continuity penalized section}
Following Turner \cite{turner1981internal,turner1984variational}, consider the following penalization scheme.  Let $\Phi \in C_c^{\infty}(\mathbb{R})$ be a cutoff function such that 
$$
0 \leq \Phi \leq 1, \qquad \supp{\Phi} \subset (-3/2,3/2), \qquad \Phi \equiv 1 \textrm{ on } [-1, 1],
$$
and for each $s > 0$, define $\Phi_s := \Phi(\cdot/s)$.  We replace $f$ defined in \eqref{def f} with 
\be  a(p_1, p_2;s) := \Phi_s(p_1^2 + p_2^2) f(p_1, p_2) + \left[1-\Phi_s(p_1^2 + p_2^2)\right] \frac{p_1^2 + p_2^2}{2}. \label{def penalized a} \ee
To keep notation manageable, the dependence of $a$ on $s$ will be suppressed. Note that for $w \in W_{\textrm{per}}^{1,r}(\mathcal{S})$,  
$$\Phi_s(|\nabla w|^2) \frac{1}{1+\partial_\zeta w}  ,~ \Phi_s^\prime(|\nabla w|^2) \frac{1}{1+\partial_\zeta w}  \in L^q(\mathcal{S}), \qquad \textrm{for all $q \in [1, \infty]$}.$$
We are therefore justified in looking for weak solutions $w \in W_{\textrm{per}}^{1,r}(\mathcal{S})$ of the penalized problem
\be 
\left\{ \begin{array}{ll} 
\nabla \cdot \left( \mathring{\rho} (\nabla a)(\nabla w) \right) - \lambda \partial_\zeta ( \mathring{\rho} w) + \lambda \mathring{\rho} \partial_\zeta w = 0 & \textrm{in } \mathcal{S} \\
w = 0 & \textrm{on } \{ \zeta = -1\} \\
\mathring{\rho} (\partial_2 a)(\nabla w) - \lambda \mathring{\rho} w = 0 & \textrm{on } \{ \zeta = 0 \}. \label{Turner penalized problem} \end{array} \right. \ee
where $\mathring{\rho} \in L^\infty([-1,0]) \subset L^r([-1,0])$.  

Observe that, if $\| \nabla w \|_{L^\infty}^2 < 2s$, then $a(\nabla w) = f(\nabla w)$, and hence a solution of \eqref{Turner penalized problem} solves \eqref{ter-krikorov eq2}.  However, for $\| \nabla w\|_{L^\infty}$ large, $a(\nabla w) = |\nabla w|^2/2$.  One can therefore prove the following lemma characterizing the ellipticity of the penalized problem.  
\begin{lemma}[Turner, \cite{turner1981internal}] \label{ellipticity lemma} There exists a constant $s_0 \in (0, 1/\sqrt{2})$ such that, for all $s \in (0, s_0)$, there are constants $\sigma_1, \ldots, \sigma_5, \nu > 0$ such that the following hold.
\begin{itemize}
\item[(i)] $\frac{1}{2} \sigma_1(p_1^2 + p_2^2) \leq a(p_1, p_2) \leq \frac{1}{2} \sigma_2 (p_1^2 + p_2^2)$ .
\item[(ii)] $\sigma_3(p_1^2 + p_2^2) \leq (\nabla a)(p_1, p_2) \cdot (p_1, p_2) \leq \sigma_4 (p_1^2 + p_2^2)$.
\item[(iii)] $|\nabla a(p_1, p_2)|^2 \leq \sigma_5 (\nabla a)(p_1, p_2) \cdot (p_1, p_2)$.
\item[(iv)] $\sum_{ij} a_{ij}(p_1, p_2) \xi_i \xi_j \geq \nu( \xi_1^2 + \xi_2^2)$, for all $(p_1, p_2), (\xi_1, \xi_2) \in \mathbb{R}^2$.
\item[(v)] $\nabla \partial_2 a = (0,1) + \mathcal{O}(s)$
\item[(vi)] $(\partial_i \partial_j \partial_k a)(p_1, p_2) = (\partial_i \partial_j \partial_k a)(0, 0) + \mathcal{O}(s)$.  Moreover, at $(0,0)$, $a_{111} = 0$, $a_{112} = 2$, $a_{122} = 0$, $a_{222} = 6$, and the rest are determined by symmetry.  
\item[(vii)] $\sigma_1, \ldots, \sigma_5, \nu = 1+\mathcal{O}(s).$
\end{itemize}
Here we are using the shorthand $a_{i} = \partial_i a$, where $\partial_1 := \partial_\xi$, $\partial_2 := \partial_\xi$, and similarly for $a_{ij}$, $a_{ijk}$.  
\end{lemma}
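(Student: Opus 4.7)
The strategy is to analyze $a(\cdot,\cdot;s)$ on the three subregions of $\mathbb{R}^2$ determined by $\Phi_s(|p|^2)$: the \emph{interior zone} $\{|p|^2\leq s\}$, where $\Phi_s\equiv 1$ so $a = f$; the \emph{exterior zone} $\{|p|^2\geq 3s/2\}$, where $\Phi_s\equiv 0$ so $a(p) = |p|^2/2$ exactly; and the \emph{transition zone} $\{s \leq |p|^2 \leq 3s/2\}$. On the exterior zone every claim follows immediately with unit constants, and (vi) is already captured by the interior zone since $a\equiv f$ in a neighborhood of the origin. The core of the proof is the unified analysis of the interior and transition zones.

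I would first Taylor-expand
\[
f(p_1,p_2) \;=\; \frac{p_1^2+p_2^2}{2(1+p_2)} \;=\; \frac{|p|^2}{2}\sum_{k\geq 0}(-p_2)^k
\]
to read off $\nabla f(0) = 0$, $\nabla^2 f(0) = I$, and the explicit third partial derivatives required for (vi). The natural auxiliary object is the closed-form remainder
\[
\psi(p) \;:=\; f(p)-\tfrac{1}{2}|p|^2 \;=\; -\,\frac{p_2\,|p|^2}{2(1+p_2)},
\]
which satisfies $|\psi|=\mathcal{O}(|p|^3)$, $|\nabla\psi|=\mathcal{O}(|p|^2)$, and $|\nabla^2\psi|=\mathcal{O}(|p|)$ in any fixed neighborhood of $0$. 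In terms of $\psi$, the penalized function takes the convenient form $a = \tfrac{1}{2}|p|^2+\Phi_s(|p|^2)\,\psi(p)$ globally on $\mathbb{R}^2$. On the interior zone $\Phi_s\equiv 1$, so $a = \tfrac{1}{2}|p|^2 + \psi$, and each of (i)--(iv) reduces to the observation that $\psi$ perturbs the model function $|p|^2/2$ and its derivatives by $\mathcal{O}(\sqrt{s})$ amounts, which is harmless once $s_0$ is chosen small; (v) follows from $\nabla\partial_2 f(0)=(0,1)$.

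The main obstacle is establishing uniform ellipticity (iv) on the transition zone, where the high derivatives $\Phi_s^{(k)} = \mathcal{O}(s^{-k})$ could in principle blow up the Hessian of $a$. Expanding $\nabla^2 a = I + \nabla^2\bigl[\Phi_s(|p|^2)\psi(p)\bigr]$ via the product rule and bounding each resulting term with the transition-zone scalings $|p|=\mathcal{O}(\sqrt{s})$, $|\psi|=\mathcal{O}(s^{3/2})$, $|\nabla\psi|=\mathcal{O}(s)$, $|\nabla^2\psi|=\mathcal{O}(\sqrt{s})$, $|\Phi_s'|=\mathcal{O}(s^{-1})$, $|\Phi_s''|=\mathcal{O}(s^{-2})$ is the delicate step: the seemingly dangerous contributions $\Phi_s''(|p|^2)(2p)\otimes(2p)\,\psi$ and $2p\otimes\bigl(\Phi_s'(|p|^2)\nabla\psi\bigr)$ are controlled only because $\psi$ vanishes cubically, and every term that appears is in fact $\mathcal{O}(\sqrt{s})$. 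This yields $\nabla^2 a = I + \mathcal{O}(\sqrt{s})$ globally, establishing (iv) with $\nu = 1+\mathcal{O}(\sqrt{s})$; analogous but simpler scaling calculations deliver (i)--(iii) and the $\mathcal{O}(s)$-corrections in (v), and (vii) collects the resulting quantitative rates. The whole proof is elementary once the transition-zone bookkeeping is organized.
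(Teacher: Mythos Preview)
The paper does not supply its own proof of this lemma; it is quoted from Turner \cite{turner1981internal} and used as a black box.  Your zone decomposition together with the representation $a=\tfrac12|p|^2+\Phi_s(|p|^2)\psi(p)$ and the cubic vanishing of $\psi$ is exactly the natural argument, and the transition-zone bookkeeping you describe is correct: each term in $\nabla^2\bigl[\Phi_s(|p|^2)\psi\bigr]$ is indeed $\mathcal{O}(\sqrt{s})$ once one pairs the $s^{-k}$ growth of $\Phi_s^{(k)}$ against the $s^{3/2}$, $s$, $\sqrt{s}$ decay of $\psi$, $\nabla\psi$, $\nabla^2\psi$.

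Two points deserve tightening.  First, your own analysis yields $\nabla^2 a = I+\mathcal{O}(\sqrt{s})$, hence $\nu=1+\mathcal{O}(\sqrt{s})$ and $\nabla\partial_2 a=(0,1)+\mathcal{O}(\sqrt{s})$; you then assert the ``$\mathcal{O}(s)$-corrections in (v)'' and that (vii) follows, but the rate you actually obtain is $\sqrt{s}$, not $s$.  (Indeed, already in the interior zone where $a=f$, one has $\partial_1^2 f=1/(1+p_2)=1-p_2+\cdots$, which at $|p|\sim\sqrt{s}$ deviates from $1$ by order $\sqrt{s}$.)  This is harmless for every later use in the paper---only smallness for $s$ small is ever invoked---but you should either state $\mathcal{O}(\sqrt{s})$ consistently or explain why the sharper $\mathcal{O}(s)$ holds.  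Second, your dismissal of (vi) as ``captured by the interior zone'' is too quick: (vi) is a global statement, and on the exterior zone all third derivatives of $a$ vanish while $a_{ijk}(0,0)$ need not, so the literal equality $a_{ijk}(p)=a_{ijk}(0,0)+\mathcal{O}(s)$ cannot hold uniformly in $p$.  What your scaling argument does give (and what the paper actually uses, cf.\ \eqref{bound a_ijk}) is the uniform bound $\|a_{ijk}\|_{L^\infty}\leq C$ independent of $s$: the worst transition-zone term $\Phi_s'''(|p|^2)(2p)^{\otimes 3}\psi$ scales like $s^{-3}\cdot s^{3/2}\cdot s^{3/2}=\mathcal{O}(1)$.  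You should state (vi) in that form.
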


The penalized problem \eqref{Turner penalized problem} can be stated abstractly as 
\be \mathcal{G}(w, \mathring{\rho}, \lambda) = 0,\label{abstract penalized problem} \ee
where $\mathcal{G} : X_1 \times X_2  \times \mathbb{R} \to Y$, 
\begin{align*} X_1 & :=  \{ w \in W_{\textrm{per}}^{1,r}({\mathcal{S}}) : w = 0 \textrm{ on } \{ \zeta = -1\}  \} \\
X_2 &:=  L^\infty([-1,0]) \\
 Y &:= \{ u \in (W_{\textrm{per}}^{1,r'}(\mathcal{S}))^* : u = \mathcal{A} + \partial_\xi \mathcal{B}_1  + \partial_\zeta \mathcal{B}_2, \textrm{ for } \mathcal{A} , \mathcal{B}_1, \mathcal{B}_2 \in L^r({\mathcal{S}}) \},  \end{align*}
and, for each test function $\Psi \in W^{1,r^\prime}_{\textrm{per}}(\mathcal{S})$, 
\be \begin{split}
\langle \mathcal{G}(w, \mathring{\rho}, \lambda), \, \Psi \rangle & := -\int_{\mathcal{S}} [ \mathring{\rho} \nabla \Psi \cdot (\nabla a)(\nabla w) - \lambda \mathring{\rho} w \partial_\zeta \Psi] \, d\xi \, d\zeta - \int_{\mathcal{S}} \lambda \mathring{\rho} (\partial_\zeta w) \Psi \, d\xi \, d\zeta.
\end{split}  \label{Fw formulas} \ee
Here $r'$ is the H\"older conjugate exponent of $r$, and $\langle \cdot, \cdot \rangle$ denotes the pairing of $(W_{\textrm{per}}^{1,r'}(\mathcal{S}))^*$ with $W_{\textrm{per}}^{1,r'}(\mathcal{S})$.  We topologize $Y$ by endowing it with the norm
$$ \| u \|_{Y} := \inf\{ \| \mathcal{A} \|_{L_{\textrm{per}}^r} + \|  \mathcal{B}_1 \|_{L_{\textrm{per}}^r} + \|  \mathcal{B}_2 \|_{L_{\textrm{per}}^r} : u = \mathcal{A} + \partial_\xi \mathcal{B}_1 + \partial_\zeta \mathcal{B}_2,~\mathcal{A}, \mathcal{B}_i \in L_{\textrm{per}}^r(\mathcal{S}) \}.$$
It is elementary to see that it is a Banach space.  Moreover, the following simple technical lemma holds.

\begin{lemma} \label{compact embedding lemma} 
The space $X_1$ is compactly embedded in $Y$ in the sense that the identification mapping $I : X_1 \to Y$, defined by  
$$ \langle I(v), \Psi \rangle := \int_\mathcal{S}v \Psi \, d\xi \, d\zeta, \qquad \textrm{for all }  v \in X_1, \, \Psi \in W_{\mathrm{per}}^{1,r'}(\mathcal{S}),$$
is compact.
\end{lemma}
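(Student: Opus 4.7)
The plan is to factor the identification map $I$ through the natural Sobolev embedding into $L^r$ and then use Rellich--Kondrachov compactness on a fundamental period.

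First I would observe that every $v \in L_{\mathrm{per}}^r(\mathcal{S})$ determines a distribution in $Y$ through the representation $\mathcal{A} = v$, $\mathcal{B}_1 = \mathcal{B}_2 = 0$. Denote by $J : L_{\mathrm{per}}^r(\mathcal{S}) \to Y$ the resulting linear map. By the very definition of the norm on $Y$,
\[
\|J(v)\|_{Y} \leq \|v\|_{L_{\mathrm{per}}^r(\mathcal{S})},
\]
so $J$ is bounded. A quick computation against any test function $\Psi \in W_{\mathrm{per}}^{1,r'}(\mathcal{S})$ confirms that $\langle J(v), \Psi\rangle = \int_{\mathcal{S}} v \Psi \, d\xi \, d\zeta$, which is exactly $\langle I(v), \Psi\rangle$ when $v \in X_1$. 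Hence $I = J \circ E$, where $E : X_1 \hookrightarrow L_{\mathrm{per}}^r(\mathcal{S})$ is the natural embedding.

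Next I would argue that $E$ is compact. Because everything is $2L/d$-periodic in $\xi$, working on a single period reduces the question to the compactness of the Sobolev embedding $W^{1,r}(\mathcal{S}_0) \hookrightarrow L^r(\mathcal{S}_0)$, where $\mathcal{S}_0 := (-L/d, L/d) \times (-1, 0)$ is bounded with Lipschitz boundary. This is precisely the Rellich--Kondrachov theorem; in fact, since $r = 2/(1-\alpha) > 2$ in two dimensions, Morrey's inequality yields the stronger compact embedding $W^{1,r} \hookrightarrow\hookrightarrow C^{0,\alpha}$, which in particular implies compactness into $L^r$. The Dirichlet condition at $\{\zeta = -1\}$ is preserved under strong convergence, so $E$ indeed maps into $X_1$'s target space.

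Finally, since a compact operator composed with a bounded operator is compact, $I = J \circ E$ is compact, completing the proof. There is no real obstacle here; the only point worth verifying carefully is that the representation used to estimate $\|I(v)\|_Y$ is admissible in the definition of $Y$ and that the infimum defining $\|\cdot\|_Y$ is taken over a set containing $(\mathcal{A}, 0, 0) = (v, 0, 0)$, which is immediate from $v \in L^r_{\mathrm{per}}(\mathcal{S})$.
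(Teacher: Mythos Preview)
Your proof is correct and follows essentially the same approach as the paper: factor $I$ through the compact embedding $W_{\mathrm{per}}^{1,r}(\mathcal{S}) \hookrightarrow\hookrightarrow L_{\mathrm{per}}^r(\mathcal{S})$ and then use the trivial bound $\|I(v)\|_Y \leq \|v\|_{L_{\mathrm{per}}^r}$. The only cosmetic difference is that the paper obtains the compact embedding into $L^r$ by passing through an intermediate exponent $\tilde r < r$ with $2\tilde r/(2-\tilde r) > r$ and invoking Rellich--Kondrachov, whereas you invoke Morrey's compact embedding into $C^{0,\alpha}$; both routes are valid since $r>2$ in two dimensions.
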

\begin{proof}   Since $r > 2$, we may choose $\tilde r < r$ satisfying $2\tilde{r}/(2-\tilde{r}) > r$.  Then, by the Rellich-Kondrachov theorem, $W_{\textrm{per}}^{1,{r}}(\mathcal{S}) \subset W_{\textrm{per}}^{1,\tilde{r}}(\mathcal{S}) \subset \subset L_{\textrm{per}}^r(\mathcal{S}).$
The first inclusion is simply due to the fact that we are on a compact domain.     

Now, if $\{ v_n \} \subset X_1$ is a bounded sequence, it follows that, modulo a subsequence, $v_n \to v$ in $L_{\textrm{per}}^r(\mathcal{S})$ for some $v$.  Likewise, 
$$ \| I(v_n) - I(v) \|_{Y} \leq \| v_n - v\|_{L_{\textrm{per}}^r(\mathcal{S})} \to 0.$$
Hence $\{ I(v_n) \}$ has {a} convergent subsequence, and the proof is complete.  
\end{proof}

Fix a rescaled streamline density function $\mathring{\rho}_* \in C^{1,\alpha}([-1,0])$.  By Theorem \ref{Turner main theorem}, we know that for any $R \in (0, R_{\textrm{max}})$ and $L$ sufficiently large, there exists $(w_*, \mathring{\rho}_*, \lambda_*)$ such that $w_*$ solves Problem \ref{ter-krikorov prob} with $\lambda = \lambda_*$, and streamline density function $\mathring{\rho}_*$. Moreover, $\lambda_*$ is supercritical in the sense of \eqref{def lambda crit}: $\lambda_* > \mu_* := \lambda_{\textrm{crit}}(\mathring{\rho}_*)$.  Thus, in particular,
$$ \mathcal{G}(w_*,\mathring{\rho}_*,\lambda_*) = 0.$$
We will apply the implicit function theorem in order to infer the existence of nearby solutions where the density is merely  $L^\infty$.   With that in mind, we compute that the Fr\'echet derivative $\mathcal{G}_w(w, \mathring{\rho}, \lambda) : X_1 \to Y$ applied to $u \in X_1$ and acting on a test function $\Psi$ is given by 
\be \begin{split} \label{computation of DG} 
\langle \mathcal{G}_{w}(w_*,\mathring{\rho}_*, \lambda_*) u, \Psi \rangle & = -\int_{\mathcal{S}} \mathring{\rho}_* \nabla \Psi \cdot [(D^2 f)(\nabla w_*) \nabla u ] \, d\xi \, d\zeta \\
& \qquad + \int_{\mathcal{S}} [ \lambda_* \mathring{\rho}_* u \partial_\zeta \Psi + \lambda_* \rho_* (\partial_\zeta u) \Psi ] \, d\xi \, d\zeta.
\end{split} \ee
%
Here $D^2 f$ denotes the Hessian matrix of $f$.  Note that because $\| \nabla w_* \|_{L^\infty}$ lies below the penalization cutoff, the penalized and physical problems coincide --- hence we may use $f$ in place of $a$ above.  Observe also that, owing to the regularity of $w_*$, $D^2 f(\nabla w_*)$ is of class $C_{\textrm{per}}^{0,\alpha}(\overline{\mathcal{S}})$.  

The main lemma is the following.  

\begin{lemma}[Null space] \label{null space lemma} Let $\mathring{\rho}_* \in {C^{1,\alpha}([-1,0])}$ be a stable streamline density function.  There exists $R_0 > 0$ such that, for any non-laminar solution $(w_*, \mathring{\rho}_*, \lambda_*)$ to Problem \ref{ter-krikorov prob} with 
\bse \label{assumptions on w_*} \be \int_{\mathcal{S}} \mathring{\rho}_* \nabla f(\nabla w_*) \, dx \, d\zeta < 2R_0,\label{small amplitude assumption} \ee
and 
\be \| \nabla w_* \|_{L^\infty} < s, \label{w_* unpenalized} \ee \ese
 we have that $\ker{\mathcal{G}_w(w_*, \mathring{\rho}_*, \lambda_*)}$  is trivial.
\end{lemma}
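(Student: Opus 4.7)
The plan is to obtain a contradiction by exploiting the strict spectral gap $\lambda_* < \mu_* := \lambda_{\mathrm{crit}}(\mathring{\rho}_*)$ guaranteed by the quantitative supercritical bound in Theorem \ref{Turner main theorem}, combined with the fact that $(D^2 f)(0,0) = I$ so that the linearization at a small-amplitude $w_*$ is a perturbation of the linearization at the trivial flow. Suppose $u \in X_1$ lies in $\ker \mathcal{G}_w(w_*, \mathring{\rho}_*, \lambda_*)$, and substitute $\Psi = u$ into the formula \eqref{computation of DG}; this is admissible because $X_1 \hookrightarrow W^{1,r'}_{\mathrm{per}}(\mathcal{S})$ (since $r>2$), and \eqref{w_* unpenalized} lets us replace $D^2 a$ by $D^2 f$ at $\nabla w_*$. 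The two terms linear in $\lambda_*$ combine as
$$ \lambda_* \int_{\mathcal{S}} \mathring{\rho}_* \bigl[\, u\,\partial_\zeta u + (\partial_\zeta u)\,u\, \bigr] \, d\xi\, d\zeta \; = \; \lambda_* \int_{\mathcal{S}} \mathring{\rho}_*\, \partial_\zeta(u^2)\, d\xi\, d\zeta, $$
and an integration by parts in $\zeta$ (using $u|_{\zeta=-1}=0$ and $\mathring{\rho}_*(0)=1$) converts the kernel equation into the identity
$$ \int_{\mathcal{S}} \mathring{\rho}_*\, \nabla u \cdot (D^2 f)(\nabla w_*)\,\nabla u\, d\xi\, d\zeta \; = \; \lambda_*\left[\int_{\{\zeta = 0\}} \mathring{\rho}_*\, u^2\, d\xi \;-\; \int_{\mathcal{S}} \mathring{\rho}_*'\, u^2\, d\xi\, d\zeta\right]. $$

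Next I will invoke the variational side. A direct computation with $f(p_1, p_2) = (p_1^2 + p_2^2)/[2(1+p_2)]$ yields $(D^2 f)(0,0) = I$, so by Taylor expansion there is $C>0$ with the pointwise matrix inequality $(D^2 f)(\nabla w_*) \geq (1 - C\|\nabla w_*\|_{L^\infty})\,I$ valid throughout $\mathcal{S}$, where again \eqref{w_* unpenalized} is crucial. Coupling this lower bound to the variational characterization \eqref{def lambda crit} of $\mu_*$ applied to the test function $u$ gives
$$ (1 - C\|\nabla w_*\|_{L^\infty}) \int_{\mathcal{S}} \mathring{\rho}_*\, |\nabla u|^2\, d\xi\, d\zeta \;\le\; \frac{\lambda_*}{\mu_*} \int_{\mathcal{S}} \mathring{\rho}_*\, |\nabla u|^2\, d\xi\, d\zeta. $$
Since $(w_*, \mathring{\rho}_*, \lambda_*)$ is non-laminar, the supercritical bound \eqref{supercritical bound} furnishes $\lambda_*/\mu_* \le 1 - CR^{4/3}$ for the energy parameter $R$ of $w_*$. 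If $R_0$ in \eqref{small amplitude assumption} is chosen small enough that the corresponding $\|\nabla w_*\|_{L^\infty}$ satisfies $C\|\nabla w_*\|_{L^\infty} < 1 - \lambda_*/\mu_*$, the displayed inequality forces $\int_{\mathcal{S}} \mathring{\rho}_*\, |\nabla u|^2 = 0$, whence $u$ is constant; the Dirichlet condition on $\{\zeta = -1\}$ then gives $u \equiv 0$.

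The main obstacle is calibrating these two smallness scales: the spectral gap $\mu_* - \lambda_*$ is only of order $R^{4/3}$, so the argument requires an a priori $L^\infty$-gradient bound on $w_*$ by a power of $R$ strictly lower than $4/3$. Such a bound is not part of the statement of Theorem \ref{Turner main theorem}, but should follow from standard quasilinear elliptic regularity applied to \eqref{ter-krikorov eq2} together with the $C^{1,\alpha}$ smoothness of $\mathring{\rho}_*$; indeed, the a priori estimates developed in \S\ref{a priori section} appear designed precisely for this purpose, which is why the lemma is predicated on an amplitude bound rather than being asserted for all Turner waves. A secondary point is the role of \eqref{w_* unpenalized}: without it, one would have to work with $D^2 a$, for which the normalization $(D^2 a)(0,0) = I$ persists but the control of $\|D^2 a\|$ outside the cutoff region degrades, and the comparison with $\mu_*$ would lose its quantitative character.
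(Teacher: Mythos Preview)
Your proof is correct and follows essentially the same approach as the paper: test the kernel equation against the kernel element $u$ itself, then compare the resulting quadratic form $\int \mathring{\rho}_* \nabla u \cdot (D^2 f)(\nabla w_*)\nabla u$ with the Rayleigh quotient defining $\mu_*$, using that $(D^2f)(\nabla w_*)$ is an $O(\|\nabla w_*\|_{L^\infty})$ perturbation of the identity and that $\lambda_* < \mu_*$. The paper packages the perturbation step as a homotopy $\mathscr{R}(t) := \int \mathring{\rho}_* \nabla u \cdot (D^2 f)(t\nabla w_*)\nabla u$ with $\mathscr{R}(1)=\lambda_*$ and $|\mathscr{R}'|\lesssim \|\nabla w_*\|_{L^\infty}$, which is equivalent to your pointwise matrix lower bound; the calibration concern you raise (balancing $\|\nabla w_*\|_{L^\infty}$ against the gap $\mu_*-\lambda_*$) is handled in the paper exactly as you anticipate, namely by appeal to the a priori estimates of \S\ref{a priori section} and the phrase ``for $\|\nabla w_*\|_{L^\infty}$ sufficiently small''.
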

\begin{proof}
Notice that the matrix $D^2 f(\nabla w_*)$ is a perturbation of the $2\times 2$ identity matrix.    With that in mind,  consider the following model problem:  
\begin{equation} \label{null space: model problem} \left\{ \begin{array}{ll} 
\nabla \cdot \left( \mathring{\rho}_* \nabla u \right) = \sigma  \mathring{\rho}^\prime_* u  & \textrm{in } \mathcal{S} \\
u = 0 & \textrm{on } \{ \zeta = -1\} \\
\mathring{\rho}_* \partial_\zeta u = \sigma u  & \textrm{on } \{ \zeta = 0\}. \end{array} \right. \end{equation}
In fact, \eqref{null space: model problem} is nothing but the linearization of \eqref{ter-krikorov eq2} about the trivial solution $w \equiv 0$.   Let $\mu_* := \lambda_{\textrm{crit}}(\mathring{\rho}_*)$ be given as in \eqref{def lambda crit}.  By Theorem \ref{Turner main theorem}, for $L$ sufficiently large, $R$ sufficiently small,  
\begin{equation} \label{null space: lambda < mu}  \lambda_* \leq \mu_* (1- CR^{4/3}) < \mu_*, \end{equation}
for some constant $C$ depending on $\mathring{\rho}_*$.  In particular,  $\lambda_*$ is \emph{not} a generalized eigenvalue of the model problem \eqref{null space: model problem}, and the gap between $\lambda_*$ and $\mu_*$ can be widened by taking $R \to 0$.

Now, for each $t \in [0,1]$, put $\mathbf{A}_t := D^2 f (t \nabla w)$ and define $L_t :X_1 \to Y$ by 
$$ \langle L_t u, \, \Psi \rangle :=  -\int_{\mathcal{S}} \mathring{\rho} \nabla \Psi \cdot  (\mathbf{A}_t \nabla u ) \, d\xi \, d\zeta + \int_{\mathcal{S}} [ \lambda_* \mathring{\rho}_* u \partial_\zeta \Psi + \lambda_* \rho_* (\partial_\zeta u) \Psi ] \, d\xi \, d\zeta, $$
for each test function $\Psi \in W_{\textrm{per}}^{1,r'}(\mathcal{S})$.  
It follows that $L_1 = \mathcal{G}_w(w_*, \mathring{\rho}_*, \lambda_*)$.  On the other hand, a simple computation confirms that $L_0$ corresponds to the operator associated to the model problem \eqref{null space: model problem}.
 
 Seeking a contradiction, suppose that $u_0 \not\equiv 0$ is an element of $\ker{\mathcal{G}_w(w_*,\mathring{\rho}_*,\lambda_*)}$.  Since $r > 2$, the H\"older conjugate $r' < 2$, and hence $W_{\textrm{per}}^{1,r}(\mathcal{S}) \subset W_{\textrm{per}}^{1,r'}(\mathcal{S})$.  We may therefore use $u_0$ as a test function to deduce that
$$ 0 = \langle L_1 u_0, u_0 \rangle = -\int_{\mathcal{S}} \mathring{\rho}_* \nabla u_0 \cdot \mathbf{A}_1 \nabla u_0 \, d\xi \, d\zeta + {\lambda_*} \int_{\{ \zeta = 0 \}} \mathring{\rho}_* u_0^2 \, d\xi - \lambda_* \int_{\mathcal{S}} \mathring{\rho}_*^\prime u_0^2 \, d\xi \, d\zeta.$$
Without loss of generality we may take 
$$ -\int_{\mathcal{S}} \mathring{\rho}_*^\prime u_0^2 \, d\xi \, d\zeta + \int_{\{ \zeta = 0\}} \mathring{\rho}_* u_0^2 \, dx  = 1,$$
and thus 
\be \lambda_* =  \int_{\mathcal{S}} \mathring{\rho}_* \nabla u_0 \cdot \mathbf{A}_1 \nabla u_0 \, d\xi \, d\zeta.\label{null space: lambda identity} \ee
Observe that by Lemma \ref{ellipticity lemma}, $\mathbf{A}_1$ is positive definite, and hence 
$ \| \nabla u_0 \|_{L_{\textrm{per}}^2(\mathcal{S})} \lesssim \lambda_*.$

Consider now the Rayleigh quotient  
$$ \mathscr{R}(t) := \int_{\mathcal{S}} \mathring{\rho}_* \nabla u_0 \cdot \mathbf{A}_t \nabla u_0 \, d\xi \, d\zeta, \qquad t \in [0,1].$$
By the definition of $u_0$ and identity \eqref{null space: lambda identity}, we have that 
\begin{equation} \label{null space: Q1 is lambda} \mathscr{R}(1) = \lambda_*. \end{equation}
Clearly $\mathscr{R}$ is $C^1([0,1])$ and 
$ \| \mathscr{R}^\prime \|_{L^\infty} \lesssim \| \nabla w_* \|_{L^\infty(\overline{\mathcal{S}})}. $
Thus, in light of \eqref{null space: Q1 is lambda} and \eqref{null space: lambda < mu}, for $\| \nabla w_* \|_{L^\infty}$ sufficiently small,  
$$  \mu_* > \mathscr{R}(0) =  \int_{\mathcal{S}} \mathring{\rho}_* |\nabla u_0|^2 \, d\xi \, d\zeta,$$
which violates the criticality of $\mu_*$.  We conclude, therefore, that for $R$ sufficiently small, the null space of $\mathcal{G}_w(w_*, \mathring{\rho}_*, \lambda_*)$ is trivial for any strong solution $(w_*, \mathring{\rho}_*, \lambda_*)$ satisfying \eqref{small amplitude assumption}.  
\end{proof}
\begin{remark}  While Turner uses a variational method, a good way to understand the above statement is through bifurcation theory.  If we imagine a curve of non-laminar solutions bifurcating from the trivial solution $(0, \lambda_{\textrm{crit}})$, then Theorem \ref{Turner main theorem} implies that this curve arcs in the direction $\lambda < \lambda_{\textrm{crit}}$.  Effectively, we are arguing that, in a sufficiently small neighborhood of the point of bifurcation, there is no secondary bifurcation, and hence the linearized operator has a trivial kernel.
\end{remark}

We are now prepared to prove a continuity result for the penalized problem.  

\begin{thm}[Continuity for penalized problem]\label{Calpha continuity theorem}  Let $\mathring{\rho}_* \in {C^{1,\alpha}([-1,0])}$ be a stable streamline density function with $\mathring{\rho}_*(0) = 1$, and let a non-laminar solution $(w_*, \mathring{\rho}_*, \lambda_*)$ of Problem \ref{ter-krikorov prob} satisfying  \eqref{assumptions on w_*}  be given.  There is a neighborhood $\mathring{\mathcal{V}} \times \Lambda$ of $(\mathring{\rho}_*, \lambda_*)$ in $X_2 \times \mathbb{R}$, and a  map $\mathcal{W} \in C^1(\mathring{\mathcal{V}} \times \Lambda; X_1)$ with  
$$ \mathcal{G}(\mathcal{W}(\mathring{\rho}, \lambda), \mathring{\rho}, \lambda) = 0, \qquad \textrm{for all $(\mathring{\rho}, \lambda) \in   \mathring{\mathcal{V}} \times \Lambda$}.$$
Moreover, the set $\{ ( \mathcal{W}(\mathring{\rho}, \lambda), \mathring{\rho}, \lambda) : (\mathring{\rho}, \lambda) \in \mathring{\mathcal{V}} \times \Lambda \}$ gives the complete zero-sets of $\mathcal{G}$ in a neighborhood of $(w_*, \mathring{\rho}_*, \lambda_*)$ in ${X} \times \mathbb{R}$.  
\end{thm}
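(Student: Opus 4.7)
The natural strategy is to apply the implicit function theorem to the map $\mathcal{G}: X_1 \times X_2 \times \mathbb{R} \to Y$ at the base point $(w_*, \mathring{\rho}_*, \lambda_*)$, treating the pair $(\mathring{\rho}, \lambda) \in X_2 \times \mathbb{R}$ as parameters. The equation $\mathcal{G}(w_*, \mathring{\rho}_*, \lambda_*) = 0$ holds by hypothesis: indeed $\|\nabla w_*\|_{L^\infty} < s$ means that $(\nabla a)(\nabla w_*) = (\nabla f)(\nabla w_*)$, so the penalized and physical equations coincide there. That $\mathcal{G}$ is $C^1$ on a neighborhood of this base point follows from the smoothness of the penalized nonlinearity $a$ (Lemma \ref{ellipticity lemma}), standard Nemytskii estimates using $\nabla w \in L^r$, and the fact that the dependence on $(\mathring{\rho}, \lambda)$ is affine after noting $\mathring{\rho}(\nabla a)(\nabla w) \in L^r$ for $\mathring{\rho} \in L^\infty$. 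Once $\mathcal{G}_w(w_*, \mathring{\rho}_*, \lambda_*): X_1 \to Y$ is shown to be a Banach space isomorphism, the IFT produces the neighborhood $\mathring{\mathcal{V}} \times \Lambda$, the $C^1$ map $\mathcal{W}$, and the local graph description of the zero set.

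The crux is therefore to verify that $\mathcal{G}_w(w_*, \mathring{\rho}_*, \lambda_*)$ is invertible. Injectivity is exactly Lemma \ref{null space lemma}, whose hypotheses \eqref{assumptions on w_*} are in force by assumption on $w_*$. For surjectivity, I plan to decompose
\[ \mathcal{G}_w(w_*, \mathring{\rho}_*, \lambda_*) = L + K, \]
where $L$ is the principal elliptic part augmented by a coercive shift,
\[ \langle L u, \Psi \rangle := -\int_{\mathcal{S}} \mathring{\rho}_* \nabla \Psi \cdot (D^2 f)(\nabla w_*) \nabla u \, d\xi\, d\zeta + \int_{\mathcal{S}} u\, \Psi \, d\xi\, d\zeta, \]
and $K$ collects the remaining zeroth- and first-order-in-$u$ contributions. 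Because $\mathring{\rho}_* \in C^{1,\alpha}([-1,0])$ and $w_* \in W^{2,r}(\mathcal{S}) \hookrightarrow C^{1,\alpha}(\overline{\mathcal{S}})$ by Morrey, the matrix $\mathring{\rho}_*(D^2 f)(\nabla w_*)$ is H\"older continuous on $\overline{\mathcal{S}}$, and by Lemma \ref{ellipticity lemma} it is uniformly positive definite. The standard $L^r$-theory for divergence-form elliptic operators with mixed Dirichlet/co-normal boundary data on the smooth periodic strip $\mathcal{S}$ then shows that $L: X_1 \to Y$ is an isomorphism. The operator $K$ is compact: each of its terms pairs a bounded function times $u$ or $\partial_\zeta u$ against $\Psi$ or $\partial_\zeta \Psi$, and all such terms factor through the compact embedding $I: X_1 \hookrightarrow Y$ of Lemma \ref{compact embedding lemma}. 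Consequently $\mathcal{G}_w(w_*, \mathring{\rho}_*, \lambda_*) = L(I + L^{-1}K)$ is a compact perturbation of an isomorphism and so is Fredholm of index zero; combined with the trivial kernel, it is an isomorphism.

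The main technical obstacle I anticipate is the $W^{1,r}$-surjectivity of the principal part $L$. The $H^1$-existence-and-uniqueness statement is immediate from Lax--Milgram applied to the coercive bilinear form, but the conclusion requires a solution in $W^{1,r}$ with $r > 2$ for every datum in $Y$. Since the coefficients of $L$ are H\"older continuous and the domain $\mathcal{S}$ is periodic in $\xi$ (so corner-free when viewed on the cylinder) with Dirichlet data on $\{\zeta = -1\}$ and a conormal condition on $\{\zeta = 0\}$, this upgrade follows from the classical Calder\'on--Zygmund $L^p$-estimates for divergence-form equations with mixed boundary data, or alternatively a Meyers-type reverse H\"older argument. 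With the isomorphism in hand, the implicit function theorem produces $\mathring{\mathcal{V}}, \Lambda$, and $\mathcal{W}$ with all the asserted properties, including the claim that the graph of $\mathcal{W}$ exhausts the zero set of $\mathcal{G}$ near $(w_*, \mathring{\rho}_*, \lambda_*)$.
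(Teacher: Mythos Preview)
Your approach is correct and follows the paper's overall strategy: establish that $\mathcal{G}_w(w_*,\mathring{\rho}_*,\lambda_*)$ is Fredholm of index zero, combine this with the trivial kernel from Lemma~\ref{null space lemma}, and apply the implicit function theorem. The difference lies only in how index zero is obtained. The paper keeps the entire operator $\mathcal{L}:=\mathcal{G}_w(w_*,\mathring{\rho}_*,\lambda_*)$ intact, shifts by $-\sigma$, and invokes the $L^r$ theory of Dong--Kim \cite{dong2010partialBMO} (valid for coefficients that are merely bounded measurable in $\zeta$ and H\"older in $\xi$) to conclude that $\mathcal{L}-\sigma$ is an isomorphism for $\sigma$ large; then $\mathcal{L}=(\mathcal{L}-\sigma)+\sigma I$ with $I$ the compact identification of Lemma~\ref{compact embedding lemma}, so $\mathcal{L}$ has index zero. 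You instead strip off the lower-order terms as $K$ and use classical $L^r$ theory with H\"older coefficients for the shifted principal part $L$. Both are valid here because $\mathring{\rho}_*\in C^{1,\alpha}$; the paper explicitly chooses its route ``to emphasize the generality'' --- its argument would go through verbatim for $\mathring{\rho}_*$ merely in $L^\infty$, whereas yours genuinely uses the differentiability of $\mathring{\rho}_*$.

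One step in your argument deserves more care. You assert that every term of $K$ ``factors through the compact embedding $I:X_1\hookrightarrow Y$.'' For the zeroth-order pieces (a bounded coefficient times $u$ paired against $\Psi$ or $\partial_\zeta\Psi$) this is immediate from $W^{1,r}_{\mathrm{per}}(\mathcal S)\subset\subset L^r_{\mathrm{per}}(\mathcal S)$. For the first-order piece $u\mapsto \lambda_*\mathring{\rho}_*\,\partial_\zeta u$ (read as the $\mathcal A$-component of an element of $Y$), it does \emph{not} literally factor through $I$: the map $u\mapsto\partial_\zeta u$ from $W^{1,r}$ to $L^r$ is bounded but not compact. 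What rescues you is precisely $\mathring{\rho}_*\in C^1$: integrating by parts gives $\int_{\mathcal S}\lambda_*\mathring{\rho}_*(\partial_\zeta u)\Psi = -\int_{\mathcal S}\lambda_*\mathring{\rho}_*' u\,\Psi - \int_{\mathcal S}\lambda_*\mathring{\rho}_* u\,\partial_\zeta\Psi + \int_{\{\zeta=0\}}\lambda_* u\,\Psi$, and each of these three contributions is compact from $X_1$ to $Y$ (the first two via $W^{1,r}\subset\subset L^r$, the boundary term via compactness of the trace $W^{1,r}(\mathcal S)\to L^r(\{\zeta=0\})$, after observing that such a trace functional does lie in $Y$). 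So $K$ is indeed compact, but the justification is a bit more involved than ``factors through $I$.'' Note that the paper sidesteps this altogether by leaving the first-order terms inside the isomorphism part and peeling off only the genuinely zeroth-order shift $\sigma I$.
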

\begin{proof}  We claim that for any $(w_*, \mathring{\rho}_*, \lambda_*)$ as above, $\mathcal{L} := \mathcal{G}_w(w_*, \mathring{\rho}_*, \lambda_*)$ is an isomorphism from $X_1$ to $Y$.  We show first that $\mathcal{L}$ is a Fredholm operator of index $0$.  Let $\sigma \geq 0$ and $u \in {X_1}$ be given, and suppose that 
$$ (\mathcal{L} - \sigma) u = \mathcal{A} + \partial_\xi \mathcal{B}_1 + \partial_\zeta \mathcal{B}_2 \in Y.$$
From \eqref{computation of DG}, we see that $u$ is weak solution of the divergence form elliptic problem 
$$ \nabla \cdot \left( \mathring{\rho}_* ( D^2 f)(\nabla w_*) \nabla u \right) - \lambda_* \partial_\zeta (\mathring{\rho}_* u) + \lambda_* \mathring{\rho}_* \partial_\zeta u - \sigma u = \mathcal{A} + \nabla \cdot \mathcal{B} \qquad \textrm{in } \mathcal{S},$$
with a co-normal boundary condition on the upper boundary 
$$ \mathring{\rho}_* (\nabla \partial_2 f)(\nabla w_*) \cdot \nabla u - \lambda_* \rho_* u = \mathcal{B}_2 \qquad \textrm{on } \{ \zeta = 0\},$$
and a homogeneous Dirichlet condition on the lower boundary $\{ \zeta = -1\}$.    Due to the smoothness of $w_*$, $D^2 f(\nabla w_*) \in C_{\textrm{per}}^{0,\alpha}(\overline{\mathcal{S}})$.  On the other hand, to emphasize the generality of this result, let us treat  $\mathring{\rho}_*$ simply as an element of $L^\infty$.  Then, the problem above represents a divergence form elliptic equation with coefficients that are bounded and measurable in the $\zeta$-direction, and H\"older continuous in the $\xi$-direction.   

There exists an elliptic regularity theory for such equations due to H. Dong and D. Kim \cite{dong2010partialBMO}.  In part, they prove that there exists a $\sigma_0 \geq 0$ such that, for any $\sigma \geq \sigma_0$, one has \emph{a priori} estimate 
\begin{align*} \sqrt{\sigma} \| \nabla u \|_{L_{\textrm{per}}^r(\mathcal{S})} + {\sigma} \| u \|_{L_{\textrm{per}}^r(\mathcal{S})} & \leq C \left(   \| \mathcal{A} \|_{L^r_{\textrm{per}}(\mathcal{S})}  +  \sqrt{\sigma} \| \mathcal{B}_1 \|_{L^r_{\textrm{per}}(\mathcal{S})} +  \sqrt{\sigma} \| \mathcal{B}_2 \|_{L^r_{\textrm{per}}(\mathcal{S})}  \right),  \end{align*}
where $C > 0$ is independent of $\sigma$ and $u$ (cf. \cite[Theorem 4 and Theorem 5]{dong2010partialBMO}).  From this it follows that 
$$ \| u \|_{X_1} = \| u \|_{W_{\textrm{per}}^{1,r}(\mathcal{S})} \leq \max\{ 1, \frac{1}{{\sigma}}\} C \| (\mathcal{L}-\sigma) u \|_{Y}.  $$
 Because $X_1$ is compactly embedded in $Y$ according to Lemma \ref{compact embedding lemma}, the inequality above implies that $\mathcal{L}-\sigma$ is semi-Fredholm for any $\sigma \geq 0$.    In fact, it is an isomorphism for $\sigma \geq \sigma_0$.  As the Fredholm index is continuous, we infer that $\mathcal{L}$ has Fredholm index $0$.
 
Now, by Lemma \ref{null space lemma}, it is already known that ${\mathcal{L}}$ is injective.  The argument above then shows that it must be an isomorphism.  The conclusion of the theorem follows from an application of the implicit function theorem.\end{proof}

Finally, we we observe that the solutions furnished by Theorem \ref{Calpha continuity theorem} inherit several key qualitative features of $w_*$.  Firstly, they are necessarily even about the $\zeta$-axis.  

\begin{lemma}[Symmetry] \label{symmetry lemma} Let $(\mathring{\rho}_*, w_*, \lambda_*)$ be given satisfying the hypotheses of Theorem \ref{Calpha continuity theorem}, and let $w = \mathcal{W}(\mathring{\rho}, \lambda)$, for some $(\mathring{\rho}, \lambda) \in \mathring{\mathcal{V}} \times \Lambda$.  Then $\xi \mapsto w(\xi, \cdot)$ is even.  
\end{lemma}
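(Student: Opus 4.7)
The plan is to exploit the reflection symmetry $\xi \mapsto -\xi$ of the ambient problem together with the uniqueness statement already baked into Theorem~\ref{Calpha continuity theorem}. Given $w = \mathcal{W}(\mathring{\rho},\lambda)$, define the reflected function
\[
\widetilde{w}(\xi,\zeta) := w(-\xi,\zeta).
\]
Since $w \in X_1$ and reflection preserves both $2L/d$-periodicity in $\xi$ and the Dirichlet condition on $\{\zeta = -1\}$, we have $\widetilde{w} \in X_1$, and clearly $\|\widetilde{w} - \widetilde{w}_*\|_{X_1} = \|w - w_*\|_{X_1}$.

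The first key step is to verify that $\mathcal{G}(\widetilde{w},\mathring{\rho},\lambda) = 0$. This is where the structure of the penalized nonlinearity matters. From \eqref{def f} and \eqref{def penalized a}, $a(p_1,p_2)$ depends on $p_1$ only through $p_1^2$, so $a$ is even in $p_1$; consequently $\partial_1 a$ is odd in $p_1$ while $\partial_2 a$ is even in $p_1$. Since $\mathring{\rho}$ is a function of $\zeta$ alone, if we write $\mathbf{G}(\nabla w) := \mathring{\rho}(\nabla a)(\nabla w)$, a direct chain-rule computation yields
\[
\nabla \cdot \mathbf{G}(\nabla \widetilde{w})(\xi,\zeta) = \bigl[\nabla \cdot \mathbf{G}(\nabla w)\bigr](-\xi,\zeta),
\]
and similarly the lower-order terms $-\lambda\partial_\zeta(\mathring{\rho} w) + \lambda\mathring{\rho}\,\partial_\zeta w$ transform by composition with $\xi \mapsto -\xi$. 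The conormal boundary expression on $\{\zeta = 0\}$, which involves only $\partial_2 a$ (even in $p_1$) and $w$ itself, is likewise invariant. Thus $\widetilde{w}$ is a weak solution of \eqref{Turner penalized problem} with the same $(\mathring{\rho},\lambda)$, i.e., $\mathcal{G}(\widetilde{w},\mathring{\rho},\lambda) = 0$.

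The second step is to invoke uniqueness. By Turner's Theorem~\ref{Turner main theorem}, $w_*$ is even in $\xi$, so $\widetilde{w}_* = w_*$. Shrinking the neighborhood $\mathring{\mathcal{V}}\times\Lambda$ if necessary so that the implicit function theorem gives a \emph{unique} zero of $\mathcal{G}(\cdot,\mathring{\rho},\lambda)$ in a ball around $w_*$ in $X_1$, we may assume $\widetilde{w}$ lies in that ball (since $\widetilde{w}$ has the same distance to $w_*$ as $w$ does). The uniqueness clause in Theorem~\ref{Calpha continuity theorem} then forces $\widetilde{w} = w$, which is exactly the evenness of $w$ in $\xi$.

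The only subtle point is making sure the reflection really does preserve membership in a neighborhood where uniqueness applies; this is automatic because reflection is an isometry of $X_1$ fixing $w_*$. No PDE-level obstacle arises here — the argument is essentially bookkeeping once one has observed the parity of $a$ in $p_1$ and the independence of $\mathring{\rho}$ from $\xi$.
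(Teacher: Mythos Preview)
Your proof is correct and follows essentially the same approach as the paper: define the reflection $Tw(\xi,\zeta) = w(-\xi,\zeta)$, observe that the penalized problem is invariant under $T$ so $Tw$ is also a zero of $\mathcal{G}(\cdot,\mathring{\rho},\lambda)$, and then invoke the local uniqueness from Theorem~\ref{Calpha continuity theorem} to conclude $Tw = w$. You have simply made explicit the details the paper leaves to the reader --- the parity of $a$ in $p_1$, the evenness of $w_*$ needed to place $\widetilde{w}$ in the uniqueness neighborhood, and the fact that reflection is an isometry on $X_1$.
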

\begin{proof}  Let $T : X_1 \to X_1$ be the transformation defined by 
$$ Tw(\xi, \zeta) := w(-\xi, \zeta).$$
It is easy to see that the system \eqref{Turner penalized problem} is invariant under $T$,  thus  $w$ is in the zero-set of $\mathcal{G}(\cdot, \mathring{\rho}, \lambda)$ if and only if $Tw$ is in the zero-set.  However, Theorem \ref{Calpha continuity theorem} ensures the local uniqueness of solutions.  We may conclude, therefore, that $Tw = w$.   
\end{proof}

Even more importantly, each of these solutions is a wave of elevation.  This will be critical to some of the arguments in the next subsection.  

\begin{lemma}[Wave of elevation] \label{wave of elevation lemma} Let $(w_*, \mathring{\rho}_*, \lambda_*)$ be given as in Theorem \ref{Calpha continuity theorem}, and suppose additionally that $w_*$ is a strict wave of elevation.  There exists a neighborhood $\mathring{\mathcal{Z}} \subset \mathring{\mathcal{V}}$ of $\mathring{\rho}_*$ in $L^\infty$ such that, for any $\mathring{\rho} \in \mathring{\mathcal{Z}}$ with $\mathring{\rho}(0) = 1$ and ${\|\mathring{\rho} - \mathring{\rho}_*\|_{L^\infty}}$ sufficiently small, the corresponding solution $w = \mathcal{W}(\mathring{\rho},\lambda_*) \in W_{\mathrm{per}}^{1,r}(\mathcal{S})$ of \eqref{Turner penalized problem} is a wave of elevation:
\[ w \geq 0 \qquad \textrm{in } \mathcal{S}.\]
 \end{lemma}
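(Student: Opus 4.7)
The plan is to deduce $w \geq 0$ directly by a weak maximum principle argument, testing the penalized problem \eqref{Turner penalized problem} against the truncation $w_- := \min(w, 0)$. First, take $\mathring{\mathcal{Z}}$ to be any open $L^\infty$-neighborhood of $\mathring{\rho}_*$ contained in $\mathring{\mathcal{V}}$ from Theorem~\ref{Calpha continuity theorem}; this guarantees existence of $w = \mathcal{W}(\mathring{\rho}, \lambda_*) \in X_1$. The Dirichlet condition $w = 0$ on $\{\zeta = -1\}$ gives $w_- \in X_1$, and since $r > 2$ on the bounded domain $\mathcal{S}$ we have $X_1 \hookrightarrow W^{1,r'}_{\mathrm{per}}(\mathcal{S})$, so $w_-$ is an admissible test function for $\mathcal{G}$.

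I would then substitute $\Psi = w_-$ into $\langle \mathcal{G}(w, \mathring{\rho}, \lambda_*), \Psi \rangle = 0$ and invoke the standard truncation identity $\nabla w_- = \chi_{\{w < 0\}} \nabla w$ a.e. The two lower-order $\lambda_*$ terms cancel identically, since on $\{w < 0\}$ one has $w = w_-$ and $\partial_\zeta w = \partial_\zeta w_-$:
\[
\lambda_* \int_{\mathcal{S}} \mathring{\rho} \, w \, \partial_\zeta w_- \, d\xi \, d\zeta - \lambda_* \int_{\mathcal{S}} \mathring{\rho} \, (\partial_\zeta w) \, w_- \, d\xi \, d\zeta = \lambda_* \int_{\{w<0\}} \mathring{\rho} \bigl( w \, \partial_\zeta w - w \, \partial_\zeta w \bigr) \, d\xi \, d\zeta = 0.
\]
Only the principal part remains, and using the ellipticity bound $(\nabla a)(p) \cdot p \geq \sigma_3 |p|^2$ from Lemma~\ref{ellipticity lemma}(ii) together with the stability bound $\mathring{\rho} \geq \mathring{\rho}(0) = 1$ on $[-1,0]$ (immediate from $\mathring{\rho}$ non-increasing), I deduce
\[
0 \;=\; \int_{\mathcal{S}} \mathring{\rho} \, \nabla w_- \cdot (\nabla a)(\nabla w) \, d\xi \, d\zeta \;\geq\; \sigma_3 \int_{\mathcal{S}} |\nabla w_-|^2 \, d\xi \, d\zeta.
\]
Hence $\nabla w_- \equiv 0$ a.e.; since $w_- = 0$ on the bed and $\mathcal{S}$ is connected, the Poincar\'e--Friedrichs inequality forces $w_- \equiv 0$, i.e., $w \geq 0$ throughout $\mathcal{S}$.

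The main conceptual step, and the only place really requiring care, is the exact cancellation of the two $\lambda_*$ terms on the negativity set $\{w < 0\}$; once that is recognized, the rest is a standard coercivity argument and does not require smallness of $\|\mathring{\rho} - \mathring{\rho}_*\|_{L^\infty}$ beyond membership in $\mathring{\mathcal{V}}$. Notably, the hypothesis that $w_*$ is a \emph{strict} wave of elevation plays no role in this particular step; it is inherited from the setting in which the continuity result is applied elsewhere in the paper.
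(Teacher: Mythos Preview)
Your cancellation of the two $\lambda_*$ terms is the crux of the argument, and unfortunately it rests on a sign typo in the paper's displayed formula for $\langle \mathcal{G}(w,\mathring{\rho},\lambda),\Psi\rangle$. Compare with the formula for $\mathcal{G}_w$ a few lines later, where both lower-order terms correctly carry a $+$ sign; one can also rederive the weak form directly from \eqref{Turner penalized problem} and the co-normal condition to see that the correct identity is
\[
0 \;=\; -\int_{\mathcal{S}} \mathring{\rho}\,\nabla\Psi\cdot(\nabla a)(\nabla w)\,d\xi\,d\zeta \;+\; \lambda_*\!\int_{\mathcal{S}} \mathring{\rho}\,w\,\partial_\zeta\Psi\,d\xi\,d\zeta \;+\; \lambda_*\!\int_{\mathcal{S}} \mathring{\rho}\,(\partial_\zeta w)\,\Psi\,d\xi\,d\zeta.
\]
With $\Psi=w_-$ the two $\lambda_*$ terms therefore \emph{add} rather than cancel, and after integrating by parts one finds
\[
\sigma_3\int_{\mathcal{S}}\mathring{\rho}\,|\nabla w_-|^2
\;\le\;
\lambda_*\Bigl[-\int_{\mathcal{S}}\mathring{\rho}'\,w_-^2 + \int_{\{\zeta=0\}}\mathring{\rho}\,w_-^2\Bigr],
\]
whose right-hand side is nonnegative and is exactly the denominator of the Rayleigh quotient \eqref{def lambda crit}. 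To conclude $w_-\equiv 0$ from this you would need $\lambda_*<\sigma_3\,\lambda_{\mathrm{crit}}(\mathring{\rho})$, i.e.\ supercriticality for the perturbed density together with $\sigma_3$ close to $1$; this does require smallness of $\|\mathring{\rho}-\mathring{\rho}_*\|_{L^\infty}$, contrary to your final remark. A quick sanity check shows the unconditional version cannot hold: if it did, every solution of \eqref{Turner penalized problem} for every $\lambda$ would be nonnegative, but for $\lambda$ equal to a higher eigenvalue of the linearized problem \eqref{null space: model problem} there are sign-changing solutions.

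The paper's proof is genuinely different and does use the strict-elevation hypothesis on $w_*$: it splits $\mathcal{S}$ into a bulk region $\mathcal{S}_+$ bounded away from the bed, where $w_*\ge m>0$ and hence $w>0$ by $C^{0,\alpha}$-continuity of $\mathcal{W}$, and a thin strip $\mathcal{S}_-$ near the bed, on which the bad-sign zero-order term is absorbed by a thin-domain maximum principle (Pucci--Serrin). Your variational approach can be repaired along the Rayleigh-quotient lines above, but the strict elevation of $w_*$ (or at least supercriticality transferred to $\mathring{\rho}$) is not disposable.
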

\begin{proof} We will argue using a maximum principle for quasilinear elliptic equations on thin sets.  Observe that $w$ solves 
\[ \nabla \cdot (\mathring{\rho} \nabla a)(\nabla w) - \lambda \mathring{\rho}^\prime w = 0,\]
if and only if $v := -w$ solves
\[ \nabla \cdot (\mathring{\rho} \nabla b)(\nabla v) - \lambda \mathring{\rho}^\prime v = 0,\]
where
\begin{align*} (\partial_1 b)(p_1, p_2) &:= p_1 + \frac{p_1 p_2}{1-p_2} \Phi_s + \frac{(p_1^2 + p_2^2) p_1 p_2}{1-p_2} \frac{\Phi_s^\prime}{s^2}\\
(\partial_2 b)(p_1, p_2) & := p_2 + \frac{p_2^2}{1-p_2} \Phi_s + \frac{1}{2} \frac{p_1^2 +p_2^2}{(1-p_2)^2} \Phi_s + \frac{(p_1^2+p_2^2) p_2^2}{1-p_2} \frac{\Phi_s^\prime}{s^2}. \end{align*}
It is easy to see, in light of Lemma \ref{ellipticity lemma}, that for $s$ sufficiently small, this defines an elliptic problem; denote its lower ellipticity coefficient $2\theta = 1+ \mathcal{O}(s)$. We can also write this in the form 
\[ \nabla \cdot [ \mathring{\rho} \mathbf{B}(v, \nabla v)] + \lambda \mathring{\rho} w_\zeta = 0,\]
with 
\[ \mathbf{B}(\xi, \zeta, z, p_1, p_2)  := (\nabla b)(p_1, p_2) -(0, \lambda \mathring{\rho}(\zeta) z).\]  
Notice that for all $(\xi,\zeta, z, p_1, p_2) \in \mathcal{S} \times \mathbb{R}_+ \times \mathbb{R}^2$, 
\begin{align} (p_1, p_2) \cdot \mathring{\rho}(\zeta) \mathbf{B}(x, \zeta, z, p_1, p_2) &= (p_1, p_2) \cdot [\mathring{\rho}(\zeta) (\nabla b)(p_1, p_2)] - \lambda \mathring{\rho}(\zeta) z p_2 \nonumber \\
& \geq 2\theta (p_1^2 +p_2^2) -\theta p_2^2 - \frac{1}{4\theta}  \lambda^2  \| \mathring{\rho} \|_{L^\infty}^2 z^2 \nonumber \\
& \geq \theta(p_1^2 + p_2^2) - \frac{1}{4\theta}\lambda^2  \| \mathring{\rho} \|_{L^\infty}^2 z^2. \label{structure 1} \end{align}
Moreover, 
\be \lambda \mathring{\rho} w_\zeta \leq \lambda  \| \mathring{\rho} \|_{L^\infty} |w_\zeta| \leq \lambda  \| \mathring{\rho} \|_{L^\infty} |\nabla w|.\label{structure 2} \ee

Now, choose $\zeta_0 \in (-1,0)$ so that 
\[ \zeta_0 + 1< \frac{1}{3} \pi \frac{\theta^2 d}{L \lambda^2 \| \mathring{\rho} \|_{L^\infty}^2},\]
and define 
\[ \mathcal{S}_+ := \{ (\xi,\zeta) \in \mathcal{S} : \zeta \in [\zeta_0,0] \}, \qquad \mathcal{S}_- := \{ (\xi, \zeta) \in \mathcal{S} : \zeta \in [-1,\zeta_0] \}.\]
Observe that this implies the measure of $\mathcal{S}_-$ satisfies the bound
\be \left| \mathcal{S}_- \right| = 2\frac{L}{d}(\zeta_0+1)< \frac{2}{3} \pi \frac{\theta^2}{\lambda^2 \| \mathring{\rho} \|_{L^\infty}^2}. \label{measure upper bound} \ee

Consider first the situation in $\mathcal{S}_+$.  Denote
\[ m := \inf_{\mathcal{S}_+} w_* > 0.\]
By our continuity result, we know that for $\| \mathring{\rho} - \mathring{\rho}_* \|_{L^\infty}$ sufficiently small, 
\[ \| w - w_*\|_{C^{0,\alpha}(\overline{\mathcal{S}})} < \frac{m}{2},\]
and hence 
\be w \geq \frac{m}{2} > 0, \qquad \textrm{in } \mathcal{S}_+.\label{w positive in Omega+} \ee

On the other hand, this means that $v = -w$ satisfies
\[ v \leq 0 \qquad \textrm{on } \partial \mathcal{S}_-.\]
Finally, we observe that \eqref{structure 1} and \eqref{structure 2} imply that $v$ solves a divergence form quasilinear elliptic problem satisfying the structural hypotheses of \cite[Theorem 3.3.1]{pucci2007book};  the choice of $\mathcal{S}_-$ made in \eqref{measure upper bound} ensures that the thinness hypothesis of that same theorem holds.  We may therefore conclude that  $v \leq 0$ in $\mathcal{S}_-$, or, equivalently, 
\[ w \geq 0 \qquad \textrm{in } \mathcal{S}_-.\]
Taken together with \eqref{w positive in Omega+}, this fact completes the proof.  \end{proof}
\begin{remark}  In the argument above, we have ignored the right and left boundaries of $\Omega_-$ when applying the maximum principle.  This can be justified in several ways.  First,  we may view the domain $\mathcal{S}$ as $\mathbb{T}_{L/d} \times [-1,0]$, where $\mathbb{T}_{L/d} = \mathbb{R} / (L/d)\mathbb{Z}$, so that there will indeed be no horizontal boundaries.  The proof of \cite[Theorem 3.3.1]{pucci2007book} relies only on the H\"older and Poincar\'e inequalities, both of which are valid on periodic domains.  Alternatively, we can periodically extend $w$ to $\mathcal{S}_{2L} := (-4L/d, 4L/d) \times (-1,0)$, and then reconsider the problem using $u := \Psi w$ in place of $w$, where $\Psi = \Psi(\xi)$ is a cutoff function with 
\[ \Psi = 1 \textrm{ on } [-L/d, L/d], \qquad 0 \leq \Psi \leq 1, \qquad \supp{\Psi} \subset [-3L/(2d), 3L/(2d)].\]
It is easy to then see that $u$ will solve a quasilinear elliptic problem with the same structure, but will vanish on the horizontal boundaries of $\mathcal{S}_{2L-}$.
\end{remark}

\subsection{A priori estimates for the penalized problem} \label{a priori section}

In this section, we lay the groundwork needed to show that the solutions constructed in Theorem \ref{Calpha continuity theorem} are physical solutions, provided that $\mathring{\rho}$ is layer-wise smooth and sufficiently close to $\mathring{\rho}_*$ in $L^\infty$.  

The next several lemmas seek to control $w$ ands its derivatives in various norms via the energy $\| \nabla w \|_{L^2}$.   We will do this for smooth $(w, \mathring{\rho})$, but obtain bounds that are independent of $\mathring{\rho}^\prime$; a limiting argument will then allow us to conclude that the same estimates hold for the densities of interest.  The arguments we employ are all quite similar to those in Turner \cite{turner1981internal,turner1984variational}, but we have simplified them in certain places, and, crucially, we have shown that they are uniform in the number of layers.  

\begin{lemma}[$L^2$ control of $\nabla \partial_\xi w$] \label{control of nabla w_x lemma} There exists $s_1 > 0$ such that, if 
$$(w, \mathring{\rho}, \lambda) \in H^1(\mathcal{S}) \cap C^2(\mathcal{S}) \times C^1([-1,0]) \times \mathbb{R}$$
is a  solution of the penalized problem \eqref{Turner penalized problem} for $s \in (0,s_1)$, then 
\be \int_{\mathcal{S}}  | \nabla \partial_\xi w |^2 \, d\xi \, d\zeta \leq C_1 \int_{\mathcal{S}}  |\partial_\xi w|^2 \, d\xi \, d\zeta,\label{nabla w_x control} \ee
for some constant $C_1 = C_1(\lambda, {\mathring{\rho}(0)}, {\mathring{\rho}(-1)}) > 0$.
\end{lemma}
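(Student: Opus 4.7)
The strategy is to differentiate the interior equation in $\xi$, use $u := \partial_\xi w$ as a test function against the resulting linear equation, and integrate by parts. The crux will be a cancellation that makes the estimate depend only on $\mathring{\rho}(0)$ and $\mathring{\rho}(-1)$ rather than on $\|\mathring{\rho}'\|_{L^\infty}$.

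First I would observe that the combination $-\lambda\partial_\zeta(\mathring{\rho}w)+\lambda\mathring{\rho}\partial_\zeta w$ simplifies to $-\lambda\mathring{\rho}'w$, so the interior equation reads
\[
\nabla\cdot\bigl(\mathring{\rho}(\nabla a)(\nabla w)\bigr)=\lambda\mathring{\rho}' w\quad\text{in }\mathcal{S}.
\]
Since $w\in C^2$ and $\mathring{\rho}\in C^1$, I can differentiate both the interior equation and the boundary conditions in $\xi$. Writing $u=\partial_\xi w$, the linearized system for $u$ is
\[
\nabla\cdot\bigl(\mathring{\rho}(D^2a)(\nabla w)\nabla u\bigr)=\lambda\mathring{\rho}'u\ \text{in }\mathcal{S},\quad u=0\ \text{on }\{\zeta=-1\},
\]
while differentiating the conormal condition $\mathring{\rho}(\partial_2 a)(\nabla w)=\lambda\mathring{\rho}w$ in $\xi$ gives precisely $\mathring{\rho}(D^2a)(\nabla w)\nabla u\cdot(0,1)=\lambda\mathring{\rho}u$ on $\{\zeta=0\}$.

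Next I would multiply the interior equation by $u$ and integrate over $\mathcal{S}$. Periodicity in $\xi$ kills the lateral boundary contributions; the bottom vanishes because $u=0$ there; and the top boundary integral, rewritten via the conormal identity, yields $\lambda\int_{\{\zeta=0\}}\mathring{\rho}u^2\,d\xi$. This gives
\[
\int_{\mathcal{S}}\mathring{\rho}\,\nabla u\cdot(D^2a)(\nabla w)\nabla u\,d\xi d\zeta
=\lambda\int_{\{\zeta=0\}}\mathring{\rho}u^2\,d\xi-\lambda\int_{\mathcal{S}}\mathring{\rho}'u^2\,d\xi d\zeta.
\]
The clever step is to integrate by parts in $\zeta$ in the last term, again using $u|_{\zeta=-1}=0$:
\[
-\int_{\mathcal{S}}\mathring{\rho}'u^2\,d\xi d\zeta=-\int_{\{\zeta=0\}}\mathring{\rho}u^2\,d\xi+2\int_{\mathcal{S}}\mathring{\rho}\,u u_\zeta\,d\xi d\zeta.
\]
The two top-boundary terms cancel and we are left with
\[
\int_{\mathcal{S}}\mathring{\rho}\,\nabla u\cdot(D^2a)(\nabla w)\nabla u\,d\xi d\zeta=2\lambda\int_{\mathcal{S}}\mathring{\rho}\,u u_\zeta\,d\xi d\zeta.
\]

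Finally I would close the estimate using the ellipticity of Lemma~\ref{ellipticity lemma}(iv), which for $s<s_1$ small gives $\nu>0$, together with monotonicity $\mathring{\rho}(0)\leq\mathring{\rho}\leq\mathring{\rho}(-1)$: the left-hand side is bounded below by $\nu\mathring{\rho}(0)\|\nabla u\|_{L^2}^2$, while Cauchy--Schwarz bounds the right-hand side above by $2\lambda\mathring{\rho}(-1)\|u\|_{L^2}\|\nabla u\|_{L^2}$. Dividing through by $\|\nabla u\|_{L^2}$ and squaring yields \eqref{nabla w_x control} with $C_1=\bigl(2\lambda\mathring{\rho}(-1)/(\nu\mathring{\rho}(0))\bigr)^2$. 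There is no real obstacle here: the only mild subtlety is the boundary cancellation identified above, which is what allows the constant to be independent of $\mathring{\rho}'$ and ultimately lets the estimate survive the limiting procedure used to treat layer-wise smooth densities.
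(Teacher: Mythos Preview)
Your proof is correct and follows essentially the same approach as the paper: differentiate in $\xi$, test against $v=\partial_\xi w$, and use the integration-by-parts identity $-\lambda\int_{\mathcal{S}}\mathring{\rho}'v^2+\lambda\int_{\{\zeta=0\}}\mathring{\rho}v^2=2\lambda\int_{\mathcal{S}}\mathring{\rho}\,v\,v_\zeta$ to eliminate the dependence on $\mathring{\rho}'$. The only cosmetic difference is in how the final inequality is closed (the paper applies Young's inequality with the weight $\mathring{\rho}$ retained, yielding $C_1=4\lambda^2\mathring{\rho}(-1)/(\nu^2\mathring{\rho}(0))$, whereas you pull $\mathring{\rho}$ out first via Cauchy--Schwarz), leading to a harmlessly different constant.
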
 
\begin{proof}  Taking $\partial_\xi$ of \eqref{Turner penalized problem}, we see that $v := w_\xi$ satisfies 
\be 
\left\{ \begin{array}{ll} 
\nabla \cdot (\mathring{\rho} A(\nabla w) \nabla v) - \lambda \partial_\zeta ( \mathring{\rho} v) + \lambda \mathring{\rho} \partial_\zeta v = 0 & \textrm{in } \mathcal{S} \\
v = 0 & \textrm{on } \{ \zeta = -1\} \\
\mathring{\rho} A_2(\nabla w) \cdot \nabla v - \lambda v = 0 & \textrm{on } \{ \zeta = 0 \},  \end{array} \right.\label{v problem} \ee
where we are denoting $A(\nabla w) := (D^2 a)(\nabla w)$.  Taking the inner product with $v$ and integrating by parts further reveals that
\[ \int_{\mathcal{S}} \mathring{\rho} \nabla v \cdot [A(\nabla w) \nabla v] \, d\xi \, d\zeta = - \lambda \int_{\mathcal{S}} \mathring{\rho}^\prime v^2 \, d\xi \, d\zeta + \lambda \int_{\{ \zeta = 0\}} \mathring{\rho} v^2 \, d\xi.\]
From Lemma \ref{ellipticity lemma}, we know that there exists $\nu = \nu(s) > 0$ such that 
\[ \nabla v \cdot [A(\nabla w) \nabla v] \geq \nu |\nabla v|^2 \qquad \textrm{in } \mathcal{S}.  \]
We note that in fact, $\nu = \nu(s) = 1 + \mathcal{O}(s)$, and so for $s$ sufficiently small,  $\nu \in (1/2,3/2)$.
 
Continuing from above, we see that 
\begin{align*} \nu \int_{\mathcal{S}} \mathring{\rho} |\nabla v|^2 \, d\xi \, d\zeta & \leq - \lambda \int_{\mathcal{S}} \mathring{\rho}^\prime v^2 \, d\xi \, d\zeta + \lambda \int_{\{ \zeta = 0\}} \mathring{\rho} v^2 \, d\xi \\
& = 2\lambda \int_{\mathcal{S}} \mathring{\rho} v \partial_\zeta v \, d\xi \, d\zeta \\
& \leq \frac{\nu}{2} \int_{\mathcal{S}} \mathring{\rho} |\partial_\zeta v|^2 \, d\xi \, d\zeta + \frac{2\lambda^2}{\nu} \int_{\mathcal{S}} \mathring{\rho} v^2\, d\xi \, d\zeta.  \end{align*}
Recalling that $v = \partial_\xi w$, and that $\mathring{\rho}$ is nondecreasing, this immediately gives the estimate in \eqref{nabla w_x control} with
\be C_1 := \frac{4\lambda^2 \mathring{\rho}(-1)}{ \nu^2 \mathring{\rho}(0)} \leq \frac{16 \lambda^2 \mathring{\rho}(-1)}{\mathring{\rho}(0)}, \qquad \textrm{for all $s$ sufficiently small.} \label{def C1}  \ee
This completes the proof. 
\end{proof}

\begin{lemma}[H\"older continuity of $\partial_\xi w$] \label{holder control w_x lemma}   Let $(w, \mathring{\rho}, \lambda)$ be given as in the previous lemma.  Then there exists $\beta \in (0,1)$ such that  
\be \| \partial_\xi w \|_{C^{0, \beta}(\mathcal{S})} \leq C_2 \max\{ \| \nabla w \|_{L^2(\mathcal{S})}^2, \, \| \nabla w \|_{L^2(\mathcal{S})}\},\label{Holder control w_x} \ee
for a constant $C_2 = C_2(\lambda, \mathring{\rho}(-1)) > 0$. 
\end{lemma}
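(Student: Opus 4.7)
My strategy is to treat $v:=\partial_\xi w$ as a weak solution of the linear divergence-form elliptic problem \eqref{v problem}, exploiting the penalization in Lemma~\ref{ellipticity lemma} to obtain uniformly bounded, uniformly elliptic coefficients, and then to apply the De~Giorgi--Nash--Moser machinery. The first task is to rewrite \eqref{v problem} in a weak formulation that does \emph{not} see $\mathring{\rho}'$ (since ultimately we want the constant $C_2$ to be independent of any smoothness of $\mathring{\rho}$). Integrating by parts in the two lower-order $\zeta$-derivative terms of \eqref{v problem} against a test function $\varphi\in H^1_{\mathrm{per}}(\mathcal{S})$ vanishing on $\{\zeta=-1\}$, the boundary contribution on $\{\zeta=0\}$ cancels exactly with the co-normal condition, and one obtains
\[
\int_{\mathcal{S}} \mathring{\rho}\, A(\nabla w)\nabla v\cdot\nabla\varphi\,d\xi\,d\zeta
\;=\;\lambda\int_{\mathcal{S}} \mathring{\rho}\bigl(v\,\partial_\zeta\varphi+\varphi\,\partial_\zeta v\bigr)\,d\xi\,d\zeta,
\]
in which $\mathring{\rho}$ appears only as an $L^\infty$ weight. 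By Lemma~\ref{ellipticity lemma}(i)--(iv), the symmetric matrix $\mathring{\rho}\,A(\nabla w)$ is uniformly elliptic with ellipticity and upper bounds depending only on $\mathring{\rho}(0)$ and $\mathring{\rho}(-1)$ once $s\in(0,s_0)$ is fixed.

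Next, I would invoke the standard interior De~Giorgi--Nash--Moser estimate (e.g.\ Gilbarg--Trudinger, Theorems~8.22 and~8.24) to produce an exponent $\beta\in(0,1)$ depending only on the ellipticity constants, together with an interior Hölder bound $\|v\|_{C^{0,\beta}(K)}\leq C(K)\|v\|_{L^2(\mathcal{S})}$ on each compact $K\Subset\mathcal{S}$. Boundary regularity near $\{\zeta=-1\}$ follows from the boundary version of that theory applied to the homogeneous Dirichlet datum. The potentially delicate piece is the oblique/co-normal boundary $\{\zeta=0\}$; I would handle it by even reflection across that boundary, extending $v$ to $(-L/d,L/d)\times(-1,1)$ and observing that the reflected coefficients remain uniformly elliptic and bounded, so the interior Hölder estimate applies at points of $\{\zeta=0\}$ as well. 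Combining with Moser iteration on the same weak formulation yields an $L^\infty$ bound $\|v\|_{L^\infty(\mathcal{S})}\leq C\|v\|_{L^2(\mathcal{S})}$ with constant controlled by $\lambda$ and $\mathring{\rho}(-1)$.

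Chaining the two estimates gives $\|v\|_{C^{0,\beta}(\overline{\mathcal{S}})}\leq C\|v\|_{L^2(\mathcal{S})}\leq C\|\nabla w\|_{L^2(\mathcal{S})}$. The $\max$ in \eqref{Holder control w_x} then arises because the De~Giorgi--Nash--Moser constants are multiplicative in the coefficient bounds: the coefficient matrix $A(\nabla w)$ and the right-hand side of the Moser iteration contribute factors that, once tracked carefully in terms of $\|\nabla w\|_{L^2}$, produce a term linear in $\|\nabla w\|_{L^2}$ (dominant for small energies) and a term quadratic in $\|\nabla w\|_{L^2}$ (dominant for larger energies); writing the bound as the maximum covers both regimes uniformly.

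The main obstacle is ensuring that $C_2$ depends only on $\lambda$ and $\mathring{\rho}(-1)$, never on $\mathring{\rho}'$ or on the number of layers, since the whole point of the paper is to apply these estimates along a sequence of smooth $\mathring{\rho}_n\to\mathring{\rho}$ whose derivatives blow up. This forces me to work exclusively with the divergence-form weak formulation above (so $\mathring{\rho}$ enters only as an $L^\infty$ weight), to set up Moser iteration with cut-off test functions of the form $\eta^2 v|v|^{p-2}$ so that $\mathring{\rho}'$ never appears after integration by parts, and to verify that the reflection argument at $\{\zeta=0\}$ does not introduce derivatives of $\mathring{\rho}$ either. Once these points are secured, the estimate \eqref{Holder control w_x} follows from the standard theory with a constant of the desired form.
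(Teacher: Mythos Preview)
Your overall strategy coincides with the paper's: recognize that $v=\partial_\xi w$ solves a linear divergence-form elliptic problem with $L^\infty$ coefficients (uniform ellipticity coming from the penalization, Lemma~\ref{ellipticity lemma}) and apply elliptic regularity, making sure only $\|\mathring{\rho}\|_{L^\infty}$ enters the constants. The paper simply cites \cite[Theorem~8.29]{gilbarg2001elliptic}, which already covers the mixed Dirichlet/co-normal boundary problem in one stroke, together with Poincar\'e and Lemma~\ref{control of nabla w_x lemma}.

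Two points in your write-up need tightening. First, the even-reflection step at $\{\zeta=0\}$ is not quite clean: the condition there is Robin-type, $\mathring{\rho}\,A_2(\nabla w)\cdot\nabla v=\lambda v$, not pure Neumann. The even extension of $v$ therefore solves the interior equation only up to a singular source $2\lambda\mathring{\rho}\,v\,\delta_{\{\zeta=0\}}$, and you would have to check that the De~Giorgi--Nash--Moser iteration absorbs this codimension-one term. It can be done, but it is extra work; the paper avoids the issue by invoking a theorem that treats the Robin boundary directly.

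Second, your explanation for the $\max$ is off. Because of the penalization, the matrix $A(\nabla w)$ has ellipticity and upper bounds equal to $1+\mathcal{O}(s)$ \emph{independently} of $w$, so the De~Giorgi--Nash--Moser constants do not carry factors of $\|\nabla w\|_{L^2}$ as you suggest. In the paper the quadratic branch arises from the lower-order source term in the Theorem~8.29 estimate: one needs $\|\lambda\mathring{\rho} v\|_{L^q}$ for some $q>2$, and the paper (crudely) bounds $\|v\|_{L^4}$ by $\|v\|_{L^2}^2$, which yields the $\|\nabla w\|_{L^2}^2$ term. Your argument in fact gives the linear bound $\|v\|_{C^{0,\beta}}\le C\|\nabla w\|_{L^2}$, which is stronger than \eqref{Holder control w_x}; the $\max$ then holds trivially.
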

\begin{proof}  Denote $v := \partial_\xi w$. By the argument of the previous lemma, treating $w$ as known, we see that $v$ solves a divergence form linear elliptic problem in $\mathcal{S}$, with a co-normal boundary condition on the upper boundary and a homogeneous Dirichlet condition on the lower boundary.   We may therefore appeal to elliptic regularity theory (e.g., \cite[Theorem 8.29]{gilbarg2001elliptic}) to conclude that for some $\beta \in (0,1)$, 
\[ \| v \|_{C^{0,\beta}(\overline{\mathcal{S}})} \leq C\left( \| v \|_{L^2(\mathcal{S})} + \| \lambda \mathring{\rho} v \|_{L^4(\mathcal{S})} + \| \lambda \mathring{\rho} \partial_\zeta v \|_{L^2(\mathcal{S})}  \right),  \]
where $C = C(s, \| \mathring{\rho} \|_{L^2})$, but is independent of $s$ for $s < s_1$.  Now clearly, 
\[ \| \lambda \mathring{\rho} v\|_{L^4(\mathcal{S})} \leq \lambda \| \mathring{\rho} \|_{L^\infty(\mathcal{S})} \| v \|_{L^4(\mathcal{S})} \leq \lambda \| \mathring{\rho} \|_{L^\infty} \| v \|_{L^2(\mathcal{S})}^2. \]
Since $v = 0$ on the bottom boundary, we may apply Poincar\'e's inequality in the form 
\[ \| v \|_{L^2} \leq \frac{1}{\pi} \| \nabla v \|_{L^2},\]
and \eqref{nabla w_x control} to obtain
\begin{align*}  \| v \|_{C^{0,\beta}(\overline{\mathcal{S}})} &\leq C\left(   \| \nabla v \|_{L^2(\mathcal{S})} + \lambda \| \mathring{\rho} \|_{L^\infty}  \| \nabla v \|_{L^2(\mathcal{S})}^2 + \lambda \| \mathring{\rho} \|_{L^\infty} \| \nabla v \|_{L^2(\mathcal{S})} \right) \\
& \leq C_2 \max\left\{ \int_{\mathcal{S}} | \partial_\xi w|^2 \, d\xi \, d\zeta, \, \left[\int_{\mathcal{S}} | \partial_\xi w|^2 \, d\xi \, d\zeta\right]^{1/2}\right\}.  \end{align*}
Here $C_2 = C_2(\lambda, \| \mathring{\rho} \|_{L^\infty}).$   This implies inequality \eqref{Holder control w_x}, and hence the lemma is complete.  
\end{proof}

\begin{lemma}[$L^\infty$ control of $\partial_\zeta w$] \label{preliminary control of w_eta lemma} Let $(w, \mathring{\rho}, \lambda)$ be given as in the previous lemma.  Then 
\be \| w_\zeta \|_{L^\infty(\mathcal{S})} \leq \sqrt{2} s + C_3 \max\{  \| \nabla w \|_{L^2(\mathcal{S})},  \| \nabla w \|_{L^2(\mathcal{S})}^2 \} \label{w_eta prelim estimate} \ee
where $C_3 = C_3(\lambda, \mathring{\rho}(0), \mathring{\rho}(-1)) > 0.$ 
\end{lemma}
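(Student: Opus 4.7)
The strategy is to derive a first-order linear ODE in $\zeta$ (with $\xi$ as parameter) for the auxiliary quantity
$$\phi(\xi,\zeta) := a_2(\nabla w(\xi,\zeta)) - \lambda w(\xi,\zeta),$$
which vanishes on $\{\zeta=0\}$ by the co-normal boundary condition of \eqref{Turner penalized problem}. Expanding the divergence in the interior equation and using that $\mathring{\rho}$ depends only on $\zeta$ gives the non-divergence form
$$\mathring{\rho}\bigl[a_{11} w_{\xi\xi} + 2a_{12} w_{\xi\zeta} + a_{22} w_{\zeta\zeta}\bigr] + \mathring{\rho}'\phi = 0 \qquad \textrm{in } \mathcal{S},$$
where $a_{ij}$ denotes $(\partial_i\partial_j a)(\nabla w)$. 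A direct computation of $\partial_\zeta\phi = a_{12}w_{\xi\zeta} + a_{22} w_{\zeta\zeta} - \lambda w_\zeta$ combined with the above yields
$$\partial_\zeta(\mathring{\rho}\phi) = -\mathring{\rho}\bigl[\partial_\xi(a_1(\nabla w)) + \lambda w_\zeta\bigr].$$

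Integrating this ODE from $\zeta$ up to $0$ with $\phi(\xi,0)=0$, interchanging $\partial_\xi$ with $\int_\zeta^0$ (permissible since $w\in C^2$), and integrating by parts in the $\lambda w_\zeta$ contribution, one represents $\mathring{\rho}(\zeta)a_2(\nabla w)(\xi,\zeta)$ as a sum of: boundary values of $w$; an integral of $\mathring{\rho}' w$ (dominated by $\mathring{\rho}(-1)\|w\|_{L^\infty}$ since $\mathring{\rho}'\le 0$); and $\partial_\xi\int_\zeta^0 \mathring{\rho}(\zeta')a_1(\nabla w)(\xi,\zeta')\,d\zeta'$. Invoking the near-identity structure $a_j(p)=p_j + O(s|p|)$ from Lemma~\ref{ellipticity lemma}(v), this representation produces
$$w_\zeta(\xi,\zeta) = \frac{1}{\mathring{\rho}(\zeta)}\Bigl[\lambda\mathring{\rho}(0) w(\xi,0) + \partial_\xi \int_\zeta^0 \mathring{\rho} w_\xi\, d\zeta' - \lambda \int_\zeta^0 \mathring{\rho}' w\, d\zeta'\Bigr] + \mathcal{E},$$
where $\mathcal{E} = O(s)\|\nabla w\|_{L^\infty(\mathcal{S})}$ gathers the cutoff error and includes an $O(s)\|w_\zeta\|_{L^\infty}$ contribution that, for $s<s_1$ sufficiently small, can be absorbed into the left-hand side, leaving the $\sqrt{2}\,s$ residue appearing in the statement. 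The factor $\|w\|_{L^\infty}$ is itself controlled by combining the H\"older bound on $w_\xi$ from Lemma~\ref{holder control w_x lemma}, the homogeneous Dirichlet condition $w|_{\zeta=-1}=0$, and the fundamental theorem of calculus along both coordinate directions.

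\textbf{Main obstacle.} The delicate step is the pointwise-in-$\xi$ control of
$$\partial_\xi\int_\zeta^0\mathring{\rho}(\zeta')w_\xi(\xi,\zeta')\,d\zeta' = \int_\zeta^0 \mathring{\rho}(\zeta') w_{\xi\xi}(\xi,\zeta')\,d\zeta'.$$
Lemma~\ref{control of nabla w_x lemma} only furnishes an $L^2(\mathcal{S})$-estimate on $w_{\xi\xi}$, and a naive Cauchy--Schwarz on the slice yields only an $L^2_\xi$ (not $L^\infty_\xi$) bound on this integral. Closing the estimate therefore requires an elliptic regularity step applied to the linear problem \eqref{v problem} solved by $v=w_\xi$, exploiting its uniform ellipticity from Lemma~\ref{ellipticity lemma} and the pointwise H\"older control of $v$ from Lemma~\ref{holder control w_x lemma} to upgrade the $L^2$-bound on $\nabla v$ into a pointwise-in-$\xi$ bound of order $\max\{\|\nabla w\|_{L^2},\|\nabla w\|_{L^2}^2\}$, matching the quadratic dependence required by the target estimate.
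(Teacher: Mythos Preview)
Your approach is genuinely different from the paper's.  The paper proves this lemma by a \emph{comparison principle}: it fixes a point $(\xi_0,\zeta_0)$, constructs an explicit barrier of the form
\[
u(\hat\xi,\hat\zeta)=\hat w(0,0)+U_1\hat\xi+U_2\hat\zeta+\tfrac12 U_3\hat\zeta^2/|\zeta_0|+U_\beta\,\Re(\hat z^{1+\beta}),
\]
and chooses $U_2,U_3,U_\beta$ so that (a) $|\nabla u|>\sqrt{2}\,s$ everywhere, forcing $u$ into the fully penalized regime where $\mathcal{Q}(\nabla u)=\nabla\cdot(\hat\rho\nabla u)$; (b) $\mathcal{Q}(\nabla u)\geq 0\geq \mathcal{Q}(\nabla\hat w)$ (the latter inequality uses the wave-of-elevation property $\hat\rho'\hat w\leq 0$, which your approach does not need); and (c) $u\leq\hat w$ on the boundary of a suitable box, exploiting the H\"older control of $w_\xi$ from Lemma~\ref{holder control w_x lemma}.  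The $\sqrt{2}\,s$ in the final estimate is exactly the threshold in requirement (a): $|U_2|$ must exceed $\sqrt{2}\,s$ plus lower-order terms, and $|w_\zeta(\xi_0,\zeta_0)|\leq |U_2|$.

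Your ODE/integral-representation route is in fact the method the paper uses for the \emph{next} lemma, Lemma~\ref{control of w_eta in terms of R lemma}, which removes the $\sqrt{2}\,s$ dependence.  This explains why your account of the $\sqrt{2}\,s$ residue is off: if the absorption of $O(s)\|w_\zeta\|_{L^\infty}$ works, no $s$-term remains --- your scheme, done correctly, would bypass Lemma~\ref{preliminary control of w_eta lemma} entirely and land directly on the improved bound.

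There is, however, a real gap at your ``main obstacle.''  You need $\sup_\xi\bigl|\int_\zeta^0\mathring\rho\,\partial_\xi a_1(\nabla w)\,d\zeta'\bigr|$, and invoking ``an elliptic regularity step'' on the $v=w_\xi$ problem is not enough: the coefficients $A(\nabla w)$ are merely continuous (you have no control yet on $\|\nabla w\|_{C^{0,\alpha}}$), so Schauder is unavailable, and De Giorgi--Nash--Moser gives only the $C^{0,\beta}$ bound on $v$ already recorded in Lemma~\ref{holder control w_x lemma}, not gradient bounds.  The paper's resolution (in Lemma~\ref{control of w_eta in terms of R lemma}) is instead to use periodicity to find, for each $\zeta$, a point $\xi_0(\zeta)$ where $\partial_\xi a_1(\nabla w)=0$, write $\partial_\xi a_1(t,\zeta)=\int_{\xi_0}^t\partial_\xi^2 a_1\,d\xi$, and then control the resulting double integral via $\|\nabla w_{\xi\xi}\|_{L^2}$ (Lemma~\ref{control of nabla w_xx lemma}) and $\|\nabla w_\xi\|_{L^2}^2$.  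Your sketch of $\|w\|_{L^\infty}$ via the fundamental theorem of calculus has the same defect: integrating $w_\zeta$ in $\zeta$ gives only $\sup_\xi\|w_\zeta(\xi,\cdot)\|_{L^1_\zeta}$, which you do not control pointwise in $\xi$; the paper handles this with an anisotropic Sobolev inequality.
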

\begin{proof}  Let $\sigma \in (0,1)$ be given, and consider $(x_0, \zeta_0) \in \mathcal{S}$ with $\zeta_0 < -\sigma$.  Our argument will closely follow that given in \cite[Lemma 3.3]{turner1981internal} and \cite[Lemma 3.4]{turner1984variational}.   We work in the periodic strip 
\[ \mathcal{S}_0 := \{ (\xi,\zeta) \in \mathcal{S} : \zeta \in [\zeta_0, 0] \}. \]
For notational simplicity, let us change coordinates to 
\[ (\xi, \zeta) \mapsto ( \hat \xi, \hat \zeta ) := (\xi - \xi_0, \zeta - \zeta_0), \]
which has the effect of translating $(\xi_0, \zeta_0)$ to the origin in the $(\hat \xi, \hat \zeta)$-variables.  Likewise $\mathcal{S}_0$ becomes 
\[ \widehat{\mathcal{S}}_0 := \{ (\hat \xi, \hat \zeta) \in \mathbb{T}_{L/d} \times [0, |\zeta_0|] \} \supset \mathbb{T}_k \times \mathbb{T}_{L/d} \times [0, \sigma].\]
Putting
\[ \hat {\rho}(\hat \zeta) = \mathring{\rho}(\zeta), \qquad \hat w(\hat \xi, \hat \zeta) := w(\xi, \zeta),\]
we find that $\hat w$ solves the quasilinear equation 
\[ \mathcal{Q}(\nabla \hat w) := \nabla \cdot [ \hat{{\rho}} (\nabla a)(\nabla \hat{w}) ] = \lambda \hat{{\rho}}^\prime \hat{w}, \qquad \textrm{in } \widehat{\mathcal{S}}_0.\]
In light of Lemma \ref{wave of elevation lemma}, moreover, we see that
\[ \mathcal{Q}(\nabla \hat w) \leq 0, \qquad \textrm{in } \widehat{\mathcal{S}}_0.\]
This is the most critical usage of the fact that $w$ is a wave of elevation.  The strategy is now to use a comparison argument:  we will construct a function $u$ with $\mathcal{Q}(u) \geq \mathcal{Q}(w)$ in $\widehat{\mathcal{S}}_0$, and $u \leq w$ on $\partial \widehat{\mathcal{S}}_0$.  With that in mind, consider a function $u = u(\hat \xi, \hat \zeta)$ of the form 
\[ u(\hat \xi, \hat \zeta) = \hat w(0,0) + U_1 \hat \xi + U_2 \hat \zeta + \frac{1}{2} U_3 \frac{\hat \zeta^2}{|\zeta_0|} +  U_\beta H(\hat \xi, \hat \zeta),\]
where $U_1 := (\partial_\xi w)(\xi_0, \zeta_0)$,  $\beta$ is given as in the previous lemma, the constants $U_2, U_3,  U_\beta \in \mathbb{R}$ are to be determined, and $H$ is a harmonic function of the form
\[ H(\hat \xi, \hat \zeta) :=  \realpart{\hat{z}^{1+\beta}}, \qquad \hat z := \hat \zeta + i\hat \xi. \]  

We compute that 
\begin{align*} \partial_{\hat \xi} H &= (1+\beta) |z|^{\beta-1}\left[ \hat \xi \cos{((1+\beta)\theta)} - \hat \zeta \sin{((1+\beta)\theta)} \right] \\
\partial_{\hat \zeta} H &= (1+\beta)|z|^{\beta-1}\left[ \hat \zeta \cos{((1+\beta)\theta)} - \hat \xi \sin{((1+\beta)\theta)} \right], \end{align*}
where $\theta := \arg{\hat z}$.  A simple estimate is thus 
\[ | \partial_{\hat \zeta} H | \leq (1+\beta) \sup_{\widehat{\mathcal{S}}_0} \left[ \hat \xi^2 + \hat \zeta^2\right]^{\beta/2}  \leq (1+\beta) (\zeta_0^2+(L/d)^2)^{\beta/2}. \]
We see then that, if
\be |U_2| - |U_3| - (1+\beta) |U_\beta| (\zeta_0^2+(L/d)^2)^{\beta/2} > \sqrt{2} s, \label{comparison requirement 1} \ee
then 
\[ \frac{1}{2} |\nabla u|^2 \geq \frac{1}{2} |\partial_{\hat \zeta} u|^2 > s^2, \qquad \textrm{in } \widehat{\mathcal{S}}_0, \]
and thus it falls into the penalized region.  Consequently, 
\[ \mathcal{Q}(\nabla u) = \nabla \cdot (\hat{\rho} \nabla u) =  \hat{\rho} \Delta u + \hat{\rho}^{\prime} \partial_{\hat \zeta} u = \hat{\rho}^\prime \left( U_2+U_3 \frac{\hat \zeta}{|\zeta_0|} + U_\beta \partial_{\hat \zeta} H \right).  \]
Here we have used the fact that $\Delta u = 0$.  From the line above, we see that $u-\hat w$ satisfies 
\begin{align*} \mathcal{Q}(u) - \mathcal{Q}(\hat w) &= \hat{\rho}^\prime \left( U_2 + U_3 \frac{\hat \zeta}{|\zeta_0|} + U_\beta \partial_{\hat \zeta} H   - \lambda w \right) \\
& \geq \hat{\rho}^\prime \left( U_2 + \max\{ 0, U_3\} + |U_\beta| (1+\beta)(\zeta_0^2+(L/d)^2)^{\beta/2} +   \lambda \| w \|_{L^\infty(\mathcal{S})}  \right). \end{align*}
Since $\hat{\rho}^\prime \leq 0$, in order for this to give the desired inequality, we must have that the parenthetical quantity is nonpositive:
\be  U_2 + \max\{ U_3, 0\} + |U_\beta| (1+\beta)(\zeta_0^2+(L/d)^2)^{\beta/2} +    \lambda \| w \|_{L^\infty(\mathcal{S})} \leq 0. \label{comparison requirement 2} \ee

The above arguments show that for any selection of $U_2, U_3, U_\beta$ satisfying \eqref{comparison requirement 1}--\eqref{comparison requirement 2}, $\mathcal{Q}(\hat w) \leq \mathcal{Q}(u)$ in $\widehat{\mathcal{S}}_0$.  To complete the comparison principle argument, we must ensure that $u \leq \hat w$ on the boundary of this region.  First consider the lower boundary portion  $\{\hat \zeta = 0\}$.    From \eqref{Holder control w_x}, we know that $ {\partial_{\hat \xi} \hat w} \in C^{0,\beta}(\widehat{\mathcal{S}}_0)$, and thus 
\[ | \partial_{\hat \xi} \hat w(\hat \xi, 0) - \partial_{\hat \xi}  \hat w(0,0)| \leq [ \partial_{\hat \xi}  \hat w ]_{0,\beta} |\hat \xi|.\]
On the other hand, by construction
\begin{align*}
 \partial_{\hat \xi} u(\hat \xi, 0) &= U_1 + U_\beta \partial_{\hat \xi} H(\hat \xi, 0) \\
 &= \partial_{\hat \xi} \hat w(0,0) +  U_\beta (1+\beta)  \hat \xi |\hat{\xi}|^{\beta-1} \cos{((1+\beta) \frac{\pi}{2})}.  \end{align*}
 Here we have used the fact that $\theta = \arg{[i \hat \xi]} = (\pi/2) \sgn{\hat \xi}.$  Combining these observations, we see that for $\hat \xi \geq 0$, 
 \begin{align*}
 \partial_{\hat \xi} u(\hat \xi, 0) - \partial_{\hat \xi} w(\hat \xi, 0)  & \leq \partial_{\hat \xi} u(\hat \xi, 0) - \partial_{\hat \xi} w(0, 0) + [ \partial_{\hat \xi} \hat w]_{0,\beta} |\hat{\xi}|^\beta \\
 & = \left[ U_\beta (1+\beta) \cos{((1+\beta) \frac{\pi}{2})} + [ \partial_{\hat \xi} \hat w]_{0,\beta} \right ] |\hat \xi|^\beta,  \end{align*}
 whereas, for $\hat \xi \leq 0$,
 \[ \partial_{\hat \xi} u(\hat \xi, 0) - \partial_{\hat \xi} w(\hat \xi, 0)  \geq \left[ -U_\beta (1+\beta) \cos{((1+\beta) \frac{\pi}{2})} - [ \partial_{\hat \xi} \hat w]_{0,\beta} \right ] |\hat \xi|^\beta.\]
 Thus, if $U_\beta$ is selected with 
 \be U_\beta \geq \frac{[\partial_{\hat \xi} \hat w]_{0,\beta}}{(1+\beta)| \cos{((1+\beta) \frac{\pi}{2})} |},\label{comparison requirement 3} \ee
 then
 \[ \partial_{\hat \xi} u \geq \partial_{\hat \xi} \hat w \textrm{ for } \hat \xi \leq 0, \qquad \textrm{and} \qquad \partial_{\hat \xi} u \leq \partial_{\hat \xi} \hat w \textrm{ for } \hat \xi \geq 0.\]
 As $u(0,0) = w(0,0)$, these imply that $u \leq \hat w$ on $\{ \hat \zeta = 0 \}$.  
 
Next consider the sides of $\widehat{\mathcal{S}}_0$ where $\hat \xi = \pm L/d$.  There we note,
 \[ |H(\pm L/d, \hat \zeta)| =  ( (L/d)^2 + \hat \zeta ^2)^{(1+\beta)/2} | \cos{((1+\beta) \arctan{(\frac{L^2}{\hat \zeta d^2})})} |.  \]
 Therefore, $\hat\zeta \mapsto H(\pm L/d, \hat \zeta)$ is an increasing function, for $\zeta$ sufficiently small, and 
 \[ H(\pm L/d, 0) =  (L/d)^{1+\beta} \cos{((1+\beta)\frac{\pi}{2})} < 0.\]
 We may choose $\epsilon= \epsilon( \beta, L/d) \in (0,\sigma)$ such that 
 \[ H(\pm L, \hat \zeta) \leq \frac{1}{2}(L/d)^{1+\beta} \cos{((1+\beta)\frac{\pi}{2})}   < 0, \qquad \textrm{for all } \hat \zeta \in [0,\epsilon].\]
 Let $\widehat{\mathcal{S}}_{0\epsilon} := \{ (\hat\xi, \hat\zeta)  \in \widehat{\mathcal{S}}_0 : \zeta \in (0, \epsilon)\}$ denote the corresponding subdomain.  Note that this is precisely the reason we consider separately the cases where $\zeta_0 < -\sigma$ and $\zeta_0 \geq - \sigma$:  in general, $\epsilon$ will vanish as we approach the top of the fluid domain $\{ \zeta = 0\}$.  
  
 Now, on the sides of $\widehat{\mathcal{S}}_{0\epsilon}$ we have
 \be \hat w - u \geq \hat w - \hat w(0,0) \mp U_1 \frac{L}{d} - U_2 \hat \zeta - \frac{1}{2} U_3 \frac{\hat \zeta^2}{|\zeta_0|} - U_\beta H, \qquad \textrm{on } \{ \hat \xi = \pm L/d \}.\label{comparison sides} \ee 
 Since $\hat w \geq 0$ by Lemma \ref{wave of elevation lemma}, if we require that 
 \be U_2, U_3 \leq 0 \label{comparison requirement 4} \ee
 then this becomes
 \[ \hat w - u \geq -|U_1| (L/d) - U_\beta h - \| w \|_{L^\infty}, \qquad \textrm{on } \{ \hat \xi = \pm L/d \}.\]
 We infer that, for 
 \be U_\beta \geq \frac{2k |U_1| + 2\| w \|_{L^\infty}}{(L/d)^{1+\beta} \cos{((1+\beta)\frac{\pi}{2})}},\label{comparison requirement 5} \ee
 one has $w \geq u$ on the horizontal boundary portion of $\widehat{\mathcal{S}}_{0\epsilon}$.
 
 Finally, consider the top of $\widehat{\mathcal{S}}_{0\epsilon}$.  
 \begin{align*}
 \hat w - u & \geq \hat w - \hat w(0,0) - U_1 \hat \xi - U_2 \epsilon - \frac{1}{2} U_3 \frac{\epsilon^2}{|\zeta_0|}  - U_\beta ((L/d)^2 + \epsilon^2)^{(\beta+1)/2} \\
 & \geq -2 \| w \|_{L^\infty} - U_2 \epsilon - U_\beta ((L/d)^2 + \epsilon^2)^{(\beta+1)/2}, \qquad \textrm{on } \{ \hat \zeta = \epsilon \}. \end{align*}
 Taking
 \be |U_2| = -U_2 \geq \frac{2 \| w \|_{L^\infty} + U_\beta ((L/d)^2+\epsilon^2)^{(\beta+1)/2}}{\epsilon} \label{comparison requirement 6} \ee
 ensures that $w \geq u$ on the upper boundary portion of $\widehat{\mathcal{S}}_{0\epsilon}$.  
 
Collecting these statements together, we have proved the following:  let $U_3 := 0$, and define $U_\beta = U_\beta(\beta, \| \partial_\xi w \|_{C^{0,\beta}}, \| w \|_{L^\infty}, L/d)$ by
\[ U_\beta := \max\left\{\frac{[\partial_{\hat \xi} \hat w]_{0,\beta}}{(1+\beta)| \cos{((1+\beta) \frac{\pi}{2})} |} , \, \frac{2(1+L/d)\| w \|_{L^\infty}}{(L/d)^{1+\beta} \cos{((1+\beta)\frac{\pi}{2})}}  \right\}.  \]
This guarantees that \eqref{comparison requirement 3} and \eqref{comparison requirement 5} are satisfied.  Set $U_2 = U_2(\beta, \| w \|_{L^\infty} \| \partial_\xi w \|_{C^{0,\beta}}, \lambda, L/d, s)$ to be
\begin{align*} |U_2| = -U_2 & := \max\Big\{ \sqrt{2}s + (1+\beta)|U_\beta|(1+(L/d)^2)^{\beta/2}, \, (1+\beta)|U_\beta|(1+(L/d)^2)^{\beta/2} + \lambda \| w \|_{L^\infty},  \\
& \qquad\qquad\qquad \frac{2 \| w \|_{L^\infty} + U_\beta ((L/d)^2+\epsilon^2)^{(\beta+1)/2}}{\epsilon} \Big\}, \end{align*}
ensuring that \eqref{comparison requirement 1}--\eqref{comparison requirement 2}, \eqref{comparison requirement 4}, and \eqref{comparison requirement 6} hold.  Then $u \leq \hat w$ on $\partial \widehat{\mathcal{S}}_{0\epsilon}$, while $\mathcal{Q}(\hat w) \leq 0 \leq \mathcal{Q}(u)$ in $\widehat{\mathcal{S}}_{0\epsilon}$.  Applying the quasilinear comparison principle \cite[Theorem 10.7]{gilbarg2001elliptic}, we conclude that $u \leq \hat w$ in $\widehat{\Omega}_{0\epsilon}$.  But then, since $u(0,0) = \hat w(0,0)$, we must have that 
\[ \partial_{\hat \zeta} \hat w(0,0) \geq \partial_{\hat \zeta} u(0,0) = U_2.\]
Repeating the above argument with $-\hat w$ in place of $\hat w$, we find likewise that
\[ \partial_{\hat \zeta} \hat w(0,0) \leq |U_2|.\]
From Lemma \ref{holder control w_x lemma}, we see that $\| w \|_{C^{0,\beta}}$ is controlled by $\| \nabla w \|_{L^2(\mathcal{S})}$ and $\lambda$, $\mathring{\rho}(0), \mathring{\rho}(-1)$.  Therefore, 
\[ | w_\zeta | \leq \sqrt{2} s + C_3 \max\{  \| \nabla w \|_{L^2(\mathcal{S})},  \| \nabla w \|_{L^2(\mathcal{S})}^2 \}, \qquad \textrm{for } \zeta \in [-1, -\sigma] \]
where $C_3 = C_3(\sigma, \lambda, L/d, \mathring{\rho}(0), \mathring{\rho}(-1)).$  

Next consider the remainder of the domain where $\zeta \in [-\sigma, 0]$.  Let $(\xi_0, \zeta_0)$ be a point in this subdomain, and change coordinates as before.  We use the same form of comparison function $u$, and in fact take $U_2$ and $U_\beta$ exactly as above.    Then $\| \nabla u \| \geq \sqrt{2} s$, and $\mathcal{Q}(u) \geq \mathcal{Q}(\hat w)$ in $\widehat{\mathcal{S}}_0$.     We must show that $u \leq w$ on the entire boundary, $\partial \widehat{\mathcal{S}}_0$.   For the bottom boundary portion, this works exactly as before.  Moreover, using the crude estimate 
\[ |H(\pm L/d, \hat \zeta)| \leq ((L/d)^2+\hat \zeta^2)^{(\beta+1)/2}, \qquad \textrm{on } \{ \hat \xi = \pm L/d \}, \]
we see from \eqref{comparison sides} that, if
\[ |U_3| = -U_3 \geq 2 \| w \|_{L^\infty} + (L/d) |U_1| + |U_\beta| ((L/d)^2+ 1)^{(\beta+1)/2} ,\]
then 
\[ \hat w - u \geq 0, \qquad \textrm{on } \{ (\pm L/d, \hat \zeta) : \hat \zeta \in [ 0, |\zeta_0| ] \}.\]

Lastly, on the free surface $\{ \hat \zeta = |\zeta_0|\}$, we have
\begin{align*} \hat w - u &= \hat w - \hat w(0,0) - U_1 \hat x - U_2 |\zeta_0| - \frac{1}{2} U_3 |\zeta_0| - U_\beta H \\
& \geq -2 \| w \|_{L^\infty} - (L/d) |U_1| + |U_2| \sigma + \frac{1}{2} |U_3| \sigma - U_\beta ((L/d)^2 + \sigma^2)^{(\beta+1)/2} \qquad \textrm{on } \{ \hat \zeta = |\zeta_0| \}.\end{align*}
We have proved, therefore, that for $U_2, U_\beta$ defined as above, and with 
\[ |U_3| = -U_3 := \frac{4\| w \|_{L^\infty} + 2  (L/d) \| \partial_\xi w\|_{L^\infty} + 2|U_\beta| ((L/d)^2+\sigma^2)^{(\beta+1)/2}}{\sigma},\]
it holds that  $u \leq w$ on $\partial \widehat{\Omega}_0$.  This implies, along the same lines as above, that we have 
\[ \| \partial_\zeta w \|_{L^\infty} < \sqrt{2} s + C_3 \max\{  \| \nabla w \|_{L^2(\mathcal{S})},  \| \nabla w \|_{L^2(\mathcal{S})}^2 \},   \]
where $C_3 = C_3(\lambda, L/d, \mathring{\rho}(0), \mathring{\rho}(-1))$.
 \end{proof}
 
 \begin{lemma}[Control of $\nabla \partial_\xi^2 w$] \label{control of nabla w_xx lemma} There {exist} $s_2 > 0$ and $\gamma \in (0,1)$ such that, if 
 $$(w, \mathring{\rho}, \lambda) \in C_{\textrm{per}}^3(\overline{S}) \times C^1([-1,0]) \times \mathbb{R} $$ 
 is a solution to the penalized problem \eqref{Turner penalized problem} with $s \in (0,s_2)$, then 
\be  \|\nabla \partial_\xi^2 w \|^2_{L^2(\mathcal{S})} \leq C_4 \|\nabla w\|^2_{L^2(\mathcal{S})}, \label{w_xx H1 esimate} \ee
and
\be \| \partial_\xi^2 w \|_{C^{0, \gamma}(\mathcal{S})} \leq C_4 \max\{ \| \nabla w \|_{L^2(\mathcal{S})}^2, \, \| \nabla w \|_{L^2(\mathcal{S})}\},  \label{w_xx holder estimate} \ee
where $C_4 = C_4(\lambda, \mathring{\rho}(0), \mathring{\rho}(-1))$.
 \end{lemma}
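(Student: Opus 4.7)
The plan is to mimic the proofs of Lemmas \ref{control of nabla w_x lemma} and \ref{holder control w_x lemma}, applied one $\xi$-derivative higher. Set $u := \partial_\xi^2 w$. Differentiating the linear equation \eqref{v problem} satisfied by $v := \partial_\xi w$ once more in $\xi$, and using the chain rule identity $\partial_\xi A(\nabla w) = (D^3 a)(\nabla w)[\nabla v, \cdot]$, one obtains the divergence-form linear boundary-value problem
\[
\begin{cases}
\nabla \cdot \bigl(\mathring{\rho} A(\nabla w) \nabla u\bigr) - \lambda \partial_\zeta(\mathring{\rho} u) + \lambda \mathring{\rho} \partial_\zeta u = -\nabla \cdot \mathbf{F} & \text{in } \mathcal{S},\\
u = 0 & \text{on } \{\zeta = -1\},\\
\mathring{\rho} A(\nabla w) \nabla u \cdot \mathbf{e}_\zeta - \lambda \mathring{\rho} u = -F_2 & \text{on } \{\zeta = 0\},
\end{cases}
\]
where $\mathbf{F} = \mathring{\rho}(D^3 a)(\nabla w)[\nabla v, \nabla v]$ is pointwise controlled by $|\nabla v|^2$, thanks to Lemma \ref{ellipticity lemma}(vi), and an analogous boundary forcing $F_2$ is produced by differentiating the co-normal condition.

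For the $H^1$ bound \eqref{w_xx H1 esimate} I test against $u$ and integrate by parts exactly as in Lemma \ref{control of nabla w_x lemma}. The principal part gives coercivity $\nu \int_{\mathcal{S}} \mathring{\rho}|\nabla u|^2$; the drift and boundary terms are absorbed using $u=0$ on $\{\zeta=-1\}$, Poincar\'e, and Young's inequality, producing a contribution of order $\lambda^2 \int \mathring{\rho} u^2$ as in \eqref{def C1}; and the forcing yields $C\|\nabla v\|_{L^4}^2 \|\nabla u\|_{L^2}$ after Cauchy-Schwarz on $\int \mathbf{F}\cdot \nabla u$ (and the analogous trace estimate at $\{\zeta=0\}$). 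To close by Young's inequality I need an $L^4$ bound on $\nabla v$.

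That $L^4$ bound is the crux. The equation \eqref{v problem} for $v$ is linear, divergence-form, and elliptic with coefficients that are H\"older in $\xi$ and bounded-measurable in $\zeta$ — precisely the regime of the Dong-Kim theory already invoked in Theorem \ref{Calpha continuity theorem}. Meyers-type higher integrability then gives $\|\nabla v\|_{L^4(\mathcal{S})} \lesssim \|\nabla v\|_{L^2(\mathcal{S})} + \|v\|_{L^\infty(\mathcal{S})}$, and combining this with Lemmas \ref{control of nabla w_x lemma} and \ref{holder control w_x lemma} gives $\|\nabla v\|_{L^4}^2 \lesssim \|\nabla w\|_{L^2}^2$ in the small-amplitude regime (so that higher powers of $\|\nabla w\|_{L^2}$ can be reabsorbed). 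Combined with step two this produces \eqref{w_xx H1 esimate}. For the H\"older estimate \eqref{w_xx holder estimate}, with the $H^1$ bound on $u$ in hand the forcing $\mathbf{F}$ lies in some $L^q(\mathcal{S})$ with $q > 2$, so applying the boundary H\"older regularity used in Lemma \ref{holder control w_x lemma} (e.g.\ \cite[Theorem 8.29]{gilbarg2001elliptic}, adapted to handle the co-normal boundary) to the linear problem for $u$ delivers the claim.

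The main obstacle is ensuring that every constant in the chain depends only on $\lambda$, $\mathring{\rho}(0)$, $\mathring{\rho}(-1)$, and the $W^{1,\infty}$ bound on $w$ already established in Lemmas \ref{holder control w_x lemma} and \ref{preliminary control of w_eta lemma} — rather than on any Sobolev norm of $\mathring{\rho}^\prime$. This uniformity is exactly what is needed so that the estimate survives the smooth-to-$L^\infty$ approximation scheme underlying Theorem \ref{main continuity theorem}, and it is precisely what the Meyers/Dong-Kim step provides, since their constants require only $L^\infty$ control of the $\zeta$-dependent coefficients.
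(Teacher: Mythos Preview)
Your outline---differentiate \eqref{v problem} once more in $\xi$, write the resulting linear problem for $u=\partial_\xi^2 w$ with forcing $\mathbf{F}\sim \mathring{\rho}(D^3 a)(\nabla w)[\nabla v,\nabla v]$, then test against $u$ and invoke De Giorgi--Nash for the H\"older part---is the same skeleton the paper uses (it cites \cite[Lemma 3.4]{turner1981internal} for the energy bound and then repeats the argument of Lemma~\ref{holder control w_x lemma} verbatim).  The difference is in how the quadratic forcing is handled, and this is where your proposal has a genuine gap.

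You reduce to an $L^4$ bound on $\nabla v$ and appeal to ``Meyers-type higher integrability'' and the Dong--Kim theory.  Neither invocation is justified with the required uniformity.  Meyers gives $\nabla v\in L^{2+\varepsilon}$ with $\varepsilon$ depending on the ellipticity ratio of $\mathring{\rho}\,A(\nabla w)$; since $\mathring{\rho}$ ranges over $[\mathring{\rho}(0),\mathring{\rho}(-1)]$, that ratio is not close to $1$ and there is no reason $\varepsilon$ reaches $2$.  The Dong--Kim estimates used in Theorem~\ref{Calpha continuity theorem} require the coefficients to have small mean oscillation in $\xi$; there the coefficient was $\mathring{\rho}_* D^2 f(\nabla w_*)$ with $w_*$ the \emph{fixed} $C^{1,\alpha}$ background, whereas here the coefficient involves $\nabla w$, and at this stage you only know $w_\zeta\in L^\infty$ (Lemma~\ref{preliminary control of w_eta lemma}) with no quantitative $\xi$-regularity.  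One can rescue this by observing $D^2 a=I+\mathcal{O}(s)$ uniformly, so the $\xi$-oscillation of $A(\nabla w)$ is $\mathcal{O}(s)$ regardless of $w$; but you have not made that argument, and it forces $s_2$ to depend on $\mathring{\rho}(-1)$.

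Even granting $\|\nabla v\|_{L^4}\lesssim \|\nabla w\|_{L^2}$, your energy step produces
\[
\|\nabla u\|_{L^2}^2 \;\lesssim\; \|u\|_{L^2}^2 + \|\nabla v\|_{L^4}^4 \;\lesssim\; \|\nabla w\|_{L^2}^2 + \|\nabla w\|_{L^2}^4,
\]
which is \emph{not} the linear estimate \eqref{w_xx H1 esimate}.  Your parenthetical ``in the small-amplitude regime (so that higher powers can be reabsorbed)'' is not a hypothesis of the lemma: the statement asserts \eqref{w_xx H1 esimate} for \emph{any} solution with $s<s_2$, with no smallness on $\|\nabla w\|_{L^2}$.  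Turner's argument in \cite{turner1981internal} obtains the linear bound directly, without passing through a Meyers step, by exploiting the structure of the cubic term (in particular, that the third derivatives $a_{ijk}$ are constant up to $\mathcal{O}(s)$ and several vanish at the origin; see Lemma~\ref{ellipticity lemma}(vi)) together with integration-by-parts identities that avoid placing both copies of $\nabla v$ in $L^4$.  That structural step is what your sketch is missing.
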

 \begin{proof}  The first statement \eqref{w_xx H1 esimate} follows exactly as in \cite[Lemma 3.4]{turner1981internal}, as the argument there does not involve any reference to the width of the layers of the limiting rescaled streamline density.  The second statement \eqref{w_xx holder estimate} is then proved by repeating the arguments leading to Lemma \ref{holder control w_x lemma}:  notice that $w_{\xi\xi}$ also solves a linear divergence form elliptic PDE.  By appealing to the same \emph{a priori} estimates, we see that it can be controlled in terms of $\nabla w_{\xi\xi}$, which can then be estimated by $ \nabla w$ according to the first statement. 
 \end{proof}

In the next lemma, we improve the $L^\infty$ bound of $w_\zeta$ found in Lemma \ref{preliminary control of w_eta lemma} by removing its dependence on the penalization parameter $s$.

\begin{lemma}[Improved $L^\infty$ control of $w_\zeta$]\label{control of w_eta in terms of R lemma}  There exists $s_3 > 0$ such that, if $(w, \mathring{\rho}, \lambda)$ has the regularity dictated in Lemma \ref{control of nabla w_xx lemma} and solves the penalized problem \eqref{Turner penalized problem} for $s \leq s_3$, then,
\be\label{Linfty bound of w_eta}
\|w_\zeta\|_{L^\infty(\mathcal{S})} \leq C_5 \left( \|\nabla w\|_{L^2(\mathcal{S})} + \|\nabla w\|^2_{L^2(\mathcal{S})} + \|\nabla w\|^4_{L^2(\mathcal{S})} \right) \ee
where $C_5 = C_5(\lambda, \mathring{\rho}(0), \mathring{\rho}(-d)) > 0.$ 
\end{lemma}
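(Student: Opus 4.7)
The goal is to refine the comparison argument of Lemma \ref{preliminary control of w_eta lemma} and remove the $\sqrt{2}\,s$ contribution from its $L^\infty$ bound on $w_\zeta$. This becomes feasible once the sharp H\"older estimates on $w_\xi$ and $w_{\xi\xi}$ from Lemmas \ref{holder control w_x lemma} and \ref{control of nabla w_xx lemma} are invoked; the $\|\nabla w\|_{L^2}^4$ term in \eqref{Linfty bound of w_eta} will arise from squaring the $\|\nabla w\|_{L^2}^2$ bounds appearing there.

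First, I would choose $s_3 \in (0, s_2)$ so small that, whenever $\|\nabla w\|_{L^2}$ is bounded by a universal constant, Lemma \ref{preliminary control of w_eta lemma} combined with Lemma \ref{holder control w_x lemma} forces $|\nabla w|^2 < s$ pointwise in $\mathcal{S}$. Then $\Phi_s(|\nabla w|^2) \equiv 1$, so the penalization is inactive and $w$ satisfies the unpenalized (physical) Ter-Krikorov system \eqref{ter-krikorov eq} with $a$ replaced by $f$. For fixed $\xi$, integrating the interior equation from $\zeta$ to $0$, the $\partial_\zeta$-derivative terms telescope and the conormal boundary condition \eqref{ter-krikorov conormal cond} annihilates the trace at $\{\zeta = 0\}$, yielding
\[
\mathring{\rho}(\zeta)\, f_2(\nabla w)(\xi,\zeta) \;=\; \lambda\mathring{\rho}(0)\,w(\xi,0) \;-\;\lambda \int_\zeta^0 \mathring{\rho}^\prime\, w\,d\zeta' \;+\; \partial_\xi\!\int_\zeta^0 \mathring{\rho}\, f_1(\nabla w)\,d\zeta'.
\]

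The first two terms on the right are dominated by $\mathring{\rho}(-1)\|w\|_{L^\infty}$, which is itself controlled by $\|\nabla w\|_{L^2}$ via Sobolev embedding (using $w = 0$ on $\{\zeta=-1\}$). For the third term I would expand
\[
\partial_\xi f_1(\nabla w) \;=\; \frac{w_{\xi\xi}}{1+w_\zeta} \;-\; \frac{w_\xi w_{\xi\zeta}}{(1+w_\zeta)^2},
\]
estimating the $w_{\xi\xi}$ piece directly via Lemma \ref{control of nabla w_xx lemma}, and converting the problematic $w_{\xi\zeta}$-factor through the identity $w_\xi w_{\xi\zeta} = \tfrac{1}{2}\partial_\zeta(w_\xi^2)$ followed by an integration by parts in $\zeta$. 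The resulting boundary traces are $\mathcal{O}(\|w_\xi\|_{L^\infty}^2)$, while the interior remainder contains only $|\mathring{\rho}^\prime|$ (whose integral over $[-1,0]$ equals $\mathring{\rho}(-1)-1$) and $|w_{\zeta\zeta}|$ times $w_\xi^2$; the latter closes upon solving for $w_{\zeta\zeta}$ from the PDE in terms of $w_{\xi\xi}$, $w_{\xi\zeta}$, and $w$, with $\|w_{\xi\zeta}\|_{L^2}$ bounded by Lemma \ref{control of nabla w_x lemma}.

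Finally, since $f_2(p_1,p_2) = p_2 + \mathcal{O}(|p|^2)$, the identity above translates into
\[
\mathring{\rho}(\zeta)\, w_\zeta(\xi,\zeta) \;=\; (\text{RHS above})\; +\; \mathcal{O}(\|\nabla w\|_{L^\infty}^2),
\]
and the quadratic correction is absorbed into the left-hand side by choosing $s_3$ small enough that $\|\nabla w\|_{L^\infty}$ is small. Combining with $\|w_{\xi\xi}\|_{L^\infty}, \|w_\xi\|_{L^\infty} \leq C\max\{\|\nabla w\|_{L^2},\|\nabla w\|_{L^2}^2\}$ and $\|w_\xi\|_{L^\infty}^2 \leq C\max\{\|\nabla w\|_{L^2}^2,\|\nabla w\|_{L^2}^4\}$ yields \eqref{Linfty bound of w_eta}. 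The main obstacle is the $w_{\xi\zeta}$ integration by parts: it must be performed so as to avoid any $\mathring{\rho}^{\prime\prime}$ contribution (which we do not assume bounded), and the unavoidable $w_{\zeta\zeta}$ remainder must be controlled via the PDE without creating a circular dependence on $\|w_\zeta\|_{L^\infty}$ itself. The closure of this loop rests on the fact that the $L^2$-norm, rather than the $L^\infty$-norm, of $w_{\xi\zeta}$ is enough once the right rearrangement has been made.
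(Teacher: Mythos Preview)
Your approach differs from the paper's in two structural ways, and both introduce genuine gaps.

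\textbf{On deactivating the penalization.} Your first step is to argue that the cutoff is inactive, so that you may work with $f$ rather than $a$. But Lemma~\ref{preliminary control of w_eta lemma} only gives $\|w_\zeta\|_{L^\infty}\leq \sqrt{2}\,s + C_3 M$ with $M=\max\{\|\nabla w\|_{L^2},\|\nabla w\|_{L^2}^2\}$, so forcing $|\nabla w|^2<s$ pointwise requires a smallness assumption on $\|\nabla w\|_{L^2}$ that the lemma does not carry. The paper never makes this reduction: it integrates the \emph{penalized} equation directly and only at the very end uses the expansion $a_2(p_1,p_2;s)=p_2(1+\mathcal{O}(s))+\mathcal{O}(p_1^2)$, which is valid for the penalized $a$ as stated in Lemma~\ref{ellipticity lemma}. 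You could repair this step simply by replacing $f$ with $a$ throughout; no deactivation is needed.

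\textbf{On the $w_{\xi\zeta}$ term.} The more serious issue is your closure argument. After the integration by parts in $\zeta$ on $w_\xi w_{\xi\zeta}$, you obtain a term $\int_\zeta^0 w_\xi^2\,|w_{\zeta\zeta}|\,d\zeta'$, which via the PDE you reduce to (among others) $\int_\zeta^0 w_\xi^2\,|w_{\xi\zeta}|\,d\zeta'$. You then invoke $\|w_{\xi\zeta}\|_{L^2}$ from Lemma~\ref{control of nabla w_x lemma}. But that is the two–dimensional $L^2(\mathcal{S})$ norm, whereas the integral you need to bound is a one–dimensional integral along a single vertical slice $\{\xi=t\}$; the 2D $L^2$ norm gives no uniform-in-$\xi$ control over such slices. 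None of Lemmas~\ref{control of nabla w_x lemma}--\ref{control of nabla w_xx lemma} furnish an $L^\infty$ bound on $w_{\xi\zeta}$ at this stage (that comes only later, in the proof of Theorem~\ref{main continuity theorem}, via Ladyzhenskaya--Ural'tseva). So your loop does not close as stated.

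The paper avoids this difficulty with a periodicity trick you did not anticipate. After integrating the equation in $\zeta$ between a reference height $\zeta_0$ (chosen by the mean value theorem so that $|w_\zeta(t,\zeta_0)|\lesssim\|\nabla w\|_{L^2}$, using an anisotropic Sobolev bound $\|w\|_{L^\infty}\lesssim\|\nabla w\|_{L^2}$) and an arbitrary $\zeta_0+\delta$, the troublesome term is $\int_{\zeta_0}^{\zeta_0+\delta}\mathring{\rho}\,\partial_\xi(a_1(\nabla w(t,\zeta)))\,d\zeta$. Since $a_1(\nabla w)$ is periodic in $\xi$, for each fixed $\zeta$ there is a point $\xi_0(\zeta)$ where $\partial_\xi a_1(\nabla w)=0$; writing $\partial_\xi a_1(\nabla w(t,\zeta))=\int_{\xi_0}^t\partial_\xi^2(a_1(\nabla w))\,d\xi$ converts the slice integral into a genuine double integral over a subset of $\mathcal{S}$, which is then controlled by $\|\nabla w_{\xi\xi}\|_{L^2(\mathcal{S})}$ and $\|\nabla w_\xi\|_{L^2(\mathcal{S})}$ via Cauchy--Schwarz. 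This is precisely the device that lets the 2D estimates of Lemmas~\ref{control of nabla w_x lemma} and~\ref{control of nabla w_xx lemma} enter.
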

\begin{proof}
First, we prove an anisotropic Sobolev-type inequality.  Let $\tilde{w}$ be an extension of $w$ to $\mathbb{R}^2$.  By a proper choice of cut-off function, and using the Sobolev extension theorem, we can arrange it so that 
$$
\|\tilde{w}\|_{L^\infty(\mathbb{R}^2)} = \|w\|_{L^\infty(\mathcal{S})}, \ \|\nabla \tilde{w}\|_{L^2(\mathbb{R}^2)} \leq C \|\nabla w\|_{L^2(\mathcal{S})}, \ \|\nabla \tilde{w}_x\|_{L^2(\mathbb{R}^2)} \leq C \|\nabla w_x\|_{L^2(\mathcal{S})}.
$$
On the other hand we have that for any $f = f(x,y) \in H^1(\mathbb{R}^2)$ with $\nabla f_x \in L^2(\mathbb{R}^2)$, 
\begin{align*}
\|\hat{f}\|_{L^1(\mathbb{R}^2)} & \leq \left( \int_{\mathbb{R}^2} \left[ 1 + (\xi_1^2 + \xi_2^2) (1+\xi_1^2) \right] |\hat{f}|^2\ d\xi_1d\xi_2 \right)^{1/2} \left( \int_{\mathbb{R}^2} {1\over 1 + (\xi_1^2 + \xi_2^2) (1+\xi_1^2)} \ d\xi_1d\xi_2\right)^{1/2}\\
& \leq C \left( \|f\|_{L^2(\mathbb{R}^2)} + \|\nabla f\|_{L^2(\mathbb{R}^2)} + \|\nabla f_x\|_{L^2(\mathbb{R}^2)} \right).
\end{align*}
Applying this reasoning to $\tilde w$ and restricting the domain to $\mathcal{S}$ we obtain
\begin{equation*}
\|w\|_{L^\infty(\mathcal{S})} \leq C \left( \|w\|_{L^2(\mathcal{S})} + \|\nabla w\|_{L^2(\mathcal{S})} + \|\nabla w_\xi\|_{L^2(\mathcal{S})} \right)
\end{equation*}
which, by a simple use of Poincar\'e inequality, is reduced to 
\begin{equation*}
\|w\|_{L^\infty(\mathcal{S})} \leq C \left( \|\nabla w\|_{L^2(\mathcal{S})} + \|\nabla w_\xi\|_{L^2(\mathcal{S})} \right).
\end{equation*}
Then using \eqref{nabla w_x control} we conclude that
\begin{equation}\label{w control}
\|w\|_{L^\infty(\mathcal{S})} \leq C_0  \|\nabla w\|_{L^2(\mathcal{S})} 
\end{equation}
where $C_0 = C_0(\lambda, \mathring{\rho}(0),  \mathring{\rho}(-d))$.


Now consider $w$ on some vertical line $\{\xi = t\}$ in $\mathcal{S}$. Choose two points $(t, \tilde\zeta), (t, \tilde\zeta + d/2) \in \mathcal{S}$. Then from the mean value theorem, there exists some point $\zeta_0 \in (\tilde\zeta, \tilde\zeta + d/2)$ such that
\[
w(t, \tilde\zeta + {1\over2}) - w(t, \tilde\zeta) = {1\over 2} w_\zeta(t, \zeta_0).
\]
Therefore from \eqref{w control} we know that
\begin{equation}\label{w_eta one point}
|w_\zeta(t, \zeta_0)| \leq 4 C_0 \|\nabla w\|_{L^2(\mathcal{S})}
\end{equation}

Fix $\delta > 0$ such that $(t, \zeta_0 + \delta)$ in $\mathcal{S}$.  We integrate equation \eqref{Turner penalized problem} from $(t, \zeta_0)$ to $(t, \zeta_0 + \delta)$ to obtain
\begin{equation}\label{integral eqn}
\begin{split}
\int^{\zeta_0+\delta}_{\zeta_0} \mathring{\rho}(\zeta) \partial_\xi (a_1(\nabla w(t, \zeta))) \ d\zeta + \mathring{\rho}(\zeta) a_2(\nabla w(t, \zeta)) |^{\zeta_0 + \delta}_{\zeta_0} & = \lambda \mathring{\rho}(\zeta) w(t, \zeta)|^{\zeta_0 + \delta}_{\zeta_0} \\
&  \qquad - \lambda \int^{\zeta_0 + \delta}_{\zeta_0} \mathring{\rho}(\zeta)w_\zeta(t, \zeta) \ d\zeta.
\end{split}
\end{equation}
Note the we are working in a periodic setting, $w$, and hence $a(\nabla w)$, is periodic in $\xi$. Therefore, for each $\zeta\in [-1, 0]$, there exists an $\xi_0 \in [-L/d, L/d]$ such that $\partial_\xi (a_1(\nabla w(\xi_0, \zeta))) = 0$. So the first term on the left-hand side can be estimated as follows.
\begin{align*}
 \int^{\zeta_0+\delta}_{\zeta_0} \mathring{\rho}(\zeta) \partial_\xi (a_1(\nabla w(t, \zeta))) \ d\zeta  & =  \int^{\zeta_0+\delta}_{\zeta_0} \int^t_{\xi_0} \mathring{\rho}(\zeta) \partial^2_\xi (a_1(\nabla w(\xi, \zeta))) \, d\xi \,  d\zeta    \\
& = \int^{\zeta_0+\delta}_{\zeta_0} \int^t_{\xi_0} \mathring{\rho}(\zeta) \left[ \nabla a_1 \cdot \nabla w_{\xi\xi} + (\nabla w_\xi)^T \nabla^2 a_1 \nabla w_{\xi} \right] \ d\xi \,  d\zeta.   
\end{align*}
Recall that Lemma \ref{ellipticity lemma} states that 
$$ |\nabla a_1|, |\nabla^2 a_1| \leq C + \mathcal{O}(s). $$
With that in mind, we denote $v = w_\xi$, and continue the estimate to find 
\begin{equation}\label{intest0}
\begin{split}
\left|  \int^{\zeta_0+\delta}_{\zeta_0} \mathring{\rho}(\zeta) \partial_\xi (a_1(\nabla w(t, \zeta))) \ d\zeta  \right| & \leq (C + \mathcal{O}(s)) \|\mathring{\rho}\|_{L^\infty}  \int^{\zeta_0+\delta}_{\zeta_0} \int^t_{-\frac{L}{d}} |\nabla v_\xi| + |\nabla v|^2 \ d\xi d\zeta \\
& \leq (C + \mathcal{O}(s)) \|\mathring{\rho}\|_{L^\infty} \left( \delta^{1/2} \|\nabla {v_\xi}\|_{L^2(\mathcal{S})} + \|\nabla v\|^2_{L^2(\mathcal{S})} \right).
\end{split}
\end{equation}
Then Lemma \ref{control of nabla w_xx lemma} and \eqref{nabla w_x control} furnish the bound
\begin{equation}\label{intest1}
\left|  \int^{\zeta_0+\delta}_{\zeta_0} \mathring{\rho}(\zeta) \partial_\xi (a_1(\nabla w(t, \zeta))) \ d\zeta  \right| \leq C(1 + \mathcal{O}(s)) \|\mathring{\rho}\|_{L^\infty} \left( \delta^{1/2} \|\nabla w\|_{L^2(\mathcal{S})} + \|\nabla w\|^2_{L^2(\mathcal{S})} \right),
\end{equation}
where $C = C(\lambda,  \mathring{\rho}(0), \mathring{\rho}(-1))$.

The second term on the right-hand side of \eqref{integral eqn} can be estimated a similar way:
\begin{equation}\label{intest2}
\left| \lambda \int^{\zeta_0 + \delta}_{\zeta_0} \mathring{\rho}(\zeta)w_\zeta(t, \zeta) \ d\zeta\right| \leq C_1 \lambda\|\mathring{\rho}\|_{L^\infty} \delta^{1/2} \|\nabla w\|_{L^2(\mathcal{S})}.
\end{equation}
The first term on the right-hand side of \eqref{integral eqn} is controlled via \eqref{w control}:
\begin{equation}\label{intest3}
\left|\lambda \mathring{\rho}(\zeta) w(t, \zeta)|^{\zeta_0 + \delta}_{\zeta_0} \right| \leq 2C_0\lambda \|\mathring{\rho}\|_{L^\infty}  \|\nabla w\|_{L^2(\mathcal{S})}.
\end{equation}

Combining \eqref{intest1}--\eqref{intest3}, we infer that
\begin{equation}\label{intest4}
\left| a_2(\nabla w(s, \zeta)) |^{\zeta_0 + \delta}_{\zeta_0} \right| \leq C (1 + \mathcal{O}(s)) \left[ (1+ \delta^{1/2}) \|\nabla w\|_{L^2(\mathcal{S})} + \|\nabla w\|^2_{L^2(\mathcal{S})} \right],
\end{equation}
where $C = C(\lambda,  \mathring{\rho}(0), \mathring{\rho}(-1))$.

From Lemma \ref{ellipticity lemma} we see that 
$$ a_2(p_1, p_2; s) = p_2(1+\mathcal{O}(s)) + \mathcal{O}(p_1^2),$$
and hence we may choose $s_0$ small enough so that for $s\leq s_0$, one has $|\mathcal{O}(s)| < 1/2$. In \eqref{intest4} we are evaluating $a_2$ at  $p_1 = w_\xi(t, \zeta_0 +\delta)$, and $p_1 = w_\xi(t, \zeta_0)$.  But, from \eqref{Holder control w_x} we know that
\[
\|w_\xi\|_{L^\infty(\mathcal{S})} \leq C_2 \max\{ \| \nabla w \|_{L^2(\mathcal{S})}^2, \, \| \nabla w \|_{L^2(\mathcal{S})}\}.
\]
Therefore,
\begin{equation}\label{intest5}
\left| w_\zeta(t, \zeta_0 +\delta) - w_\zeta(t, \zeta_0) \right| \leq C \left( \|\nabla w\|_{L^2(\mathcal{S})} + \|\nabla w\|^2_{L^2(\mathcal{S})} + \|\nabla w\|^4_{L^2(\mathcal{S})} \right),
\end{equation}
where $C = C(\lambda,  \mathring{\rho}(0), \mathring{\rho}(-d))$. This, together with \eqref{w_eta one point}, implies that
\begin{equation*}
\|w_\zeta\|_{L^\infty(\mathcal{S})} \leq C \left( \|\nabla w\|_{L^2(\mathcal{S})} + \|\nabla w\|^2_{L^2(\mathcal{S})} + \|\nabla w\|^4_{L^2(\mathcal{S})} \right).
\end{equation*}
The proof of the lemma is complete.
\end{proof}

\subsection{Proof of continuous dependence for the Ter-Krikorov problem} \label{continuity proof section}
With the \emph{a priori} estimates established in \S\ref{a priori section}, we are now in a position to prove our main result of this section.  

\begin{proof}[Proof of Theorem \ref{main continuity theorem}]
Put $S_{\textrm{max}} := \min\{ s_0, s_1, s_2, s_3\}$, where $s_i$ is given as in Lemma \ref{ellipticity lemma}, Lemma \ref{control of nabla w_x lemma}, Lemma \ref{control of nabla w_xx lemma}, and Lemma \ref{control of w_eta in terms of R lemma}.  Let $(\mathring{\rho}_* , w_*, \lambda_*)$ given as in the statement of the theorem.  Then, $w_*$ satisfies \eqref{assumptions on w_*}, and hence Theorem \ref{Calpha continuity theorem} may be applied. 

By Lemma \ref{wave of elevation lemma}, there exists a neighborhood $\mathring{\mathcal{Z}} \times \Lambda$ of $(\mathring{\rho}_*, \lambda_*)$ in $L^\infty([-1,0]) \times \mathbb{R}$ such that, for any $(\mathring{\rho},\lambda) \in \mathring{\mathcal{Z}} \times \Lambda$ with $\mathring{\rho}(-1) \leq \mathring{\rho}_{\max}$, $\mathcal{W}(\mathring{\rho}, \lambda)$ is a wave of elevation.  Let  $\mathring{\rho}$ be any such density, and assume that it has the additional regularity 
$$ \mathring{\rho} \in W^{1,\infty}([-1, \zeta_1]) \cap \cdots \cap W^{1,\infty}([\zeta_{N-1}, 0]).$$
Denote $w := \mathcal{W}(\mathring{\rho}, \lambda)$.

 We may let $\{\mathring{\rho}_n\}$ be a sequence of $C^{2,\alpha}([-1,0])$ rescaled streamline density functions with 
 $$ \mathring{\rho}_n \to \mathring{\rho} \textrm{ in } L^\infty([-1,0]) \cap W^{1,\infty}([-1, \zeta_1]) \cap \cdots \cap W^{1,\infty}([\zeta_{N-1}, 0]).$$
 Without loss of generality, assume that each $\mathring{\rho}_n \in \mathring{\mathcal{Z}}$ and $\mathring{\rho}_n(-d) \leq \mathring{\rho}_{\max}$.   By elliptic regularity, 
 $$ w_n := \mathcal{W}(\mathring{\rho}_n, \lambda) \in C_{\textrm{per}}^{3,\alpha}(\overline{\mathcal{S}}), \qquad n \geq 1.$$
 Furthermore, according to Lemma \ref{wave of elevation lemma}, $w_n$ is a wave of elevation.   In other words, each $(w_n, \mathring{\rho}_n)$ satisfies the hypotheses of the lemmas in section \ref{a priori section}.  In light of Lemma \ref{holder control w_x lemma}, Lemma \ref{control of w_eta in terms of R lemma}, and the continuity of $\mathcal{W}$, we can find a smaller neighborhood $\mathring{\mathcal{U}} \subset \mathring{\mathcal{Z}}$ so that 
 \be \sup_{n} \| \nabla w_n \|_{L^\infty(\mathcal{S})} \leq S_{\textrm{max}} < 1. \label{uniform W1infty bound on wn} \ee
 
 Now, observe that by Lemma \ref{control of nabla w_x lemma} and Lemma \ref{control of nabla w_xx lemma}, $\{ \partial_\xi w_n\}$ and $\{ \partial_\xi^2 w_n \}$ are uniformly bounded sequences in $W_{\textrm{per}}^{1, 2}(\mathcal{S})$.  Also, since $w_n|_{\{ \zeta = - d \}} = 0$, we have that
 $$ \partial_\xi w_n, \,  \partial_\xi^2 w_n = 0 \qquad \textrm{on } \{ \zeta = -1 \}.$$
 As we have seen, $v_n := \partial_\xi w_n$ is a generalized solution of the divergence form linear elliptic problem \eqref{v problem} with coefficients that are uniformly bounded in $L^\infty(\mathcal{S})$.  Moreover, $\partial_\xi w_n$ is bounded uniformly in $L^\infty(\mathcal{S})$ by Lemma \ref{holder control w_x lemma}.  We can therefore apply \cite[Theorem 13.1]{ladyzhenskaya1968linear} to conclude that, in fact, $\{\partial_\xi w_n\}$ is uniformly bounded in $W_{\textrm{per}}^{1,\infty}(\mathcal{S})$.  
 
 A similar argument can be made for $\{ \partial_\xi^2 w_n\}$.  Notice that $\partial_\xi^2 w_n$ is also a $W_{\textrm{per}}^{1,2}(\mathcal{S})$ solution of a  divergence form linear elliptic problem with coefficients bounded in $L^\infty(\mathcal{S})$.  Likewise, $\{\partial_\xi^2 w_n\}$ is uniformly bounded in $L^\infty(\mathcal{S})$ by  Lemma \ref{control of nabla w_xx lemma}.  Again citing \cite[Theorem 13.1]{ladyzhenskaya1968linear}, we infer that $\{ \partial_\xi^2 w_n \}$ is uniformly bounded in $W_{\textrm{per}}^{1,\infty}(\mathcal{S})$.  
 
The most sensitive estimate is for $\partial_\zeta^2 w_n$; in the limit, it will not be smooth over the interfaces.  Anticipating this, we  restrict our attention to a single layer $\mathcal{S}_i$.   Let $\tilde\alpha \in (\alpha, 1)$ be given, and put $\tilde r := 2/(1-\tilde\alpha)$.    We can express $\partial_\zeta^2 w_n$ in terms of the other derivatives using the equation:
\be \begin{split} a_{22}(\nabla w_n) \partial_\zeta^2 w_n &= -a_{11}(\nabla w_n) \partial_\xi^2 w_n - (a_{12}(\nabla w_n) + a_{21}(\nabla w_n)) \partial_{\xi}\partial_{\zeta} w_n \\
& \qquad  - {\mathring{\rho}^\prime_n \over \mathring{\rho}_n} a_2(\nabla w_n) + \lambda { \mathring{\rho}_n^\prime \over \mathring{\rho}_n} w_n.\end{split} \label{solve for w_zetazeta} \ee
In light of Lemma \ref{ellipticity lemma} and \eqref{uniform W1infty bound on wn},
\be\label{bound a_ij}
 \frac{1}{a_{22}(\nabla w_n)}, \, a_{12}(\nabla w_n), \, a_{21}(\nabla w_n), \, a_2(\nabla w_n) \in L^\infty(\mathcal{S}_i),\ee
and hence
\begin{align*} \int_{\mathcal{S}_i} | \partial_\zeta^2 w_n|^{\tilde r} \, d\xi \, d\zeta &=  \int_{\mathcal{S}_i} \frac{1}{|a_{22}(\nabla w_n)|^{\tilde r}} \bigg| -a_{11}(\nabla w_n) \partial_\xi^2 w_n - (a_{12}(\nabla w_n) + a_{21}(\nabla w_n)) \partial_{\xi}\partial_{\zeta} w_n \\
& \qquad - {\mathring{\rho}^\prime_n \over \mathring{\rho}_n} a_2(\nabla w_n) + \lambda { \mathring{\rho}_n^\prime \over \mathring{\rho}_n} w_n \bigg|^{\tilde r} \, d\xi \, d\zeta \\
& \leq C \left( \| \partial_\xi w_n \|_{W_{\textrm{per}}^{1,\tilde r}(\mathcal{S}_i)} + \| \mathring{\rho}_n^\prime\|_{L^\infty(\mathcal{S}_i)} \| w_n \|_{W_{\textrm{per}}^{1, \tilde r}(\mathcal{S}_i)} \right). \end{align*}
Thus, $\{ \partial_\zeta^2 w_n \}$ is bounded uniformly in $L_{\textrm{per}}^{\tilde r}(\mathcal{S}_i)$.   

Taking another $\xi$-derivative of \eqref{solve for w_zetazeta} we obtain an equation for $\partial_\xi\partial^2_\zeta w_n$:
\be \begin{split} a_{22}(\nabla w_n) \partial_\zeta^2\partial_\xi w_n = & -a_{221}(\nabla w_n)\partial^2_\xi w_n \partial^2_\zeta w_n - a_{222}(\nabla w_n)\partial_\xi\partial_\zeta w_n \partial^2_\zeta w_n \\
&  -a_{111}(\nabla w_n) (\partial_\xi^2 w_n)^2 - a_{112}(\nabla w_n) \partial_\xi\partial_\zeta w_n\partial_\xi^2 w_n - a_{11}(\nabla w_n) \partial_\xi^3 w_n\\
&  - \LB a_{12}(\nabla w_n) + a_{21}(\nabla w_n)\RB \partial^2_{\xi}\partial_{\zeta} w_n \\
&  - \LB a_{112}(\nabla w_n)\partial^2_\xi w_n + a_{122}(\nabla w_n)\partial_\xi\partial_\zeta w_n \right. \\
& \qquad \left. + a_{211}(\nabla w_n)\partial^2_\xi w_n + a_{212}(\nabla w_n)\partial_\xi\partial_\zeta w_n \RB \partial^2_{\xi}\partial_{\zeta} w_n \\
&  - {\mathring{\rho}^\prime_n \over \mathring{\rho}_n} \LB a_{21}(\nabla w_n)\partial^2_\xi w_n + a_{22}(\nabla w_n)\partial_\xi\partial_\zeta w_n \RB + \lambda { \mathring{\rho}_n^\prime \over \mathring{\rho}_n} \partial_\xi w_n.\end{split} \label{solve for w_zetazetaxi} \ee
From Lemma \ref{ellipticity lemma},  \eqref{uniform W1infty bound on wn}, and \eqref{bound a_ij},  we see that
\be\label{bound a_ijk}
a_{ijk}(\nabla w_n)\in L^\infty(\mathcal{S}).
\ee
Moreover, our analysis up to now confirms that
$$
\{\partial_{\zeta} w_n\}, \, \{\partial^2_\xi w_n\}, \, \{\partial_\xi\partial_\zeta w_n\}, \, \{\partial^2_\zeta w_n\}, \, \{\partial_\xi^3 w_n\}, \,\{\partial^2_{\xi}\partial_{\zeta} w_n\} \textrm{ uniformly bounded in } L^\infty(\mathcal{S}_i).
$$
This, along with \eqref{bound a_ijk}, allows us to conclude from \eqref{solve for w_zetazetaxi} that $\{ \partial_\xi \partial_\zeta^2 w_n \}$ is likewise bounded uniformly in $L^\infty(\mathcal{S}_i)$, for each strip $\mathcal{S}_i$.  

Now, from Morrey's inequality, we have the following chain of inclusions
 $$ W_{\textrm{per}}^{k,\infty}(\mathcal{S}_i) \subset W_{\textrm{per}}^{k,\tilde r}(\mathcal{S}_i) \subset C_{\textrm{per}}^{k-1,\tilde \alpha}(\overline{\mathcal{S}}_i) \subset\subset C_{\textrm{per}}^{k-1,\alpha}(\overline{\mathcal{S}_i}), \qquad k \geq 1.$$ 
Together with the arguments in the previous several paragraphs, this implies that  $\{w_n \}$ and $\{ \partial_\xi w_n\}$ are uniformly bounded in $C_{\textrm{per}}^{1,\tilde \alpha}(\overline{\mathcal{S}_1}) \cap \cdots \cap C_{\textrm{per}}^{1,\tilde \alpha}(\overline{\mathcal{S}_{N}})$.
Immediately, then, we have 
\be \label{reg of w}  w, \, w_\xi \in C_{\textrm{per}}^{1,\alpha}(\overline{\mathcal{S}_1}) \cap \cdots \cap C_{\textrm{per}}^{1,\alpha}(\overline{\mathcal{S}_{N}}). \ee 
Furthermore, \eqref{uniform W1infty bound on wn} allows us to conclude that $\| \nabla w \|_{L^\infty(\mathcal{S})} \leq S_{\textrm{max}}$.  Thus $w$ is a solution of the physical problem, Problem \ref{ter-krikorov prob}.

 Only one minor task remains:  confirming that $w \in W_{\textrm{per}}^{2,r}(\mathcal{S}_i)$, for $i = 1, \ldots, N$.  Because $w_\xi \in C_{\textrm{per}}^{1,\alpha}(\overline{\mathcal{S}})$, the only potential problem lies in $w_{\zeta\zeta}$.  However, as $\nabla w \in C_{\textrm{per}}^{0,\alpha}(\overline{\mathcal{S}_i})$,  \eqref{solve for w_zetazeta} shows that the distributional derivative $w_{\zeta\zeta} \in \mathcal{D}_{\textrm{per}}^\prime(\mathcal{S}_i)$ can be identified with an $L^\infty(\mathcal{S}_i)$ function.  Thus $w \in W_{\textrm{per}}^{2,r}(\mathcal{S}_i)$, and the proof is complete. \end{proof} 
\begin{remark} \label{extra regularity remark}  From the proof above, we see that, in fact $w \in W^{2,\infty}(\mathcal{S}_i)$, for each layer $\mathcal{S}_i$.  
\end{remark}

\section{Proof of the main result} \label{proof of main theorems section}
In this section, we state and prove our main theorem --- the rigorous version of the statements in \S\ref{intro statement of results section}.  We have already accomplished the lion's share of the work in the previous section; Theorem \ref{main continuity theorem} essentially proves statement (A).  What remains is to transition back to the original formulation, and also to prove statements (B) and (C).  

To state things concisely, we define the set of stable streamline density functions 
\begin{align*} \mathscr{D} &:= \Big\{ \rho \in L^\infty([p_0,0]) : \rho > 0, \,  \rho(0) = 1, \, \textrm{$\rho$ is non-increasing, and }  \\
& \qquad\qquad \rho \in W^{1,\infty}([p_0, p_1]) \cap \cdots \cap W^{1,\infty}([p_{N-1}, 0]),  \textrm{ for some $p_1, \ldots, p_{N-1}$} \Big\}.\end{align*}
It is easy to see that this is a convex subset of $L^\infty([p_0,0])$.  

At last, the result is the following.

\begin{thm}[Main theorem] \label{main theorem} Fix a H\"older exponent $\alpha \in (0,1)$, and put $r := 2/(1-\alpha)$.  Choose a pseudo volumetric mass flux $p_0 < 0$ and ocean depth $d > 0$.  Let $\rho_* \in C^{1,\alpha}([-p_0,0])$ be a stably stratified streamline density function.  Choose a wave speed $c_* > c_{\mathrm{crit}}(\rho_*)$.  There exists a minimal period $L_{\mathrm{min}}$ and amplitude bound $S_{\mathrm{max}}$ such that, for any $(u_*,v_*, \varrho_*, P_*, \eta_*)$ solving Problem \ref{weak Euler prob} with streamline density $\rho_*$ that is (i) periodic localized near the crest, with period $L > L_{\mathrm{min}}$; (ii) a wave of strict elevation; and (iii) sufficiently small-amplitude,
\be  \left|\frac{v_*}{c_*-u_*}\right|, \left| \frac{u_*}{c-u_*}\right| < S_{\mathrm{max}} \qquad \textrm{in } \Omega_*, \label{eulerian small amplitude} \ee
there exists a neighborhood $\mathcal{U}$ of $\rho_*$ in $L^\infty([p_0,0])$ such that the following statements hold. 
\begin{itemize}
\item[(a)] There exists a Lipschitz continuous mapping
$$ \mathcal{H} : \mathscr{D} \cap \mathcal{U} \to W_{\mathrm{per}}^{1,r}(\mathcal{R}) \subset C_{\mathrm{per}}^{0,\alpha}(\overline{\mathcal{R}})$$ 
such that $\mathcal{H}(\mathring{\rho}_*) = h_*$, the height function corresponding to the fixed flow $(u_*, v_*, \varrho_*, P_*, \eta_*)$, and, for each $\rho \in \mathscr{D} \cap \mathcal{U}$, $\mathcal{H}(\rho)$ solves the height equation Problem \ref{localized height equation prob} with streamline density $\rho$.   Also, $\mathcal{H}(\rho)$ is even in $q$ and a wave of elevation.  Lastly, the corresponding wave speed $c$ obeys the identity 
\be \label{wave speed estimate} c - c_* = \frac{1}{d} \int_{p_0}^0 \frac{ \rho_* - \rho }{\rho \sqrt{\rho_*} + \rho_*\sqrt{\rho} } \, ds = \mathcal{O}(\| \rho - \rho_*\|_{L^\infty}). \ee
\item[(b)] Fix $\rho \in \mathscr{D} \cap  \mathcal{U}$, and let $I \subset\subset [p_0, 0] \setminus \{ p_1, \ldots, p_{N-1}\}$ be a connected with $\rho \in C^{1,\alpha}(\overline{I})$.  Then 
$$ \| \mathcal{H}(\rho) - h_* \|_{C_{\mathrm{per}}^{1,\alpha}( \mathbb{R} \times \overline{I})} \leq C_1\left( \| \rho - \rho_* \|_{L^\infty([p_0,0])} +  \| \rho - \rho_* \|_{C^{1,\alpha}(\overline{I})} \right), $$
where $C_1 > 0$ depends on  $|I|$, $\rho_*$, and $h_*$.   
\item[(c)] Let $I$ and $\rho \in \mathscr{D} \cap \mathcal{U}$ be given as in \emph{(b)}, and let $P$ denote the pressure for the wave with height function $\mathcal{H}(\rho)$.  Then, 
$$ \| P - P_* \|_{C^{0,\alpha}_{\textrm{per}}(\mathbb{R} \times \overline{I})} \leq C_2\left(  \| \rho - \rho_* \|_{L^\infty([p_0, 0])} + \| \rho - \rho_* \|_{C^{1,\alpha}(I)} \right),$$
where $C_2 > 0$ depends on $|I|$, $\rho_*$, and $h_*$.   
\end{itemize}
\end{thm}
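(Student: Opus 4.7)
My plan is to obtain $\mathcal{H}$ in part (a) as essentially a direct consequence of Theorem~\ref{main continuity theorem}, then derive (b) and (c) via localized elliptic regularity and Bernoulli's law, respectively. For (a), given $\rho_*\in C^{1,\alpha}([p_0,0])$, formula~\eqref{c determined by rho} determines $c_*$ and hence the rescaled coordinate $\zeta=\zeta(p)$ and density $\mathring\rho_*$; the Eulerian smallness hypothesis~\eqref{eulerian small amplitude} unravels, via~\eqref{defpsi} and~\eqref{def w}, to exactly $\|\nabla w_*\|_{L^\infty}<S_{\max}$, while strict elevation becomes $w_*>0$ off the bed. For an $L^\infty$ neighborhood $\mathcal U$ of $\rho_*$, each $\rho\in\mathscr D\cap\mathcal U$ produces a rescaled $\mathring\rho$ lying in the neighborhood supplied by Theorem~\ref{main continuity theorem}, and I set $\mathcal H(\rho)(q,p):=d(\mathcal W(\mathring\rho,\lambda_*)(q/d,\zeta(p))+\zeta(p)+1)$, using $\mathring h(p)=d(\zeta(p)+1)$ so $\mathcal H(\rho)-\mathring h=d\,\mathcal W$. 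Evenness and the wave-of-elevation property transfer from Lemmas~\ref{symmetry lemma} and~\ref{wave of elevation lemma}, and Lipschitz dependence of $\mathcal H(\rho)$ on $\rho$ in $W^{1,r}_{\mathrm{per}}(\mathcal R)$ combines the $C^{0,\alpha}$ Lipschitz estimate on $\mathcal W$ from Theorem~\ref{main continuity theorem} with the manifest Lipschitz dependence of the change of variable $\zeta(p)$ on $\rho$. Identity~\eqref{wave speed estimate} is a direct algebraic consequence of subtracting~\eqref{c determined by rho} evaluated at $\rho$ and $\rho_*$ and rationalizing $1/\sqrt\rho-1/\sqrt{\rho_*}$.

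For (b), the point is that on $\mathbb R\times I$ both $\rho$ and $\rho_*$ are $C^{1,\alpha}$, so the quasilinear height equation is a genuinely elliptic PDE with H\"older continuous coefficients and no interfacial jumps in that strip. I form the linear equation satisfied by the difference $u:=\mathcal H(\rho)-h_*$; its principal part involves an averaged coefficient of the form $\int_0^1(D^2 F)(t\nabla\mathcal H(\rho)+(1-t)\nabla h_*)\,dt$, where $F$ is the quasilinear integrand in~\eqref{weakheighteq}, and its source collects terms of the form $(\rho-\rho_*)\times(\text{smooth data})$ and $(\rho-\rho_*)_p\times(\text{smooth data})$. The \emph{a priori} estimates of \S\ref{a priori section} are uniform in the number of layers, so the coefficients lie in $C^{0,\alpha}(\mathbb R\times I)$ with norm controlled in terms of $\rho_*$ and $h_*$ alone. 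Interior Schauder estimates, together with a co-normal boundary estimate near $\{p=0\}$ if $0\in\overline I$, yield the bound in~(b), with a constant that may deteriorate as $|I|\to 0$ or as $I$ approaches the nearest jump of $\rho$.

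For (c), I invoke Bernoulli's theorem in the form $P=E-(1+h_q^2)/(2h_p^2)-g\rho(h-d)$, which is valid in each smooth layer. For waves localized near the crest, $E$ along the streamline $\{\psi=-p\}$ is an explicit smooth function of $\rho(p)$, $c$, and $\mathring h(p)$ by the computation preceding Definition~\ref{definition localized periodic}. On $\mathbb R\times I$, where no interfacial jump interferes, $P$ is therefore a $C^{0,\alpha}$-smooth function of $(\rho,c,h,h_q,h_p)$, and combining part~(b) with the Lipschitz estimate~\eqref{wave speed estimate} for $c-c_*$ produces the claimed $C^{0,\alpha}$ bound. This in fact yields convergence on the whole two-dimensional region $\mathbb R\times I$, matching the sharper claim advertised in the sentence following statement~(C).

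The main obstacle lies in (b): the difference $\mathcal H(\rho)-h_*$ does \emph{not} satisfy a clean elliptic PDE on the full rectangle $\mathcal R$, since the interface structures of $\rho$ and $\rho_*$ may differ completely. Everything hinges on the fact that the \emph{a priori} bounds of \S\ref{a priori section} are independent of the number of layers, allowing the linearized coefficients to be controlled in $C^{0,\alpha}$ on strips interior to a common smooth region of the two densities. Once this uniform control is in place, both (b) and (c) reduce to standard elliptic regularity and algebraic manipulations, respectively.
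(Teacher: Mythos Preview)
Your proposal is essentially the same approach as the paper's, and the overall structure for (a), (b), (c) is correct. Two points are worth flagging. First, in part (a) you feed $\lambda_*$ into $\mathcal{W}$, but for $\mathcal{H}(\rho)$ to solve Problem~\ref{localized height equation prob} you must use the Richardson number $\lambda(\rho)=gd/c(\rho)^2$ with $c(\rho)$ determined by~\eqref{c determined by rho}; the paper handles this by introducing Lipschitz auxiliary maps $\mathpzc{c},\mathpzc{l},\mathpzc{z}$ taking $\rho$ to the wave speed, Richardson number, and rescaled streamline coordinate, and then setting $\mathcal{H}(\rho)(q,p)=[\mathcal{W}(\mathring\rho,\mathpzc{l}(\rho))](q/d,\mathpzc{z}(\rho)(p))$. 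Without this correction your $\mathcal{H}(\rho)$ would solve a Ter-Krikorov problem with the wrong $\lambda$ and hence not correspond to the localized height equation for $\rho$.

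Second, for (b) the paper carries out the difference-equation argument in the Ter-Krikorov $(\xi,\zeta)$ variables rather than in $(q,p)$, writing out an explicit symmetric matrix $\mathbf{M}$ in place of your integral-averaged Hessian; this is only a cosmetic difference. The $C^{0,\alpha}$ control of the coefficients on the strip comes not from the \emph{a priori} estimates of \S\ref{a priori section} per se, but from the $W^{2,r}_{\mathrm{per}}(\mathcal{S}_i)$ regularity of $w$ in each layer established in Theorem~\ref{main continuity theorem} (via Morrey embedding $\nabla w\in C^{0,\alpha}$ inside $\mathcal{S}_i$). For (c) your plan matches the paper's: express $P$ via Bernoulli's law, estimate $B-B_*$ directly in terms of $\rho-\rho_*$ and $c-c_*$, and then invoke (b).
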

\begin{proof}  Let $(u_*, v_*, \varrho_*, P_*, \eta_*)$, $c_*$, and $\rho_*$  satisfying the above hypotheses be given; let $(w_*, \mathring{\rho}_*, \lambda_*)$ be the corresponding objects in the Ter-Krikorov formulation.  Note that hypothesis {(iii)}  says that $\| \nabla w_* \|_{L^\infty} < S_{\textrm{max}}$, taking into account the change of variables identities.  We may then apply Theorem \ref{main continuity theorem}, and also let a neighborhood $\mathring{\mathcal{U}} \times \Lambda$ of $(\mathring{\rho}_*,\lambda_*)$ in $L^\infty([-1,0]) \times \mathbb{R}$, and a mapping $\mathcal{W} \in C^1(\mathring{\mathcal{U}} \times \Lambda; X_1)$ be given as in Theorem \ref{Calpha continuity theorem}.  

This gives solutions to the Ter-Krikorov problem.  To translate them back to the height equation formulation, we consider the mappings $\mathpzc{c}: \mathscr{D} \to \mathbb{R}$, $\mathpzc{l} : \mathscr{D} \to \mathbb{R}$, and  $\mathpzc{z} : \mathscr{D} \to  W^{1,\infty}([p_0,0])$ defined by
$$ \mathpzc{c}(\rho) := \frac{1}{d} \int_{p_0}^0 \frac{ds}{\sqrt{\rho(s)}}, \qquad \mathpzc{l}(\rho) := \frac{gd}{\mathpzc{c}(\rho)^2},$$
and
$$ \mathpzc{z}(\rho)(p) := \frac{1}{ \mathpzc{c}(\rho)d  } \int_{p_0}^p \frac{ds}{ \sqrt{\rho(s)}} -1.$$
Notice that $\mathpzc{z}(\rho)(\cdot)$ is monotonic and surjective, and is thus a homeomorphism from $[p_0, 0]$ to $[-1,0]$.  It is likewise easy to confirm that $\mathpzc{z}$, $\mathpzc{l}$,  and $\mathpzc{c}$ are Lipschitz continuous in $\rho$.     

With these facts in mind, we observe that for any $\rho \in \mathscr{D}$, the corresponding rescaled streamline density function $\mathring{\rho}$ satisfies the identity $\rho = \mathring{\rho}\circ \mathpzc{z}(\rho)$.  Similarly, the Richardson number is found by setting $\lambda = \mathpzc{l}(\rho)$.   We may thus recover the neighborhood $\mathcal{U}$ of $\rho_*$ by taking it to be the pre-image of $\mathring{\mathcal{U}}$ under $\mathpzc{z}$.  Likewise, $\mathcal{H}$ is defined by pulling back $\mathcal{W}$: 
$$ [\mathcal{H}(\rho)](q,p) := \left[\mathcal{W}\left(\mathring{\rho}, \lambda \right) \right] \left(\frac{q}{d}, \mathpzc{z}(p) \right), \qquad \textrm{for all } (q,p) \in \overline{\mathcal{R}}.$$
The regularity statements follow from Theorem \ref{main continuity theorem} and the equivalence of the formulations.  Notice that, because we are restricting the domain to $\mathscr{D}$ which is merely convex, $\mathcal{H}$ is merely Lipschitz, not differentiable.  The fact that $\mathcal{H}(\rho)$ is even and a wave of elevation {is} a consequence of Lemmas \ref{symmetry lemma} and \ref{wave of elevation lemma}.  Lastly, \eqref{wave speed estimate} simply comes from evaluating $\mathpzc{c}(\rho) - \mathpzc{c}(\rho_*)$.  The proof of (a) is complete.

For (b), it is easier to work first in the Ter-Krikorov formulation.  Let $I$ be given as above and put $J := \mathpzc{z}(\rho)(I)$, and ${\mathpzc{T}} := [-L/d, L/d] \times J$.  By definition, ${\mathpzc{T}} \subset \overline{\mathcal{S}_i}$, for some layer $\mathcal{S}_i$.  

Denote $w := \mathcal{W}(\rho, \lambda)$, and  $u := w_*-w$. Then $u$ satisfies the following PDE 
\be  \nabla \cdot (\mathbf{M} \nabla u) - \lambda \mathring{\rho}^\prime u =  \nabla \cdot \left( [\mathring{\rho}-\mathring{\rho}_*] \mathbf{F} (\nabla w_*)\right)  + [\lambda_* - \lambda] \mathring{\rho}_* w_*- \lambda [\mathring{\rho}^\prime - \mathring{\rho}_*^\prime] w_* \qquad \textrm{in } \mathpzc{T}, \label{difference equation} \ee
where $\mathbf{F}$ is defined as in \eqref{ter-krikorov eq}, and $\mathbf{M} = (M_{ij})$ is the self-adjoint matrix:
\begin{align*}
M_{11} &= \frac{\mathring{\rho}}{1+\partial_\zeta w_*} \\
M_{12} & = M_{21} = -\frac{1}{2} \frac{\mathring{\rho}}{(1+\partial_\zeta w_*)^2 (1+\partial_\zeta w)^2} \Big[ (1+ \partial_\zeta(w+w_*) + (\partial_\zeta w)(\partial_\zeta w_*)) \partial_\xi w \\
& \qquad \qquad +  \frac{1}{2}(1 + \partial_\zeta w)^2 \partial_\xi (w+w_*) \Big]  \\
M_{22} & = \frac{\mathring{\rho}}{(1+\partial_\zeta w)^2 (1+\partial_\zeta w_*)^2} \left[ 1+(\partial_\xi w)^2 + \frac{1}{2}(1 - 2\partial_\zeta w + (\partial_\xi w)^2) \partial_\zeta (w + w_*) \right].
\end{align*} 
An elementary but tedious calculation confirms that $\mathbf{M}$ is positive definite.  By the regularity of $w$ and $w_*$, the entries of $\mathbf{M}$ are of class $C^{0,\alpha}$ in $\overline{\mathpzc{T}}$.   Next consider the terms occurring on the right-hand side of \eqref{difference equation}.  By the equation satisfied by $w_*$, we have that
$$ \nabla \cdot \mathbf{F}(\nabla w_*) = \frac{1}{\mathring{\rho}}\left[ -\mathring{\rho}_*^\prime F_2(\nabla w_*) + \lambda_* \mathring{\rho}_*^\prime w \right] \in C_{\textrm{per}}^{0,\alpha}(\overline{\mathpzc{T}}). $$
The other terms are likewise of class $C^{0,\alpha}$, taking into account the regularity of $\rho$, $\rho_*$, $w$, and $w_*$ in $\mathcal{S}_i$.  A similar computation shows that on $\{ \zeta = 0 \}$, a co-normal boundary condition is satisfied.  Moreover, since $u = 0$ on the lower boundary $\{ \zeta = -1\}$, we may pose a homogeneous Dirichlet condition there.  Applying a standard Schauder-type estimate (cf., e.g., \cite[Theorem 3]{constantin2011discontinuous}) yields 
\begin{align*} \| w - w_* \|_{C^{1,\alpha}(\overline{\mathpzc{T}})} &\leq C \Big( \| w - w_* \|_{C^0(\overline{\mathpzc{T}})} + \| (\mathring{\rho} - \mathring{\rho}_*) \nabla \cdot \mathbf{F}(\nabla w_*)\|_{C^{0,\alpha}_{\textrm{per}}(\overline{\mathpzc{T}})} \\
& \qquad + |\lambda- \lambda_*| \| \rho_* w_* \|_{C^{1,\alpha}(\overline{\mathpzc{T}})} + \| (\mathring{\rho}^\prime - \mathring{\rho}_*^\prime)(\lambda w_* - F_2(\nabla w_*) \|_{C^{0,\alpha}_{\textrm{per}}(\overline{\mathpzc{T}})}  \Big) \\
& \leq C\left( \| \mathring{\rho} - \mathring{\rho}_* \|_{L^\infty([p_0,0])} +  \| \mathring{\rho} - \mathring{\rho}_* \|_{C^{1,\alpha}(J)} \right).  \end{align*} 
Part (b) follows now by simply re-expressing this in terms of  $h_*$ and $\mathcal{H}(\rho, \lambda)$.

Finally, to prove the pressure convergence in part (c), we recall that from Bernoulli's theorem and the change of variable identities, 
\be  P(q,p) =  \sum_i \frac{Q_i}{2} \mathds{1}_{\mathcal{R}_i} + P_{\textrm{atm}} -  \frac{1+h_q^2}{2h_p^2} - g \rho h + B(p). \label{P in semi-Lagrangian} \ee
For $p \in [p_j, 0)$, 
\begin{align*} B(p) - B_*(p) &=  (\frac{1}{2} c^2-gd)(\rho(p) - \rho_*(p)) + \frac{1}{2}(c^2 - c_*^2) \rho_*(p)+ g\int_0^p (\rho^\prime - \rho_*^\prime) \mathring{h} + \rho^\prime_* ( \mathring{h} - \mathring{h}_*) \, ds \\
& = (\frac{1}{2} c^2-gd)(\rho(p) - \rho_*(p)) + \frac{1}{2}(c^2 - c_*^2) \rho_*(p) \\
& \qquad  +g \sum_{i \geq j} \left[  ( \jump{\rho}_i - \jump{\rho_*}_i) \mathring{h}(p_i)  + \jump{\rho_*}_i (\mathring{h}(p_i) - \mathring{h}_*(p_i)) \right] \\
& \qquad - g\int_0^p \left[ (\rho - \rho_*) \frac{1}{c \sqrt{\rho}} + \rho_* \left( \frac{1}{c \sqrt{\rho}} - \frac{1}{c_* \sqrt{\rho_*}} \right)  \right] \, ds. 
\end{align*}
Hence 
$$\| B - B_*\|_{C^{0,\alpha}(\overline{I})} \leq C\left( \| \rho - \rho_* \|_{L^\infty([p_0,0])}  + \| \rho - \rho_* \|_{C^{0,\alpha}(\overline{I})} \right).$$
Now, applying part (b) to \eqref{P in semi-Lagrangian}, we get the desired estimate for $P - P_*$.   \end{proof}

\appendix
\section{Proof of formulation equivalence} \label{formulation equivalence appendix}
We provide, in this appendix, the proof that the various formulations of the steady wave problem are indeed equivalent.  The arguments here  closely follow those of Constantin and Strauss in \cite{constantin2011discontinuous}.  The main task it to generalize their work to allow for multiple layers, as well as heterogeneous density.

We mention that the regularity statements in these results are sub-optimal.  We are ultimately interested in dealing with  H\"older continuous functions --- we work in Sobolev spaces in order to draw on certain key results in the literature of elliptic equations.  But this compels us to assume more regularity than should be necessary.  For example, to ensure that $\psi$ is of class $C^{0,\alpha}(\Omega)$, we are assuming the stronger statement that it is in $W^{1,r}$, and then appealing to Sobolev embedding.  

In the constant density regime, this problem was resolved by Varvaruca and Zarnescu \cite{varvaruca2012equivalence}.  They show that, for $\alpha \in (1/3,1]$, an equivalence between the Eulerian formulation, stream function formulation, and height equation holds working exclusively in H\"older continuous spaces with exponent $\alpha$.  This is a rather deep result: the appearance of $1/3$ is connected to the famous Onsager conjecture.   It is our suspicion that a generalization of Varvaruca and Zarnescu's argument would apply to the stratified regime, but that is beyond the scope of our present ambitions. 

We begin with a technical lemma which essentially states that, starting from the weak Euler formulation, the pseudo relative stream function and Bernoulli function are each well-defined and have the stated regularity. 

\begin{lemma}[Chain rule and composition in Sobolev spaces]  \label{chainrulelemma} Let $\alpha \in (0,1)$ be given and put $r := 2/(1-\alpha)$.  \begin{itemize}
\item[{(i)}] Suppose that there exists a solution $(u,v,P, \varrho, \eta, \eta_1, \ldots, \eta_{N-1})$ to the weak Euler problem as detailed in the statement of Problem \ref{weak Euler prob}.  Then there exists {a}
\be  \psi \in  W_{\mathrm{per}}^{1,r}(\Omega) \cap W_{\mathrm{per}}^{2,r}(\Omega_1) \cap \cdots \cap W_{\mathrm{per}}^{2,r}(\Omega_N) \label{chain rule reg psi} \ee
 satisfying \eqref{defpsi}, \eqref{nostagnationpsi}, \eqref{psisurfacecond}, and \eqref{psibedcond}.  Moreover, if $F: [p_0,0] \to \mathbb{R}$ has the regularity 
 \be  F \in L^{r}([p_0,0]) \cap W^{1,r}([p_0,p_1]) \cap \cdots \cap W^{1,r}([p_{N-1}, p_0]), \label{chain rule reg F} \ee
  then $F \circ (-\psi) \in L_{\mathrm{per}}^r(\Omega)$, $F \circ (-\psi) \in W_{\textrm{per}}^{1,r}(\Omega_i)$, and the chain rule holds in the interior of each $\Omega_i$:
\be \partial_x F(-\psi) = -F^\prime(-\psi) \psi_x, \qquad \partial_y F(-\psi) = - F^\prime(-\psi) \psi_y. \label{sobolevchainrule} \ee

\item[{(ii)}]  Suppose that $\psi$ has the regularity given in \eqref{chain rule reg psi}, and solves  the Stream-function problem \eqref{streamfunctionprob}, for some $(Q, \beta, \rho, \eta, \eta_1, \ldots, \eta_{N-1})$ with the regularity specified in Problem \ref{localized stream function prob}.  Then for every $F$ as in \eqref{chain rule reg F}, the same conclusion holds as in part {\em(i)}:  $F \circ (-\psi) \in {L^r_{\mathrm{per}}(\Omega)}$, $F \circ (-\psi) \in W_{\textrm{per}}^{1,r}(\Omega_i)$, and the chain rule \eqref{sobolevchainrule} applies in the interior of each $\Omega_i$.\end{itemize}   \end{lemma}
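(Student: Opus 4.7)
The plan is to handle the two parts in a unified way, constructing the stream function in part (i) first and then giving a single composition argument that covers both parts (i) and (ii), since once $\psi$ has the regularity \eqref{chain rule reg psi} the two chain rule statements are identical.

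For the construction of $\psi$ in part (i), I would work layer by layer. Since $r>2$, Sobolev embedding gives $u, v, \varrho \in W^{1,r}(\Omega_i) \hookrightarrow C^{0,\alpha}(\overline{\Omega_i})$, so together with \eqref{varrhopositive} the function $\sqrt{\varrho}$ and all products may be differentiated pointwise. Subtracting the divergence free condition \eqref{weakvolume} times $\varrho$ from the mass identity \eqref{weakmass} yields $(u-c)\varrho_x + v \varrho_y = 0$ a.e. in $\Omega_i$, and a direct calculation then shows that the transformed velocity $\sqrt{\varrho}(u-c, v)$ is divergence free there. Because $\Omega_i$ is topologically a disk (a single horizontal period with graph top and bottom), the Poincar\'e lemma furnishes $\psi_i \in W^{2,r}(\Omega_i)$ with $\nabla^\perp \psi_i = \sqrt{\varrho}(u-c, v)$, and the no stagnation hypothesis \eqref{nostagnation} gives \eqref{nostagnationpsi}. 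Gluing uses the kinematic conditions \eqref{weakkinematicsurface}--\eqref{weakkinematicinterface}, which express precisely that $\sqrt{\varrho}(u-c, v)$ is tangent to each interface, so the $C^{0,\alpha}$ trace of $\psi_i$ on each component of $\partial \Omega_i$ is a constant. Choosing additive constants so that $\psi = 0$ on $\{y = \eta\}$ and $\psi = -p_0$ on the bed, and taking the interior traces to define $p_1, \ldots, p_{N-1}$, we obtain a continuous $\psi$ on $\overline{\Omega}$ whose distributional gradient coincides with $\sqrt{\varrho}(u-c, v) \in L^r(\Omega)$; this promotes $\psi$ to $W^{1,r}_{\textrm{per}}(\Omega)$ and completes \eqref{chain rule reg psi}. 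Periodicity in $x$ and the matching of the $p_i$ across horizontal periods follow because the vector field $\sqrt{\varrho}(u-c, v)$ is itself $2L$-periodic.

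For the composition statement, suppose $\psi$ has the regularity \eqref{chain rule reg psi} (this covers part (ii) directly, and part (i) via the step above). The condition \eqref{nostagnationpsi} together with $\psi \in W^{2,r}(\Omega_i) \hookrightarrow C^{1,\alpha}(\overline{\Omega_i})$ makes $T_i : (x,y) \mapsto (x, -\psi(x,y))$ a $C^{1,\alpha}$ bi-Lipschitz diffeomorphism of $\overline{\Omega_i}$ onto $\overline{\mathcal{R}_i}$. Writing $\tilde F(q,p) := F(p)$, one has $\tilde F \in W^{1,r}(\mathcal{R}_i)$ with $\nabla \tilde F = (0, F^\prime)$, and $F\circ(-\psi) = \tilde F \circ T_i$. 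Since composition with a $C^{1,\alpha}$ diffeomorphism preserves $W^{1,r}$, we get $F\circ(-\psi) \in W^{1,r}(\Omega_i)$, and the chain rule \eqref{sobolevchainrule} is obtained by approximating $F$ by smooth functions $F_n$ in $W^{1,r}([p_{i-1}, p_i])$ (possible by density), applying the classical chain rule to $F_n\circ(-\psi)$, and passing to the limit using uniform convergence of $F_n$ via the 1D embedding $W^{1,r} \hookrightarrow C^{0, 1-1/r}$ and $L^r$ convergence of $F_n^\prime$ composed with the Lipschitz map $-\psi$. Summing the layerwise $L^r$ bounds yields $F\circ(-\psi) \in L^r_{\textrm{per}}(\Omega)$.

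The main obstacle is the gluing of the $\psi_i$ in the construction step. The weak formulation of the kinematic condition \eqref{weakkinematicinterface} must be interpreted rigorously enough to conclude that $\nabla^\perp \psi_i$ is tangent to the interface $\{y = \eta_i\}$; this is made possible by the $C^{1,\alpha}$ regularity of $\eta_i$ and the $C^{0,\alpha}$ regularity of the velocity field up to the interface, which let us read \eqref{weakkinematicinterface} as a pointwise identity of traces. A secondary point is the well-definedness of $p_0$ via \eqref{defp0}, i.e., the fact that $\int_{-d}^{\eta(x)} \sqrt{\varrho}(u-c)\,dy$ is independent of $x$; this drops out of the same divergence free calculation used to construct $\psi_i$, and is what ensures consistency when patching the $\psi_i$ and periodically extending $\psi$ across horizontal periods.
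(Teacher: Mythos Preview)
Your proposal is correct, and the construction of $\psi$ in part (i) matches the paper's approach closely: both work layer by layer, use the embedding $W^{1,r}\hookrightarrow C^{0,\alpha}$ to get pointwise control of $u,v,\varrho$, build $\psi_i$ from the divergence-free field $\sqrt{\varrho}(u-c,v)$, and glue using the kinematic conditions to make the interface traces constant. The paper phrases this via the explicit integral $\psi(x,y)=-\int_y^{\eta(x)}\sqrt{\varrho}(u-c)\,dz$ rather than the Poincar\'e lemma, which sidesteps the periodicity issue you address at the end; your justification of periodicity is brief but correct once you know the traces are constant on the periodic interfaces.

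Where you genuinely diverge is in the chain rule argument. The paper passes to Dubreil--Jacotin coordinates $(q,p)$ and verifies \eqref{sobolevchainrule} by an explicit integration-by-parts computation against a test function $\varphi\in\mathcal{D}_{\mathrm{per}}(\Omega_i)$, using that $F(-\psi)$ becomes $F(p)$ under the change of variables and that $\partial_x,\partial_y$ transform via $h_q,h_p$. Your route is more abstract: you recognize $F\circ(-\psi)=\tilde F\circ T_i$ as a pullback under the $C^{1,\alpha}$ diffeomorphism $T_i$, invoke preservation of $W^{1,r}$ under such maps, and recover the formula by approximating $F$ in $W^{1,r}([p_{i-1},p_i])$ and passing to the limit. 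Both are valid; the paper's calculation is entirely self-contained and makes the role of the height function explicit, while your argument is cleaner and immediately explains why part (ii) requires no separate work. One minor point: when you pass to the limit in $F_n'(-\psi)\to F'(-\psi)$ in $L^r(\Omega_i)$, the relevant bound comes from the change-of-variables formula for the bi-Lipschitz map $T_i$ (bounded Jacobian), not literally from ``composition with the Lipschitz map $-\psi$,'' which is a map into $\mathbb{R}$ rather than a diffeomorphism; this is a phrasing issue, not a gap.
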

\begin{proof}  (i)  Suppose that we have a solution to the weak Euler problem. The choice of $r$ was made so that we may exploit the embedding $W_{\textrm{per}}^{1,r}(\mathbb{R}^2) \subset C_{\textrm{per}}^\alpha(\mathbb{R}^2)$.  Consider the restriction of the velocity field and density to a single layer: $u_i := u|_{\Omega_i}, v_i := v|_{\Omega_i}$, $\varrho|_i := \varrho|_{\Omega_i}$.  As $\eta, \eta_i \in {C^{1,\alpha}}$, the boundary of $\Omega_i$ is better than Lipschitz, allowing us to extend $u_i, v_i,$ and $\varrho_i$ to functions in $W_{\textrm{per}}^{1,r}(\mathbb{R}^2)$.  Indeed, we may do this in such a way that the extensions are compactly supported in the $y$-variable.  It follows by Morrey's inequality that the extensions are all of class $C^\alpha$, hence their restrictions to $\Omega_i$ are in $C_{\textrm{per}}^{\alpha}(\overline{\Omega_i})$.  In particular this implies that they are all $L^\infty(\Omega_i)$.

By H\"older's inequality, $\sqrt{\varrho}(u-c, v) \in W^{1,r}({\Omega_i})$, thus in each layer \eqref{defpsi} defines a function $\psi_i \in W_{\textrm{per}}^{2,r}(\Omega_i)$ up to a constant.  Since the interfaces $\eta, \eta_i$ are Lipschitz, the traces of the $\psi_i$ are well-defined.  Moreover, \eqref{weakkinematicsurface}, \eqref{weakkinematicbed}, and \eqref{weakkinematicinterface} ensure that the traces are constants.  It follows that we may take $\psi_N = 0$ on $\{ y = \eta(x) \}$, and that there is a unique choice of the remaining constants such that $\psi_i = \psi|_{\Omega_i}$, for a function $\psi \in C_{\textrm{per}}^0(\Omega)$; in fact, $\psi$ is globally {defined} by the formula:
$$ \psi(x,y) = -\int_y^{\eta(x)} \sqrt{\varrho(x,z)} [ u(x,z) - c ] \, dz, \qquad \textrm{in } \Omega.$$
Consequently,  the trace of $\psi$ on $\{ y = -d\}$ is $-p_0$, where $p_0$ is defined as in \eqref{defp0}.  It is simple to show that the formula above implies that $\psi \in C_{\textrm{per}}^{0,1}(\overline{\Omega})$, and so in particular $ \psi \in W_{\textrm{per}}^{1,r}(\Omega)$.  

For the second statement in part (i), it is most convenient to transition to the {height} equation formulation.  Recalling the change of variables in \cite{walsh2009stratified}, we have that $h$ defined by \eqref{defheight} satisfies 
$$ h_q = \frac{v}{u-c}, \qquad h_p = \frac{1}{\sqrt{\varrho}(c-u)}, \qquad v = -\frac{h_q}{h_p}, \qquad u = c- \frac{1}{\sqrt{\varrho} h_p},$$
and
$$ \partial_x = \partial_q - \frac{h_q}{h_p} \partial_p, \qquad \partial_y = \frac{1}{h_p} \partial_p.$$
From these statements it is obvious that $h \in W_{\mathrm{per}}^{1,r}(\mathcal{R}) \cap W_{\mathrm{per}}^{2,r}(\mathcal{R}_1) \cap \cdots \cap W_{\mathrm{per}}^{2,r}(\mathcal{R}_N)$.  

Let $F$ be given as in \eqref{chain rule reg F}.  First note that $F \circ (-\psi) \in L_{\textrm{per}}^r(\Omega)$, since $\psi \in C^{0,\alpha}(\overline{\Omega})$.  Letting $\varphi \in \mathcal{D}_{\textrm{per}}(\Omega_i)$ be a (periodic) test function, we calculate that
\begin{align*}
\int\!\!\!\int_{\Omega_i} \varphi \partial_x F(-\psi) \, dy\,dx & = \int_{-L}^L  \int_{\eta_{i-1}(x)}^{\eta_i(x)} \varphi \partial_x F(-\psi) \, dy\, dx  = -\int_{-L}^L  \int_{\eta_{i-1}(x)}^{\eta_i(x)} F(-\psi) \partial_x \varphi \, dy\, dx \\
& = -\int_{-L}^L \int_{p_{i-1}}^{p_i} h_p F(p) \partial_x \varphi \, dp \, dq  = -\int\!\!\!\int_{\mathcal{R}_i} h_p F \left( \partial_q - h_q h_p^{-1} \partial_p \right) \varphi \, dp \, dq \\
& = - \int\!\!\!\int_{\mathcal{R}_i} h_p F \varphi_q \, dp \, dq + \int\!\!\!\int_{R} F h_q \varphi_p \, dp \, dq  =  -\int\!\!\!\int_{\mathcal{R}_i} F_p h_q \varphi \, dp \, dq \\
& =  \int\!\!\!\int_{\Omega_i} F^\prime(-\psi) h_q h_p^{-1}  \varphi \, dy\,dx = \int\!\!\!\int_{\Omega_i} F^\prime(-\psi) \psi_x  \varphi \, dy\,dx.
\end{align*}
We have therefore shown that 
$$ \partial_x F(-\psi) = -F^\prime(-\psi) \psi_x \in L_{\textrm{per}}^r(\Omega_i).$$
Again, letting $\varphi \in \mathcal{D}_{\textrm{per}}(\Omega_i)$ be given, we compute
\begin{align*}
\int\!\!\!\int_{\Omega_i} \varphi \partial_y F(-\psi) \, dy\,dx & = \int_{-L}^L  \int_{\eta_{i-1}(x)}^{\eta_i(x)} \varphi \partial_y F(-\psi) \, dy\, dx  = - \int_{-L}^L  \int_{\eta_{i-1}(x)}^{\eta_i(x)} \partial_y \varphi  F(-\psi) \, dy\, dx \\
& = -\int\!\!\!\int_{\mathcal{R}_i} h_p F(p) \partial_y \varphi\, dp\, dq  =  -\int\!\!\!\int_{\mathcal{R}_i} F \varphi_p \, dp\, dq \\
& =  \int\!\!\!\int_{\mathcal{R}_i}  F_p \varphi \, dp\, dq  =   \int\!\!\!\int_{\Omega_i} F^\prime(-\psi) \psi_y \varphi \, dy\, dx.
\end{align*}
This identity means,
$$ \partial_y F(-\psi) = -F^\prime(-\psi) \psi_y \in L_{\textrm{per}}^r(\Omega_i).$$
Together with our last computation, this shows that $F \circ (-\psi) \in W_{\textrm{per}}^{1,r}(\Omega_i)$, for each $i$, and the chain rule \eqref{sobolevchainrule} indeed holds for $F \circ (-\psi)$.   

(ii) Now assume that we are given the stream function $\psi$ directly.  We may again consider the Dubriel-Jacotin variables, and define $h(q,p) = y +d$.  The corresponding change of variable rules are now
$$ h_q = \frac{\psi_x}{\psi_y}, \qquad h_p =- \frac{1}{\psi_y}, \qquad \partial_x = \partial_q -\frac{h_q}{h_p} \partial_p, \qquad \partial_y = \frac{1}{h_p} \partial_p.$$
It follows directly that $h \in W_{\textrm{per}}^{2,r}(\mathcal{R}_i)$.   In fact, all of the relevant computations done in part (i) hold verbatim.  So an identical proof shows that, for all $F$ as in \eqref{chain rule reg F}, $F \circ (-\psi) \in L_{\textrm{per}}^r(\Omega)$, moreover, in each layer $F \circ (-\psi) \in W_{\textrm{per}}^{1,r}(\Omega_i)$, and the chain rule \eqref{sobolevchainrule} holds.  
\end{proof}

\begin{lemma}[Equivalence] \label{equivalence lemma 1} Let $\alpha \in (0,1)$ be given and put $r := 2/(1-\alpha)$.  The following statements are equivalent.
\begin{itemize}
\item[(i)] There exists a solution $(u,v,P, \varrho, \eta, \eta_1, \ldots, \eta_{N-1})$ to the weak Euler problem, as stated in Problem \ref{weak Euler prob}.  

\item[(ii)] There exists a solution $(\psi,\eta, \eta_1, \ldots, \eta_{N-1}, Q)$ to the weak stream function problem, as stated in Problem \ref{general weak stream function prob}, for some $\beta$, and $\rho$ satisfying \eqref{rhopositive}--\eqref{stablestratification}.  

\item[(iii)] There exists a solution $(h, Q)$ to the height equation problem, as stated in Problem \ref{general weak height eq prob}, for some $B$, and $\rho$ satisfying \eqref{rhopositive}--\eqref{stablestratification}.  
\end{itemize} \label{equivalencelemma} 
\end{lemma}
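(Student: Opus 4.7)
The plan is to establish the equivalence by proving $(i)\Leftrightarrow(ii)$ and $(ii)\Leftrightarrow(iii)$, since the first is the content-carrying step and the second is essentially a bi-Lipschitz change of coordinates. Throughout, the limited regularity forces every manipulation to be carried out against test functions and reinterpreted distributionally, and the main tools will be Lemma \ref{chainrulelemma} for composing scalar functions with $-\psi$ and applying the chain rule, the Dubreil-Jacotin transformation $(x,y)\mapsto(q,p)=(x,-\psi)$, and careful integration by parts against test functions whose supports may cross the internal interfaces.

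For $(i)\Rightarrow(ii)$: Lemma \ref{chainrulelemma}(i) already produces $\psi$ with the stated regularity, with $\psi=0$ on $\{y=\eta\}$, $\psi=-p_0$ on the bed, and constant trace values $-p_i$ on each internal interface (these come from integrating the kinematic conditions \eqref{weakkinematicsurface}--\eqref{weakkinematicinterface}). The conservation of mass \eqref{weakmass} forces $\varrho$ to be constant along streamlines, so the same lemma delivers a stable, positive $\rho:[p_0,0]\to\mathbb{R}_+$ with $\varrho=\rho(-\psi)$. To derive Yih's equation, I substitute $u-c=\psi_y/\sqrt{\varrho}$ and $v=-\psi_x/\sqrt{\varrho}$ into the momentum equations, pair against a test function supported in a single layer $\Omega_i$, and move derivatives off $\varrho$ via the chain rule. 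Eliminating the pressure by subtracting $\partial_y$ of \eqref{weakmomentumx} from $\partial_x$ of \eqref{weakmomentumy} collapses to \eqref{weakyih} for some measurable $\beta$, which in turn (by integrating along a streamline-transverse direction) produces the Bernoulli head $E$ constant on streamlines. Evaluating $E$ on $\{y=\eta\}$ yields \eqref{weakbernoulli} with $Q$ as in \eqref{defQ}, and pressure continuity \eqref{pressure continuous} turns the interfacial Bernoulli jumps into \eqref{psicontpress}. For the reverse $(ii)\Rightarrow(i)$: recover $(u,v)$ from $\psi$ via \eqref{defpsi}, set $\varrho=\rho(-\psi)$, and define $P$ in each layer by the Bernoulli formula with additive constants fixed so that \eqref{psicontpress} is exactly pressure continuity at the interfaces and \eqref{weakbernoulli} is the dynamic condition at the surface; a direct distributional calculation then verifies \eqref{weakeuler}.

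For $(ii)\Leftrightarrow(iii)$: the no-stagnation condition \eqref{nostagnationpsi} makes $(x,y)\mapsto(q,p)=(x,-\psi)$ a bi-Lipschitz change of coordinates that maps each $\Omega_i$ onto $\mathcal{R}_i$. With $h(q,p):=y+d$, Lemma \ref{chainrulelemma}(ii) provides the regularity demanded in Problem \ref{general weak height eq prob}, together with $h_p=-1/\psi_y>0$. A direct substitution (reproducing formally the calculation in \cite{walsh2009stratified}) shows that in each layer \eqref{weakyih} is equivalent to \eqref{weakheighteq}, that \eqref{weakbernoulli} becomes \eqref{weakventtseleq}, that \eqref{psibedcond} becomes \eqref{weakbedcond}, and that the pressure-continuity transmission \eqref{psicontpress} becomes \eqref{weaktransmissioncond}. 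The reverse direction is symmetric: $h_p>0$ makes $(q,p)\mapsto(q,h(q,p)-d)$ a homeomorphism from $\mathcal{R}$ to $\Omega$ preserving the layer decomposition, and setting $\psi(x,y):=-p$ reverses each of the above equivalences.

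The hard part is $(i)\Rightarrow(ii)$, specifically making the distributional computations on either side of an interior interface cohere. With only $W^{1,r}$ regularity in each layer and $L^r$ globally, none of the nonlinear products in the momentum equations are \emph{a priori} defined as distributions on all of $\Omega$; they only make sense inside a layer, so any integration by parts against a test function whose support crosses an interface generates boundary terms on $\{y=\eta_i\}$. One has to verify that these terms combine with those from the adjacent layer so that the total collapses exactly to the jump constant $Q_i$; this is precisely the structural role of \eqref{pressure continuous}. A closely related subtlety is that, in extracting $\beta$, I am \emph{defining} the Bernoulli function, so I must establish its regularity from the layerwise $W^{1,r}$ information on $\rho$ and $\psi$ rather than assume it.
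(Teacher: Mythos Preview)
Your overall architecture matches the paper's, but there are two substantive divergences worth flagging.

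\textbf{On $(i)\Rightarrow(ii)$: your route to Yih's equation is different and leaves a gap.} You propose to cross-differentiate the momentum equations to eliminate $P$ and land on \eqref{weakyih}. The paper instead computes $\nabla E$ directly from \eqref{defE}, uses the momentum equations pointwise (valid since $u,v,\varrho\in W^{1,r}(\Omega_i)$, $P\in W^{1,r}(\Omega)$) to obtain
\[
\nabla E = (\Delta\psi)\nabla\psi + gy\nabla\varrho \qquad\text{in each }\Omega_i,
\]
and then shows $(u-c,v)\cdot\nabla E=0$ so that $E$ is a function of $-\psi$ alone. This order matters: it \emph{produces} $\beta$ as $-dE/d\psi$ with the claimed layer-wise $W^{1,r}$ regularity, and Yih's equation drops out by dotting the identity above with $\nabla\psi$ and dividing by $|\nabla\psi|^2>0$. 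Your cross-differentiation would yield a vorticity-type equation, but you still have to argue that the resulting quantity is a function of $\psi$ alone before you can name it $\beta(\psi)$; you assert this (``collapses to \eqref{weakyih} for some measurable $\beta$'') without explaining it. Also, your concern about test functions crossing interfaces is misplaced: Yih's equation in Problem \ref{general weak stream function prob} is only required in $\bigcup_i\Omega_i$, so everything is layer-wise, and the interfacial condition \eqref{psicontpress} is a separate (and easy) consequence of pressure continuity plus the definition of $E$.

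\textbf{On $(iii)\Rightarrow(ii)$: this is not ``symmetric'' and is in fact the longest step in the paper.} Declaring $(q,p)\mapsto(q,h(q,p)-d)$ a homeomorphism and setting $\psi(x,y):=-p$ is fine at the level of sets, but you owe the regularity $\psi\in W^{2,r}_{\mathrm{per}}(\Omega_i)$, and a bi-Lipschitz inverse does not give that automatically. The paper reconstructs $\psi$ layer by layer: for each fixed $x_0$ it solves the ODE $\psi_y(x_0,y)=-1/h_p(x_0,-\psi(x_0,y))$ via Peano's theorem (note $h_p$ is only $C^{0,\alpha}$, so uniqueness is not immediate), then invokes the implicit function theorem on $y=h(x,p)-d$ to upgrade to $\psi\in C^{1,\alpha}_{\mathrm{per}}$, and finally reads off $\psi_x=h_q/h_p$, $\psi_y=-1/h_p$ to conclude $\nabla\psi\in W^{1,r}_{\mathrm{per}}(\Omega_i)$. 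Only after this does the direct substitution you describe become legitimate. Your one-line treatment skips precisely the delicate part.
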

\begin{proof}  Suppose that (i) holds, meaning that we have a solution with the stated regularity to the Weak Euler problem.  In Lemma \ref{chainrulelemma} we argued that this allows us to take $u|_{\Omega_i} ,v|_{\Omega_i} ,\varrho|_{\Omega_i} \in  C_{\textrm{per}}^{\alpha}(\overline{\Omega_i})$.  We also know that the pseudo stream function is well-defined, as is the Dubreil-Jacotin transformation.  

First we confirm the existence of the streamline density function.  Fix a layer $\Omega_i$.  Observe that
\begin{align*} (u-c)\partial_q \varrho &= (u-c) \left(\partial_x + \frac{h_q}{h_p} \partial_p\right) \varrho = (u-c) \left(\partial_x + h_q \partial_y\right) \varrho \\
& =  \left((u-c) \partial_x + v \partial_y \right) \varrho = 0,\end{align*}
by \eqref{weakmass}.  From \eqref{nostagnation}, and the fact that $(c-u)^{-1} \in W_{\textrm{per}}^{1,r}(\Omega_i)$, it follows that $\partial_q \varrho = 0$, that is, $\varrho|_{\Omega_i}$ depends only on $p$.  Since the boundary of $\partial\Omega_i$ are streamlines, we may therefore define $\rho(p) := \varrho(x,y)$ for $p \in [p_0, 0] \setminus \{ p_1, \ldots, p_{N-1}\}$.  As $\varrho \in L^\infty(\Omega)$, we have $\rho \in L^{\infty}([p_0, 0])$.  Moreover, the change of variables gives
$$ \partial_q \rho = 0, \qquad \partial_p \rho = \frac{1}{\sqrt{\varrho}(c-u)} \partial_x \varrho. $$
The no stagnation condition \eqref{nostagnation} and positivity of $\rho$ \eqref{rhopositive}, together with the fact that $u$ and $\rho$ are in $C^\alpha(\overline{\Omega}_i)$, imply that ${[\sqrt{\varrho}(c-u)]^{-1}} \in L^\infty(\Omega_i)$.  This proves $\rho \in W^{1,r}([p_{i-1},p_i])$.

Next, letting $E$ be the quantity in \eqref{defE}, we deduce that $E$ does not depend on $q$.  Let $\mathbf{g} := (0,g)$.  Given the regularity of $u,v,\varrho, P$, it follows that $E \in W_{\textrm{per}}^{1,r}(\Omega_i)$, for each layer $\Omega_i$.  Denoting $\nabla := \nabla_{(x,y)}$, and working in $\Omega_i$, we compute
\begin{align}
 \nabla E &= \nabla P + \frac{1}{2}((u-c)^2 + v^2)  \nabla \varrho + \varrho ( -cu_x + uu_x + vv_x, -cu_y + uu_y + vv_y) \nonumber \\ 
 & \qquad + \varrho \mathbf{g} + gy \nabla \varrho \nonumber \\
 & = \left( (\sqrt{\varrho} (u-c))_y - (\sqrt{\varrho} v)_x\right) \left(-\sqrt{\varrho}v, \sqrt{\varrho}(u-c)\right) + gy \nabla \varrho, \label{nablaE}
\end{align}
in light of \eqref{weakmomentumx}--\eqref{weakmomentumy}.   Combining this with the change of variable formulas, we see that
\begin{align*} (u-c) \partial_q E &= \left(u-c, v \right) \cdot \nabla E \\
& =  \left( (\sqrt{\varrho}(u-c))_y - (\sqrt{\varrho}v)_x \right) \left\{ -\sqrt{\varrho}(u-c)v + \sqrt{\varrho} v(u-c)\right\} \\
& \qquad + {g}y(u-c,v) \cdot \nabla \varrho\\ 
& = {g}y(u-c) \partial_q \varrho = 0.  \end{align*}
Hence $E$ is independent of $q$ in each layer.  As the layers themselves have streamlines for boundaries, we may therefore introduce a function 
$$ B \in L^r([p_0, 0]) \cap W^{1,r}([p_0,p_1]) \cap \cdots \cap W^{1,r}([p_{N-1},0])$$ 
such that $E(x,y) = B(p) = B(-\psi(x,y))$,  and define $\beta \in L^r([0,|p_0|]) \cap W^{1,r}((p_{i-1}, p_i))$ by 
$$\beta(-p) + \sum_{i=1}^{N-1} \jump{B}_i \delta_{p_i}  = B^\prime(p).$$ 
Here $\delta_{p_i}$ is the Dirac $\delta$ measure centered on $p_i = -\psi|_{\{y = \eta_i(x)\}}$.  
The last preliminary step is to show that the functions $\rho\circ (-\psi)$ and $B \circ (-\psi)$ are in $W_{\textrm{per}}^{1,r}(\Omega_i)$ and obey the chain rule.  This follows directly from Lemma \ref{chainrulelemma}, taking $F = B$ and $F = \rho$.

With these facts established, we can begin proving the equivalences of the three formulations, beginning with (i) implies (ii).   From Lemma \ref{chainrulelemma}, we may introduce $\psi \in W_{\textrm{per}}^{2,r}(\Omega)$ such that \eqref{defpsi}, \eqref{nostagnationpsi}, \eqref{psisurfacecond}, and \eqref{psibedcond} hold.     We must now show that $\psi$ satisfies Yih's equation \eqref{weakyih} and the Bernoulli condition \eqref{weakbernoulli}, where $\rho$ and $\beta$ are the streamline density function and Bernoulli function whose existence we proved earlier, and $Q, Q_i$ are defined as in \eqref{defQ} and \eqref{defQi}.  The Bernoulli condition follows directly from the definitions.  To obtain Yih's equation, let us return to the computation of $\nabla E$ in \eqref{nablaE}.  Written in terms of $\psi$, this identity becomes
 $$ \nabla E = (\Delta \psi) \nabla \psi + gy \nabla \varrho, \qquad \textrm{in } \bigcup_i \Omega_i.$$
 Taking the inner product with $\nabla \psi$, this simplifies to the scalar equation 
 \be \nabla E \cdot \nabla \psi = |\nabla \psi|^2 \Delta \psi  + gy \nabla \varrho \cdot \nabla \psi. \label{yiheqcalc1} \ee

 Fix a layer $\Omega_i$. Using the fact that $E(x,y) = B(-\psi(x,y))$ in $\Omega_i$, we compute 
 $$ \nabla E = -B^\prime(-\psi) \nabla \psi, \qquad \nabla \varrho = -\rho^\prime(-\psi) \nabla\psi.$$
 Here we have made use of the chain rule, whose validity in this setting we confirmed earlier.  Inserting this into \eqref{yiheqcalc1} reveals
$$ -|\nabla \psi|^2 B^\prime(-\psi) = |\nabla \psi|^2 \left( \Delta \psi - gy \rho^\prime(-\psi) \right).$$
By the no stagnation condition, $|\nabla \psi|^2 > 0$, and hence the line above reduces to Yih's equation \eqref{weakyih} upon dividing by $|\nabla \psi|^2$ and recalling the definition of $\beta$ in \eqref{defbeta}.   This completes the proof of (i) implies (ii).

W next show (ii) implies (i).   Let $\beta \in L^r([0,|p_0|])$, $\rho \in L^{1,r}([p_0,0])$ with $\rho \in W^{1,r}([p_{i-1}, p_i])$ for each $i = 1, \ldots, N$, and  satisfying \eqref{rhopositive}--\eqref{stablestratification}.   Suppose that $(\psi,\eta, \eta_1, \ldots, \eta_{N-1}, Q, Q_1, \ldots, Q_{N-1})$ solves Problem \ref{general weak stream function prob} for this choice of Bernoulli function and streamline density function.   We can recover the Eulerian density $\varrho$ by taking $\varrho = \rho \circ (-\psi)$, which has the correct regularity by Lemma \ref{chainrulelemma}.  The velocity field in each layer is found by taking 
$$ (u-c, v) = \frac{\nabla^\perp \psi}{\sqrt{\rho(-\psi)}} \in W_{\textrm{per}}^{1,r}(\Omega_i).  $$    
Finally, to obtain the pressure we use Bernoulli's theorem:  let 
$$ P := \sum_i \frac{Q_i}{2} \mathds{1}_{\Omega_i} + P_{\textrm{atm}} - \frac{1}{2} |\nabla \psi|^2 - g \rho(-\psi) (y + d) + B(-\psi),$$
It is easy to verify that $(u,v, \varrho, P)$ thus defined satisfy the weak Euler equation in each layer $\Omega_i$; it remains only to check that the pressure is continuous in the entire fluid domain.  This is simply a consequence of the transmission boundary conditions \eqref{psicontpress}.

Next we prove that (ii) implies (iii).  But this is easy, since (i) and (ii) are equivalent, we may introduce $h$ as before, and the height equation follows from rewriting Yih's equation in the new variables $(q,p)$ (cf. \cite{walsh2009stratified}).

The last step is to show (iii) implies (ii).  Let $(h,Q, Q_1, \ldots, Q_{N-1})$ solve Problem \ref{general weak height eq prob} for some choice of $\rho$ and $B$.  We seek to reconstruct the stream function and the fluid domain.  The latter is simple.  For $x \in \mathbb{R}$, define
\be \eta(x) := h(x,0) - d.  \label{defetaequiv}  \ee
Because $h \in W^{2,r}(\mathcal{R}_N) \subset C^{1,\alpha}(\overline{\mathcal{R}_N})$, the trace is well-defined, and $\eta$ is of class ${C_{\textrm{per}}^{1,\alpha}(\mathbb{R})}$.  The fluid domain is thus $\Omega := \{(x,y) \in \mathbb{R}^2 : {-d} < y < \eta(x) \}$.

Now consider the stream function.  We will work from the top layer down.  Define $F \in C^\alpha(\overline{\mathcal{R}_N})$ by $F = h_p^{-1}$.  (Note that the regularity of $F$ here is once again a consequence of the embedding $C^\alpha(\mathbb{R}) \subset W^{1,r}(\mathbb{R})$, and the positivity of $h_p$ condition \eqref{hppositive}).  By Peano's theorem, for each $x_0 \in \mathbb{R}$, we may let $\psi(x_0, \cdot)$ be the solution of the ODE
\be \psi_y(x_0, y) = -F(x_0, -\psi(x_0,y)), \qquad \psi_y(x_0,\eta(x_0)) = 0,\label{psiode} \ee
which is guaranteed to exist for $y$ in some interval  $I(x_0) := [y_{\textrm{min}}(x_0), \eta(x_0)]$.   In fact, if $y(x_0)$ is finite for some $x_0$, then $|\psi(x_0,y)| \to \infty$ as $y \searrow y_{\textrm{min}}(x_0)$.  By \eqref{hppositive}, we know that $F$ is bounded strictly away from zero, and hence $\psi_y(x_0, \cdot)$ is strictly positive on $I(x_0)$.  From the boundary conditions, this implies that there exists some $y_{N-1}(x_0) \in I(x_0)$ such that 
$$ \psi(x_0, y_{N-1}(x_0)) = -p_{N-1}.$$  
Fix $y \in I_0(x_0)$, and differentiate the quantity $y + d - h(x,-\psi(x_0,y))$ in $y$ to find
\begin{align*} \partial_y \left[ y + d - h(x_0,-\psi(x_0,y)) \right] & =1+ h_p(x_0,-\psi(x_0,y)) \psi_y \\
& =1 - \frac{\psi_y}{F(x_0, -\psi(x_0,y))} = 0. \end{align*}

Evaluated on the free surface
$$ \left( y+d -h(x_0,-\psi(x_0,y)) \right)\Big|_{y = \eta(x_0)} = \eta(x_0) + d - h(x_0,0) = 0,$$
by the definition of $\eta$ and $\psi$.  Hence 
\be y = h(x,-\psi(x,y)) - d, \qquad \textrm{in  $\{ (x,y) \in \mathbb{R}^2 : y \in I(x)\}$,} \label{yhidentity} \ee
and, in particular,
\be y_{N-1}(x) = h(x,-\psi(x, y_{N-1}(x))) - d = h(x,p_1) - d =: \eta_{N-1}(x). \label{equivalence reconstruction eta_N-1} \ee
Thus we have reconstructed $\psi$ and the entire upper layer $\Omega_{N}$.   Notice that the periodicity of $h$ ensures that $\psi$ and $\eta_{N-1}$ are $2L$-periodic in $x$.  

Peano's theorem does not imply uniqueness of the solution $\psi(x_0,\cdot)$, so an additional argument is  required to guarantee that $\psi$ depends smoothly on $x$.  Fix $y_0$ so that $y_0 \in I(x)$ for $x$ in a sufficiently small neighborhood $\mathcal{U}$ of $x_0$ (which is permissible since $\eta$ is continuous).  Since $h_p > 0$, we may apply the Implicit Function Theorem to the equation 
$$ y_0 = h(x,p) - d, \qquad x \in \mathcal{U},$$
to obtain a unique ${C^{1,\alpha}}$-parameterization $(x,p(x))$ of all solutions near the point $(x_0, y_0)$.  In light of \eqref{yhidentity}, we see that $p(x) = - \psi(x,y_0)$.   Patching these solutions together using uniqueness gives a solution $\psi \in {C^{1,\alpha}(\Omega_N)}$ to 
\be \psi_y(x,y) = -F(x,-\psi(x,y)) = \frac{1}{h_p(x,-\psi(x,y))}. \label{psiyidentity} \ee 
Moreover, since $h_p$ is $2L$-periodic in $x$, the uniqueness of solutions implies that $\psi$ is $2L$-periodic in $x$, i.e. it is in ${C_{\textrm{per}}^{1,\alpha}(\Omega_N)}$.  Also, we see from \eqref{equivalence reconstruction eta_N-1} that $\eta_{N-1} \in {C_{\textrm{per}}^{1,\alpha}(\mathbb{R})}$. 

It remains to prove that $\psi$ solves Yih's equation \eqref{weakyih}, \eqref{weakbernoulli} in $\Omega_N$.  Note that Lemma \ref{chainrulelemma} guarantees that \eqref{psisurfacecond} and \eqref{psibedcond} are satisfied.    In \eqref{yhidentity}, the basic relationship between $\psi$ and $h$ was reestablished.  We will use this to find the change of variables formulas that allow us to transform from \eqref{weakheighteq} back to \eqref{weakyih}.  For instance, differentiating \eqref{yhidentity} in $x$ yields
$$ 0 = h_q(x,-\psi(x,y)) - h_p(x, -\psi(x,y)) \psi_x(x,y).$$
Since $h_p > 0$, we can rearrange terms to see 
\be \psi_x(x,y) = \frac{h_q(x,-\psi(x,y))}{h_p(x,-\psi(x,y))}.\label{psixidentity} \ee
By construction, $\psi \in {C_{\textrm{per}}^{1,\alpha}(\Omega_N)}$, and thus the right-hand side above is in $W_{\textrm{per}}^{1,r}(\Omega_N)$.  
On the other hand, in \eqref{psiyidentity},  the composition on the right-hand side is clearly in $W_{\textrm{per}}^{1,r}(\Omega_N)$, and hence $\psi \in W_{\textrm{per}}^{2,r}(\Omega_N)$.

Differentiating identities \eqref{psiyidentity}, \eqref{psixidentity}, we find
\begin{align*} \psi_{yy}(x,y) &= -\frac{h_{pp}(x,-\psi(x,y)) \psi_y(x,y)}{h_{p}(x, -\psi(x,y))^2} = \frac{h_{pp}(x,-\psi(x,y))}{h_p(x,-\psi(x,y))^3} = \left[\partial_p \left( -\frac{1}{2h_p^2} \right)\right] \bigg|_{(x,-\psi(x,y))} \\
\psi_{xx}(x,y) &= \frac{h_{qq}(x,-\psi(x,y)) - h_{qp}(x,-\psi(x,y))\psi_x(x,y)}{h_p(x,-\psi(x,y))} \\
& \qquad - \frac{h_q(x,-\psi(x,y)) \left[ h_{qp}(x,-\psi(x,y)) - h_{pp}(x,-\psi(x,y)) \psi_x(x,y) \right]}{h_p(x,-\psi(x,y))^2} \\
& = -\frac{h_{qq}(x,-\psi(x,y)) h_q(x,-\psi(x,y))}{h_p(x,-\psi(x,y))^2} + \frac{h_q(x,-\psi(x,y))^2h_pp(x,-\psi(x,y))}{h_p(x,-\psi(x,y))^3} \\
& \qquad \frac{h_{qq}(x,-\psi(x,y)) h_p(x,-\psi(x,y))^2 - h_q(x,-\psi(x,y)) h_{qp}(x,-\psi(x,y))}{h_p(x,-\psi(x,y))^2} \\
& = \left[ \partial_p \left(-\frac{h_q^2}{2h_p^2}\right) +\partial_q \left(\frac{h_q}{h_p} \right) \right] \bigg|_{(x,-\psi(x,y))}.  \end{align*}
Since $h$ solves \eqref{weakheighteq}, the computations above can be combined to obtain 
$$ \Delta \psi(x,y) = \left[ g(h-{d})\rho_p - B_p\right] \Big|_{(x,-\psi(x,y))} = gy\rho^\prime(-\psi(x,y)) - \beta(\psi(x,y)). $$
Here we are defining $\beta \in L^r[0,|p_0|]$ by \eqref{defB} and using \eqref{yhidentity} to equate $y$ and $h-{d}$ evaluated at $(x,-\psi(x,y))$.   Thus $\psi$ satisfies \eqref{weakyih}.

Finally, to show \eqref{weakbernoulli}, we note that by\eqref{psiyidentity} and \eqref{psixidentity}, 
$$ \frac{1+h_q(x,-\psi(x,y))^2}{h_p(x,-\psi(x,y))^2} = |\nabla \psi(x,y)|^2.$$
Thus \eqref{weakbernoulli} is an immediate consequence of \eqref{weakventtseleq}.

Up to this point we have proved that $\psi$ has the correct regularity and solves Yih's equation in the upper layer $\Omega_N$.  Repeating this procedure, we can reconstruct the lower layers $\Omega_1, \ldots, \Omega_{N-1}$, and extend $\psi$ so that $\psi \in C_{\textrm{per}}^0(\overline{\Omega})$ and $\psi|_{\Omega_i} \in W_{\textrm{per}}^{2,r}(\Omega_i)$, for each $i = 1, \ldots, N$.  The pressure condition \eqref{weakbernoulli} will, as above, be a consequence of \eqref{weaktransmissioncond}.  Lastly, observe that since $\psi \in C_{\textrm{per}}(\overline{\Omega})$, $\nabla \psi \in L_{\textrm{per}}^r(\Omega)$. Thus $\psi$ exhibits the required smoothness over the interior interfaces.   \end{proof}

\begin{lemma}[Equivalence of height equation and Ter-Krikorov] \label{equivalence ter-krikorov} Let $\rho$ be given with the regularity
\begin{align*}
 \rho &\in L^\infty([p_0, 0]) \cap W^{1,r}([p_0, p_1]) \cap \cdots \cap W^{1,r}([p_{N-1}, 0]) \end{align*}
 and such that \eqref{rhopositive}--\eqref{stablestratification} hold. Define $\mathring{h}$ according to \eqref{identity for limiting h}.  Then 
 $$  \mathring{h} \in W_{\mathrm{per}}^{1,r}([p_0, 0]) \cap W_{\mathrm{per}}^{2,r}([p_0, p_1]) \cap \cdots \cap W_{\mathrm{per}}^{2,r}([p_{N-1}, 0]). $$
 Moreover, the following statements are equivalent.  
\begin{itemize}
\item[(i)] There exists a solution $(\psi,\eta, \eta_1, \ldots, \eta_{N-1})$ to the localized stream function problem, as stated in Problem \ref{localized stream function prob}, for this choice of $\rho$. 

\item[(ii)] There exists a solution $h$ to the localized height equation problem, as stated in Problem \ref{localized height equation prob}, for this choice of  $\rho$.  

\item[(iii)] There exists a solution $w$ to the Ter-Krikorov problem, as stated in Problem \ref{ter-krikorov prob}, for the rescaled streamline density $\mathring{\rho}$ given by
$$ \mathring{\rho}(\zeta) := \rho(p(\zeta)),$$
where $\zeta \mapsto p(\zeta)$ is the inverse of $p \mapsto \mathring{h}(p) - d$. 
\end{itemize}
\end{lemma}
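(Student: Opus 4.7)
The plan has three main ingredients: establishing the regularity of $\mathring{h}$, reducing (i)$\Leftrightarrow$(ii) to Lemma \ref{equivalence lemma 1}, and carrying out a careful change-of-variables computation for (ii)$\Leftrightarrow$(iii).

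First, I would check the regularity of $\mathring{h}$. Since $\rho \in L^\infty([p_0,0])$ with $\rho > 0$ and in fact layer-wise $W^{1,r}$, the function $1/(c\sqrt{\rho})$ belongs to $L^\infty([p_0,0])$ globally and to $W^{1,r}([p_{i-1},p_i])$ on each subinterval. Integration in \eqref{identity for limiting h} immediately yields $\mathring{h} \in W^{1,\infty}([p_0,0]) \subset W^{1,r}([p_0,0])$ and $\mathring{h} \in W^{2,r}([p_{i-1},p_i])$ for each $i$. In particular $p \mapsto \mathring{h}(p)-d$ is a bi-Lipschitz homeomorphism of $[p_0,0]$ onto $[-d,0]$ which is smooth on each layer, so its rescaling $p \mapsto \mathpzc{z}(p)$ inverting to $\zeta \mapsto p(\zeta)$ has the same property. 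This fact will be needed in the change-of-variables step.

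For (i)$\Leftrightarrow$(ii), the localized problems are precisely the general stream function/height equation problems of \S\ref{stream function formulation section}--\S\ref{height eq formulation section} with $\beta$ given by \eqref{localized beta form} and constants $Q, Q_i$ given by \eqref{Q and Q_i for localized wave}. One verifies from the regularity of $\rho$ and $\mathring{h}$ above that this $\beta$ belongs to $L^r([p_0,0]) \cap W^{1,r}([p_0,p_1]) \cap \cdots \cap W^{1,r}([p_{N-1},0])$, as required in Problem \ref{general weak stream function prob} and Problem \ref{general weak height eq prob}. The equivalence then follows directly from Lemma \ref{equivalence lemma 1}, since the construction in that lemma preserves the specific form of $\beta$, $Q$, $Q_i$.

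For (ii)$\Leftrightarrow$(iii), I would perform the change of variables $(q,p) \mapsto (\xi, \zeta) := (q/d, \mathring{y}(p)/d)$ and set $w := h(q,p(\zeta))/d - \zeta - ({-}d)/d$, or equivalently $dw = h - \mathring{h}$. Since the map $p \mapsto \zeta$ is bi-Lipschitz and smooth on each layer, composition with it preserves the function spaces $W^{1,r}$ globally and $W^{2,r}$ layer-wise: $h \in W^{1,r}_{\mathrm{per}}(\mathcal{R}) \cap \bigcap_i W^{2,r}_{\mathrm{per}}(\mathcal{R}_i)$ if and only if $w \in W^{1,r}_{\mathrm{per}}(\mathcal{S}) \cap \bigcap_i W^{2,r}_{\mathrm{per}}(\mathcal{S}_i)$. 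The no-stagnation conditions \eqref{hppositive} and \eqref{no stagnation w} are equivalent by the chain rule. The derivation of the interior PDE and the boundary condition at $\{\zeta=0\}$ is the direct calculation already indicated in \S\ref{ter-krikorov formulation section}: substitute the derivative identities for $\partial_\xi, \partial_\zeta$ into the distributional form \eqref{localized weak height 2}, absorb the factor $c\sqrt{\mathring{\rho}}/d$ coming from $\partial_\zeta$, and use the definition $\lambda = gd/c^2$ to recognize the resulting expression as \eqref{ter-krikorov interior eq}. The Dirichlet condition \eqref{weakbedcond} transforms to \eqref{ter-krikorov bed cond}, and the Bernoulli-type surface condition \eqref{weakventtseleq} combined with \eqref{localized beta form}--\eqref{Q and Q_i for localized wave} reduces to \eqref{ter-krikorov conormal cond} after algebraic simplification.

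The main obstacle is verifying that \eqref{weakheighteq} together with the transmission condition \eqref{weaktransmissioncond} combines into the single distributional equation \eqref{localized weak height 2} and then transforms cleanly into \eqref{ter-krikorov interior eq} without generating spurious interfacial $\delta$-terms. This is where the careful choice of $B$ and $Q_i$ from \eqref{localized beta form} and \eqref{Q and Q_i for localized wave} is essential: the specific layer-wise constants $Q_i/2$ are exactly those needed to cancel the jumps of $-(1/2)c^2\rho - g\rho(h-\mathring{h})$ across $\{\zeta = \zeta_i\}$, so that after substitution the resulting distribution contains no interfacial contributions, and the interior PDE holds in $\mathcal{D}'(\mathcal{S})$. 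Once this cancellation is verified by testing against an arbitrary $\Psi \in \mathcal{D}_{\mathrm{per}}(\mathcal{S})$ and integrating by parts layer-by-layer, the proof of (ii)$\Leftrightarrow$(iii) is complete.
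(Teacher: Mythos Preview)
Your proposal is correct and follows exactly the same strategy as the paper: regularity of $\mathring{h}$ from the definition, (i)$\Leftrightarrow$(ii) as a direct consequence of Lemma~\ref{equivalence lemma 1}, and (ii)$\Leftrightarrow$(iii) via the change of variables from \S\ref{ter-krikorov formulation section}. The paper simply declares the regularity of $\mathring{h}$ ``obvious'' and omits the (ii)$\Leftrightarrow$(iii) computation entirely (deferring to Turner's work), so your sketch is in fact more detailed than the paper's own proof.
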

\begin{proof}  Given the regularity of $\rho$, the stated regularity of $\mathring{h}$ is obvious.  The equivalence of (i) and (ii) is implied by Lemma \ref{equivalence lemma 1}.  To show that (ii) and (iii) are equivalent is relatively straightforward, particularly compared to the previous lemma.  Moreover, this fact is implicit in the works of Turner \cite{turner1981internal,turner1984variational}, e.g.  We therefore omit the details.
\end{proof}

\noindent 
{\bf Acknowledgments.}  The authors would like to thank Miles Wheeler, Hongjie Dong, Stephen Hoffman, and Ariel Barton for helpful comments and enlightening conversations during the research leading to this work.  

\bibliographystyle{siam}
\bibliography{projectdescription}
\end{document}